\newtheorem{theorem}{Theorem}[section]
\newtheorem{lemma}[theorem]{Lemma}
\newtheorem{prop}[theorem]{Proposition}
\newtheorem{mprop}{Proposition}
\newtheorem{mtheorem}{Theorem}
\theoremstyle{definition}
\newtheorem{definition}[theorem]{Definition}
\numberwithin{equation}{section}
\newcommand{\Aut}{\operatorname{Aut}}
\newcommand{\Hom}{\operatorname{Hom}}
\newcommand{\id}{\operatorname{Id}}
\newcommand{\Syl}{\operatorname{Syl}}
\newcommand{\Inn}{\operatorname{Inn}}
\newcommand{\D}{\mathcal{D}}
\renewcommand{\Gamma}{\varGamma}
\renewcommand{\epsilon}{\varepsilon}
\renewcommand{\hat}{\widehat}
\renewcommand{\leq}{\leqslant}
\renewcommand{\geq}{\geqslant}
\newcommand{\<}{\langle}
\renewcommand{\>}{\rangle}
\newcommand{\oH}{\operatorname{H}}
\newcommand{\foc}{\mathfrak{foc}}
\newcommand{\hyp}{\mathfrak{hyp}}
\newcommand{\F}{\mathcal{F}}
\newcommand{\G}{\mathcal{G}}
\newcommand{\E}{\mathcal{E}}
\newcommand{\m}{\mathcal}
\newcommand{\C}{\mathcal{C}}
\newcommand{\X}{\mathcal{X}}
\newcommand{\Ac}{\operatorname{A}^\circ}
\renewcommand{\phi}{\varphi}
\begin{document}

%%
%% The title of the paper goes here.  Edit to your title.
%%

\title{Centralizers of normal subsystems revisited}
 
%%
%% Now edit the following to give your name and address:
%% 

\author{E. Henke}
\address{Institute of Mathematics, University of Aberdeen, U.K.}
\email{ellen.henke@abdn.ac.uk}
\thanks{This work was partially supported by EPSRC first grant EP/R010048/1}

\begin{abstract}
In this paper we revisit two concepts which were originally introduced by Aschbacher and are crucial in the theory of saturated fusion systems: Firstly, we give a new approach to defining the centra-lizer of a normal subsystem. Secondly, we revisit the construction of the product of two normal subsystems which centralize each other.  
\end{abstract}

\maketitle

\textbf{Keywords:} Fusion systems.

\bigskip

\section{Introduction}

The theory of saturated fusion systems generalizes important aspects of finite group theory, since each finite group leads to a saturated fusion system which encodes the conjugacy relations between subgroups of a fixed Sylow $p$-subgroup. Much of the theory of saturated fusion systems was developed in analogy to the theory of finite groups. Building on foundational work of Puig and many other authors, Aschbacher \cite{Aschbacher:2011} introduced fusion system analogues of group theoretical concepts which play a crucial role in the proof of the classification of finite simple groups. In particular, Aschbacher \cite[Chapter~6]{Aschbacher:2011} defined centralizers of normal subsystems. In this paper we revisit this concept. We also give a new approach to the construction of the product of two normal subsystems over commuting strongly closed subgroups; such a product was first defined by Aschbacher \cite[Chapter~5]{Aschbacher:2011}. 

\smallskip

The work presented in this paper fits into a wider program to revisit and extend the theory of fusion systems. The author intends to do this partly by working with localities. Localities are group-like structures attached to fusion systems, which were introduced by Chermak \cite{Chermak:2013}, originally in the context of the proof of the existence and uniqueness of centric linking systems. Chermak and the author are in the process of developing a local theory of localities and relating fusion-theoretic concepts to analogous concepts in localities. Results about localities can then in turn be used to prove new theorems about fusion systems and to revisit existing fusion-theoretic concepts. However, it seems that some results still need to be proved in fusion systems directly, since they are necessary as a basis for relating concepts in fusion systems to their analogues in localities. The results revisited here seem to fall into this category, as they are used by Chermak and the author in \cite{Chermak/Henke:2017} to prove a one-to-one correspondence between normal subsystems of fusion systems and partial normal subgroups of certain localities. This is the motivation for this paper. We will now describe the results we prove in more detail. 

\smallskip

\textbf{For the remainder of the introduction, let $\F$ be a saturated fusion system over a $p$-group $S$.} If $R$ is any subgroup of $S$ and $\C$ is any collection of $\F$-morphisms between subgroups of $R$, write $\<\C\>_R$ for the smallest subsystem of $\F$ over $R$ containing every morphism in $\C$. 

\smallskip

Given a normal subsystem $\E$ of $\F$, Aschbacher \cite[(6.7)(1)]{Aschbacher:2011} showed that the set of subgroups $X$ of $S$ with $\E\subseteq C_\F(X)$ has a largest member $C_S(\E)$. He furthermore constructed a normal subsystem $C_\F(\E)$ over $C_S(\E)$. In Section~\ref{S:CSE}, we revisit the construction of $C_S(\E)$ by proving the theorem we state next. While part (a) of this theorem is just a reformulation of Aschbacher's result, parts (b) and (c) appear to be new. The insight gained by proving parts (b) and (c) leads to a proof of (a) which is very different from Aschbacher's proof. Our approach is actually inspired by a result concerning localities \cite[Proposition~8.2]{Henke:2015}.

\begin{mtheorem}\label{MainCSE}
Let $\E$ be a normal subsystem of $\F$ over $T\leq S$. Set
\[\X:=\{X\leq C_S(T)\colon \E\subseteq C_\F(X)\}\mbox{ and }C_S(\m{E}):=\<\X\>.
\]
Then the following hold:
\begin{itemize}
 \item[(a)] The subgroup $C_S(\E)$ is an element of $\X$, and thus with respect to inclusion the unique largest member of $\X$. Moreover, $C_S(\E)$ is strongly closed in $\F$.
 \item[(b)] The subsystem $\G:=N_{N_\F(T)}(TC_S(T))$ is a constrained subsystem of $\F$, and $N_\E(T)$ is a normal subsystem of $\G$. Thus, there exists a model $G$ for $\G$, and a normal subgroup $N$ of $G$ which is a model for $N_\E(T)$. If we fix such $G$ and $N$ and set $R^*:=C_S(N)$, then $R^*$ is -- with respect to inclusion -- the unique largest subgroup of $S$ containing $N_\E(T)$ in its centralizer in $\F$. In particular, $R^*$ does not depend on the choice of $G$ and $N$.
 \item[(c)] If $R^*$ is as in (b), then every subgroup of $R^*$ which is weakly closed in $\F$ is an element of $\X$. The subgroup $C_S(\E)$ is both the unique largest subgroup of $R^*$ which is weakly closed in $\F$, and the unique largest subgroup of $R^*$ which is strongly closed in $\F$.
\end{itemize}
\end{mtheorem}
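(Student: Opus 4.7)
My plan is to prove (b) first; parts (a) and (c) will follow directly. To begin, I will show that $\G := N_{N_\F(T)}(TC_S(T))$ is a saturated constrained fusion system. The inclusion
\[C_{N_S(T)}(TC_S(T)) \leq C_{N_S(T)}(T) = C_S(T) \leq TC_S(T)\]
shows that $TC_S(T)$ is centric in the ambient $p$-group $N_S(T)$, and saturation of $\G$ is the standard result on normalizers of fully normalized subgroups (note $TC_S(T)$ is strongly closed in $N_\F(T)$ as a product of two strongly closed subgroups). Normality of $\E = N_\E(T)$ in $\F$ passes first to $N_\F(T)$ and then to $\G$. The model theorem then yields a finite group $G$ with $\G = \F_{N_S(T)}(G)$ together with a normal subgroup $N \trianglelefteq G$ corresponding to $\E$: $T$ is a Sylow of $N$, $N \cap N_S(T) = T$, and $\F_T(N) = \E$.

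The inclusion $\E \subseteq C_\F(R^*)$, where $R^* := C_{N_S(T)}(N)$, is the easy direction: every $\phi \in \Hom_\E(P,Q)$ has the form $c_n|_P$ for some $n \in N$ with $P,Q \leq T$, and then $c_n|_{PR^*}\colon PR^* \to QR^*$ is a $\G$-morphism (conjugation by $n \in N \leq G$) that extends $\phi$ and fixes $R^*$ pointwise, since $n$ centralizes $R^* \leq C_G(N)$. The main obstacle is the reverse inclusion: if $X \leq S$ satisfies $\E \subseteq C_\F(X)$, then $X \leq R^*$. From $T \leq C_S(X)$ one obtains $X \leq C_S(T) \leq N_S(T) \leq G$. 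For each $\phi \in \Aut_\E(T)$ the extension $\tilde\phi \in \Aut_\F(TX)$ restricting to $\phi$ on $T$ and to $\id$ on $X$ is unique, because $TX$ is generated by $T$ and $X$; the assignment $n \mapsto \tilde\phi$ thus defines a group homomorphism $N_N(T) \to \Aut_\F(TX)$. The technical heart of the argument is to realize these extensions as $G$-conjugation: using saturation of $\F$ together with $TX \leq TC_S(T)$ (the normal centric subgroup of $\G$), one shows $\tilde\phi \in \Aut_\G(TX)$, hence $\tilde\phi = c_g|_{TX}$ for some $g \in n C_G(T) \cap C_G(X)$. An Alperin-type decomposition of $N$, combined with careful control of the $C_G(T)$-ambiguity, then forces $N \leq C_G(X)$, i.e., $X \leq R^*$.

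Parts (a) and (c) follow quickly from (b). Since $R^* \in \X$ and every member of $\X$ is contained in $R^*$, we have $C_S(\E) = \langle \X \rangle = R^*$, which is therefore independent of the choice of $G$ and $N$. For (c), restricting the $\F$-extensions witnessing $\E \subseteq C_\F(R^*)$ to $PY \to QY$ for any $Y \leq R^*$ shows that in fact \emph{every} subgroup of $R^*$ belongs to $\X$; in particular every weakly (resp.\ strongly) closed subgroup of $R^*$ lies in $\X$. Strong closure of $R^*$ in $\F$ then follows from $\F$-invariance of $\E$: given $\phi \in \Hom_\F(P,Q)$ and $x \in P \cap R^*$, the condition $\E \subseteq C_\F(\langle x \rangle)$ conjugates via $\phi$ to $\E \subseteq C_\F(\langle \phi(x) \rangle)$, so $\phi(x) \in R^*$. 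Hence $C_S(\E) = R^*$ is strongly (and weakly) closed, and is therefore the unique largest such subgroup of $R^*$.
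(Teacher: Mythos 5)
Your outline founders on one decisive error: you treat $R^*=C_S(N)$ as if it were $C_S(\E)$, and the step you use to justify this is false. You write that every $\phi\in\Hom_\E(P,Q)$ has the form $c_n|_P$ for some $n\in N$; but $N$ is a model for $N_\E(T)=\F_T(N)$ only, not for $\E$. The morphisms of $\E$ realized by conjugation in $N$ are exactly those of $N_\E(T)$, which is in general a proper subsystem of $\E$ (take $\E$ to be the fusion system of any simple group with nontrivial fusion). So your ``easy direction'' only yields $N_\E(T)\subseteq C_\F(R^*)$, not $\E\subseteq C_\F(R^*)$, and consequently $R^*$ need not lie in $\X$ at all. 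This is precisely the distinction the theorem is built around: part (b) characterizes $R^*$ as the largest subgroup centralizing $N_\E(T)$, while part (c) says $C_S(\E)$ is only the largest \emph{weakly closed} subgroup of $R^*$ --- a statement that would be pointless if $C_S(\E)=R^*$ held in general. Your conclusions that $C_S(\E)=R^*$ and that \emph{every} subgroup of $R^*$ belongs to $\X$ are therefore not theorems; they overshoot the actual statement.

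The missing ingredient is the passage from ``$X$ centralizes $\Aut_\E(T)$'' to ``$X$ centralizes all of $\E$,'' and this passage genuinely requires $X$ to be weakly closed. In the paper this is Proposition~\ref{WeaklyClosedCentralized}: one argues by downward induction on $|P|$ over $P\in\E^{cr}\cap\E^f$, using the Frattini argument inside $\Aut_\E(Q)$ and the fact that a weakly closed $R\leq C_S(T)$ is invariant under $\Aut_\F(QC_S(Q))$, so that $O^{p'}(\Aut_\E(Q))=\langle\Aut_T(Q)^{\Aut_\F(QC_S(Q))}\rangle$ centralizes $R$. Combined with Lemma~\ref{XInvariant} (that $\X$ is closed under $\F$-conjugation, itself a nontrivial Alperin-type induction, not the one-line consequence of $\F$-invariance you sketch for strong closure), this gives that $\langle\X\rangle$ is weakly closed, lies in $R^*$, hence lies in $\X$. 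Two further points: the ``technical heart'' of your reverse inclusion --- reconciling the extension of $\alpha\in\Aut_\E(T)$ to $TC_S(T)$ granted by normality of $\E$ with the extension to $TX$ fixing $X$ --- is exactly Lemma~\ref{CFCG0} of the paper and is a genuine maximality argument, not a routine verification; and your uniqueness claim for $\tilde\phi$ does not by itself produce the needed extension to $TC_S(T)$ that would place $\tilde\phi$ in $\G$.
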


The next Proposition appears again to be new. Recall that the focal subgroup $\foc(\F)$ of $\F$ is defined by $\foc(\F)=\<[P,\Aut_\F(P)]\colon P\leq S\>\leq S$.

\begin{mprop}\label{FocProp}
If $\E$ is a normal subsystem of $\F$ over a subgroup $T$ of $S$, then $\foc(C_\F(T))\leq C_S(\m{E})$.
\end{mprop}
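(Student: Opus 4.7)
The plan is to apply Theorem~\ref{MainCSE}(c): since $C_S(\E)$ is characterised as the largest subgroup of $R^*$ strongly closed in $\F$, it suffices to prove (i) $\foc(C_\F(T)) \leq R^*$ and (ii) $\foc(C_\F(T))$ is strongly closed in $\F$.

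For (i), I would pass to the model. Fix $G$ and $N \trianglelefteq G$ as in part~(b), so $R^* = C_S(N)$. Because $N_\E(T)$ is constrained with $T = O_p(N)$, we have $C_N(T) = Z(T)$; the standard fact that every automorphism of a constrained finite group fixing $O_p$ pointwise is induced by conjugation by an element of $C(O_p)$, applied to the conjugation action of $C_G(T)$ on $N \trianglelefteq G$, yields
$$C_G(T) = C_G(N) \cdot Z(T),$$
with $Z(T)$ central in $C_G(T)$. Consequently $[C_G(T), C_G(T)] \leq C_G(N)$.

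Now take a generator $[x,\alpha]$ of $\foc(C_\F(T))$, with $\alpha \in \Aut_{C_\F(T)}(P)$, $P \leq C_S(T)$, and $x \in P$. By definition of $C_\F(T)$, $\alpha$ extends to $\tilde\alpha \in \Aut_\F(TP)$ fixing $T$ pointwise. If $\tilde\alpha$ can be realised as conjugation by some $g \in C_G(T)$, then
$$[x,\alpha] \;=\; [x,g] \;\in\; [C_G(T),C_G(T)] \cap C_S(T) \;\leq\; C_G(N) \cap C_S(T) \;=\; R^*.$$
When $P = C_S(T)$ this realisation is immediate: $\tilde\alpha \in \Aut_\F(TC_S(T))$ tautologically normalises $TC_S(T)$, so it belongs to $\G$ and is implemented in $G$. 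The general case is reduced to this one via Alperin's fusion theorem inside the saturated subsystem $C_\F(T)$. The main technical obstacle is treating the contributions coming from $C_\F(T)$-essential subgroups, which essentially amounts to showing $C_\F(T) = C_\G(T)$, i.e.\ that every $C_\F(T)$-morphism has an $\F$-extension normalising $TC_S(T)$.

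For (ii), the focal subgroup $\foc(C_\F(T))$ is strongly closed inside the saturated subsystem $C_\F(T)$ by general theory. To upgrade this to $\F$-strong closure, one decomposes an arbitrary $\F$-morphism on a subgroup of $\foc(C_\F(T))$, via Alperin's theorem and the normality axioms for $\E$, into morphisms preserving $T$ (which preserve $C_\F(T)$ and hence $\foc(C_\F(T))$) and $\E$-morphisms (which, by (i) combined with the defining property of $R^*$, act trivially on $\foc(C_\F(T)) \leq R^*$). Combining (i) and (ii) with Theorem~\ref{MainCSE}(c) gives $\foc(C_\F(T)) \leq C_S(\E)$, as required.
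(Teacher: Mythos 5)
Your overall strategy --- reduce to Theorem~\ref{MainCSE}(b),(c) by showing $\foc(C_\F(T))\leq R^*$ and that $\foc(C_\F(T))$ is weakly closed in $\F$ --- is legitimate in outline, and your group-theoretic identity $C_G(T)=C_G(N)Z(T)$ is correct: for $T=O_p(N)\in\Syl_p(N)$ with $C_N(T)\leq T$ one has $C_{\Aut(N)}(T)=\Aut_{Z(T)}(N)$, e.g.\ because the crossed homomorphism $n\mapsto n^\alpha n^{-1}\in Z(T)$ vanishes on $T$ and $H^1(N/T,Z(T))=0$. The genuine gap is in the reduction of a general generator $[x,\alpha]$, $\alpha\in\Aut_{C_\F(T)}(P)$, to automorphisms realised in the model $G$. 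You flag this yourself, but the statement you say it ``essentially amounts to'' --- that $C_\F(T)=C_\G(T)$, i.e.\ that every $C_\F(T)$-morphism extends to an $\F$-morphism normalising $TC_S(T)$ --- is false in general. The model $G$ only sees fusion inside $\G=N_{N_\F(T)}(TC_S(T))$; an automorphism of a $C_\F(T)$-essential subgroup $P<C_S(T)$ has no reason to extend to $TC_S(T)$, and Alperin's fusion theorem applied in $C_\F(T)$ forces you to deal with exactly these contributions. (Sanity check: for $T=1$ your intermediate claim reads $\F=N_\F(S)$.) This is not a removable technicality; it is precisely why the paper's proof does not work inside the single model $G$, but instead runs a minimal-counterexample induction, reduces via Alperin and Lemma~\ref{Wellknown} to a subgroup $Q\in\E^{cr}\cap\F^f$ that is normal in $\F$, and then builds, for each relevant pair $(Q,P)$, a separate constrained system $N_{N_\F(X)}(XC_S(X))$ with $X=QP$ together with its own model $G_X$ and normal subgroup $N_X$ (Lemma~\ref{PropHelp}), finishing with the Three Subgroup Lemma.

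Step (ii) has a related problem. That $\foc(C_\F(T))$ is strongly (or even weakly) closed in $\F$ is not ``general theory'': it is automatic only inside $C_\F(T)$, and promoting closure from $C_\F(T)$ to $\F$ is of comparable difficulty to the proposition itself (compare the effort in Lemma~\ref{XInvariant}). Moreover, the assertion that ``$\E$-morphisms act trivially on $\foc(C_\F(T))\leq R^*$ by (i)'' conflates two different things: containment in $R^*=C_S(N)$ only yields $N_\E(T)\subseteq C_\F(\cdot)$ (Lemma~\ref{FirstCharacterization}), whereas being centralised by all of $\E$ is exactly membership in $\X$ --- which is the conclusion you are trying to establish, not a tool you may invoke. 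As written, the argument is circular at this point and incomplete at the Alperin reduction in (i).
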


Observe that Proposition~\ref{FocProp} yields $\hyp(C_\F(T))\leq C_S(\E)$, where $\hyp(\F)=\<[P,O^p(\Aut_\F(P))]\colon P\leq S\>$ denotes the hyperfocal subgroup of $\F$. This fact was already shown by Semeraro \cite[Theorem~B]{Semeraro2015} using a group theoretical result of Gross \cite{Gross:1982} and Aschbacher's (relatively technical) construction of $C_S(\E)$, which we wish to avoid. Our proof of Proposition~\ref{FocProp} uses only the characterization of $C_S(\E)$ given in Theorem~\ref{MainCSE}(a) and is essentially self-contained.

\smallskip

The observation that $\hyp(C_\F(T))\leq C_S(\E)$ leads to a new construction of the normal subsystem $C_\F(\E)$, which Aschbacher defined via machinery introduced in \cite{Aschbacher:2008}, namely constricted $\F$-invariant and normal maps. We use instead that, for every normal subgroup $R$ of $S$ with $\hyp(\F)\leq R$, the subsystem
\[\F_R:=\<O^p(\Aut_\F(P))\colon P\leq R\>_R\]
is a normal subsystem of $\F$. (In its explicit form, this result can be found in \cite[Theorem~I.7.4]{Aschbacher/Kessar/Oliver:2011}; to show that $\F_R$ is saturated, the proof builds either on work of Aschbacher using a constricted $\F$-invariant map (cf. \cite[Chapter~7]{Aschbacher:2011}), or on a different approach by Broto, Castellana, Grodal, Levi and Oliver \cite[Theorem~4.3]{BCGLO2}.)

\smallskip

Applying the above stated result with $C_\F(T)$ instead of $\F$, we can conclude that, for any normal subsystem $\E$ of $\F$ over a subgroup $T$ of $S$, the subsystem
\[C_\F(\E):=\<O^p(\Aut_{C_\F(T)}(P))\colon P\leq C_S(\E)\>_{C_S(\E)}\] 
is a normal subsystem of $C_\F(T)$. Basically, Semeraro \cite[Theorem~B]{Semeraro2015} suggests already to define $C_\F(\E)$ in this way, but he does not prove any results indicating that this is a sensible definition. We show in Proposition~\ref{Coincide} that $C_\F(\E)$ as defined above coincides with the centralizer in $\F$ of $\E$ defined by Aschbacher \cite[Chapter~6]{Aschbacher:2011}. We give moreover an independent proof that $C_\F(\E)$ is a normal subsystem of $\F$ which plays the role of a centralizer of $\E$ in $\F$. Essentially, this is the content of the following theorem.

\begin{mtheorem}\label{MainCFE}
Let $\E$ be a normal subsystem of $\F$. Then the subsystem $C_\F(\E)$ is normal in $\F$. Moreover, for any saturated subsystem $\D$ of $\F$, the two subsystems $\D$ and $\E$ centralize each other if and only if $\D$ is contained in $C_\F(\E)$. 
\end{mtheorem}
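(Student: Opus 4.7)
The plan is to first prove $C_\F(\E)\lhd\F$ and then handle the characterization of centralizing pairs. I begin by observing that, by Proposition~\ref{FocProp}, $\hyp(C_\F(T))\leq\foc(C_\F(T))\leq C_S(\E)$, so the quoted result on $\F_R$ applied inside the ambient fusion system $C_\F(T)$ immediately gives that $C_\F(\E)$ is a normal subsystem of $C_\F(T)$; in particular, $C_\F(\E)$ is saturated. Combined with the strong closure of $C_S(\E)$ in $\F$ from Theorem~\ref{MainCSE}(a), normality of $C_\F(\E)$ in $\F$ reduces to two further conditions: $\F$-invariance of $C_\F(\E)$, and the Frattini-type extension axiom for $\alpha\in\Aut_{C_\F(\E)}(C_S(\E))$.

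For $\F$-invariance I would take any $\phi\colon Q\to Q'$ in $\F$ with $Q\leq C_S(\E)$; strong closure gives $Q'\leq C_S(\E)\leq C_S(T)$, so $[Q,T]=[Q',T]=1$, and for every $t\in T$ the conjugation $c_t|_Q=\id_Q$, whence $c_t^\phi=\id_{Q'}\in\Aut_S(Q')$. The saturation extension axiom of $\F$ then produces an extension $\hat\phi\colon QT\to Q'T$ of $\phi$ (after passing, if necessary, to a fully centralized representative). Conjugation by $\hat\phi$ visibly preserves the property defining $C_\F(T)$-morphisms, namely extendability to the join with $T$ with identity restriction on $T$, hence sends $O^p(\Aut_{C_\F(T)}(Q))$ into $O^p(\Aut_{C_\F(T)}(Q'))$; as these generate $C_\F(\E)$, the invariance follows.

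For the extension axiom I would invoke Theorem~\ref{MainCSE}(b) to realize $\G=N_{N_\F(T)}(TC_S(T))$ as $\F_S(G)$ for a finite model $G$, with normal subgroup $N\lhd G$ modelling $N_\E(T)$ and $C_S(\E)=C_S(N)$. Using the decomposition $\Aut_{C_\F(\E)}(C_S(\E))=O^p(\Aut_{C_\F(T)}(C_S(\E)))\cdot\Inn(C_S(\E))$ inherited from $C_\F(\E)\lhd C_\F(T)$, it suffices to lift each $\alpha_0\in O^p(\Aut_{C_\F(T)}(C_S(\E)))$ to conjugation by an element $c\in C_G(N)\cap N_G(C_S(\E))$; then $\hat\alpha_0:=c_c\in\Aut_\F(C_S(\E)C_S(C_S(\E)))$ extends $\alpha_0$, and the commutator $[\hat\alpha_0,C_S(C_S(\E))]$ lies in $C_S(\E)\cap C_S(C_S(\E))=Z(C_S(\E))$ because $c$ centralizes $N\supseteq T$ and the relevant computation takes place inside $TC_S(T)$. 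This lift inside $G$, together with the accompanying commutator bound, is the main technical obstacle I foresee.

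For the characterization of centralizing pairs, one direction is essentially formal: if $\D\subseteq C_\F(\E)$ over $D\leq C_S(\E)$, then $\D\subseteq C_\F(\E)\subseteq C_\F(T)$ forces $\D$-morphisms to fix $T$ pointwise, while Theorem~\ref{MainCSE}(a) applied to $D\leq C_S(\E)$ yields $\E\subseteq C_\F(D)$, so $\E$-morphisms fix $D$ pointwise; these are the defining conditions for $\D$ and $\E$ to centralize each other. Conversely, if $\D$ is a saturated subsystem over $D$ centralizing $\E$, then $\E\subseteq C_\F(D)$ forces $D\leq C_S(\E)$ by Theorem~\ref{MainCSE}(a), and $\D\subseteq C_\F(T)$. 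For each $P\leq D$ fully $\D$-normalized, saturation writes $\Aut_\D(P)=O^p(\Aut_\D(P))\cdot\Aut_D(P)$; the first factor lies in $O^p(\Aut_{C_\F(T)}(P))\leq\Aut_{C_\F(\E)}(P)$ by the definition of $C_\F(\E)$, and the second in $\Aut_D(P)\leq\Aut_{C_S(\E)}(P)\leq\Aut_{C_\F(\E)}(P)$ since $D\leq C_S(\E)$. An Alperin--Goldschmidt-style generation argument then upgrades these pointwise containments to $\D\subseteq C_\F(\E)$.
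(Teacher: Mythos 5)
Your treatment of the saturation of $C_\F(\E)$ (via Proposition~\ref{FocProp} and the $\F_R$-construction inside $C_\F(T)$) and of the equivalence ``$\D$ and $\E$ centralize each other if and only if $\D\subseteq C_\F(\E)$'' is correct and essentially identical to the paper's. The gaps are in the normality argument. First, for $\F$-invariance, the inference ``conjugation sends the generators $O^p(\Aut_{C_\F(T)}(Q))$ into $O^p(\Aut_{C_\F(T)}(Q^\phi))$, hence invariance follows'' is not valid as stated: a morphism of $C_\F(\E)$ is a composite of restrictions of such automorphisms of subgroups that need not lie in the domain of the conjugating map, so it cannot be conjugated factor by factor. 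You need to pass through a criterion such as Proposition~\ref{Finvariant}(c), i.e.\ verify $C_\F(\E)^\alpha=C_\F(\E)$ for all $\alpha\in\Aut_\F(C_S(\E))$ together with $\Aut_{C_\F(\E)}(P)\unlhd\Aut_\F(P)$ for fully normalized $P$; and for the latter you must also control the factor $\Aut_{C_S(\E)}(P)$ in $\Aut_{C_\F(\E)}(P)=\Aut_{C_S(\E)}(P)\,O^p(\Aut_{C_\F(T)}(P))$, which the paper handles by a Frattini argument extending elements of $N_{\Aut_\F(P)}(\Aut_{C_S(T)}(P))$ to $PC_S(T)$ and using that $C_S(\E)$ is strongly closed.

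The more serious gap is the extension axiom, which you yourself flag as unresolved. Two concrete problems. (i) Theorem~\ref{MainCSE} gives only $C_S(\E)\leq R^*=C_S(N)$, not equality: by Theorem~\ref{MainCSE}(c), $C_S(\E)$ is the largest \emph{weakly closed} subgroup of $R^*$, and the containment is proper in general, so your identification $C_S(\E)=C_S(N)$ is wrong. (ii) The claimed bound $[C_S(C_S(\E)),\hat\alpha_0]\leq C_S(\E)$ cannot be ``computed inside $TC_S(T)$'', because $C_S(C_S(\E))$ is in general not contained in $TC_S(T)$; moreover the existence of a lift $c\in C_G(N)$ realizing $\alpha_0$ is itself unproved. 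The paper's route is genuinely different here: it takes a model $G$ for $N_\F(V)$ with $V=(RT)C_S(RT)$ and $R=C_S(\E)$, and uses Proposition~\ref{FocProp} (twice) to prove $[C_S(R),O^p(C_G(T))]\leq R$, from which the required extension of a $p'$-element of $\Aut_{C_\F(\E)}(R)$ with commutator landing in $R$ follows. Without an input of this kind --- some application of $\foc(C_\F(T))\leq C_S(\E)$ to elements of $C_S(R)$, not merely the use of a centralizing model --- your commutator bound does not go through.
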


Given two subsystems $\F_1$ and $\F_2$ of $\F$ over subgroups $S_1$ and $S_2$ respectively, we say here that $\F_1$ and $\F_2$ centralize each other if $\F_i\subseteq C_\F(S_{3-i})$ for each $i=1,2$. If $\F_1$ and $\F_2$ are saturated, we show in Section~\ref{S:CentralProductBasic} that $\F_1$ and $\F_2$ centralize each other if and only if $\F$ contains a subsystem $\F_1*\F_2$ which is the central product of $\F_1$ and $\F_2$. Moreover, setting $T:=S_1S_2$, the subsystem $\F_1*\F_2$ can be explicitly constructed as the subsystem 
\[\<\psi\in\Hom_\F(P_1P_2,T)\colon P_i\leq S_i\mbox{ and }\psi|_{P_i}\in\Hom_{\F_i}(P_i,S_i)\mbox{ for }i=1,2\>_{T}.\]
If $\F_1$ and $\F_2$ are normal, then the next theorem says that this subsystem is actually normal as well.

\begin{mtheorem}\label{MainCentralProduct}
Suppose $\F_1$ and $\F_2$ are normal subsystems of $\F$ which centralize each other. Then $\F_1*\F_2$ is a normal subsystem of $\F$ and a central product of $\F_1$ and $\F_2$.
\end{mtheorem}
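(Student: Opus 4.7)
By the results of Section~\ref{S:CentralProductBasic}, $\E := \F_1 \ast \F_2$ is already a saturated subsystem of $\F$ over $T := S_1 S_2$ which realizes the central product of $\F_1$ and $\F_2$, so the second assertion of the theorem is automatic and the saturation axiom for normality is already known. The plan is to establish the remaining conditions for $\E \trianglelefteq \F$ in Aschbacher's sense: (a) $T$ is strongly closed in $\F$; (b) $\E$ is $\F$-invariant; and (c) every $\alpha \in \Aut_{\E}(T)$ extends to some $\bar\alpha \in \Aut_\F(TC_S(T))$ with $[\bar\alpha,C_S(T)] \leq Z(T)$.

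For (a), since $\F_1$ and $\F_2$ centralize each other we have $[S_1,S_2]=1$, and together with the strong closure of each $S_i$ in $\F$ this yields strong closure of $T$ by the standard argument on products of commuting strongly closed subgroups. For (b), I would check invariance on the explicit generating set for $\E$: a typical generator is $\psi \in \Hom_\F(P_1P_2,T)$ with $P_i \leq S_i$ and $\psi|_{P_i} \in \Hom_{\F_i}(P_i,S_i)$. For any $\phi \in \Hom_\F(Q,S)$ with $P_1P_2 \leq Q \leq T$, strong closure of $S_1, S_2, T$ forces $P_i^\phi \leq S_i$ and $(P_1P_2)^\phi \leq T$, and the restriction $\psi^\phi|_{P_i^\phi}$ equals $(\psi|_{P_i})^\phi$, which lies in $\F_i$ by the $\F$-invariance of $\F_i$. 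Thus $\psi^\phi$ is again a generator of $\E$.

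The main obstacle is (c). Given $\alpha \in \Aut_{\E}(T) \subseteq \Aut_\F(T)$, the restrictions $\alpha_i := \alpha|_{S_i}$ lie in $\Aut_{\F_i}(S_i)$ by strong closure of $S_i$ and invariance of $\F_i$. Applying the Frattini axiom for each $\F_i \trianglelefteq \F$ produces extensions $\bar\alpha_i \in \Aut_\F(S_iC_S(S_i))$ of $\alpha_i$ with $[\bar\alpha_i,C_S(S_i)] \leq Z(S_i) \leq Z(T)$. Since $TC_S(T) \leq S_iC_S(S_i)$ (using $S_{3-i} \leq C_S(S_i)$ and $C_S(T) \leq C_S(S_i)$), both $\bar\alpha_i$ restrict to automorphisms of $TC_S(T)$; however, neither individually extends $\alpha$, since for instance $\bar\alpha_1|_{S_2}$ differs from $\alpha_2$ by a homomorphism into $Z(S_1)$. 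The delicate step, which I expect to be the main technical difficulty, is to combine $\bar\alpha_1$ and $\bar\alpha_2$ into a single automorphism $\bar\alpha \in \Aut_\F(TC_S(T))$ that restricts to $\alpha_i$ on each $S_i$ and satisfies $[\bar\alpha,C_S(T)] \leq Z(T)$. My preferred approach is to pass to a model of a suitable normalizer subsystem around $TC_S(T)$ (available since $TC_S(T)$ is centric and normal in $S$), inside which $\bar\alpha_1$ and $\bar\alpha_2$ can be lifted to compatible group elements, and then to build $\bar\alpha$ by adjusting their product so that the incompatibility on $C_S(T)$ is absorbed by $Z(S_1)Z(S_2) \leq Z(T)$.
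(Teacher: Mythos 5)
Your outline is right in broad strokes, but two of the three steps are not actually established. The more serious gap is in (b). Checking that the \emph{generating set} of $\F_1*\F_2$ is closed under conjugation by morphisms $\phi$ defined on overgroups $Q\geq P_1P_2$ does not verify the strong invariance condition (Proposition~\ref{Finvariant}(f)): that condition quantifies over \emph{all} morphisms of $\F_1*\F_2$, in particular over restrictions of generators to subgroups $P\leq T$ that are not of the form $(P\cap S_1)(P\cap S_2)$ (e.g.\ diagonally embedded subgroups), conjugated by $\F$-morphisms $\chi$ whose domain contains $P$ but need not contain, or extend to, the product $P_1P_2$; for such $\chi$ the conjugate $\psi^\chi$ of the generator is simply undefined. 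The paper circumvents exactly this problem by invoking the weaker criterion of Proposition~\ref{Finvariant}(d): one only needs $\Aut_{\F_1*\F_2}(R\cap T)\unlhd\Aut_\F(R\cap T)$ for $R\in\F^{cr}$, and the key Lemma~\ref{RadicalIntersect} shows that for centric radical $R$ one has $R\cap T=(R\cap S_1)(R\cap S_2)$, so that $\Aut_{\F_1*\F_2}(R\cap T)$ admits the explicit description $\{\phi\in\Aut_\F(R\cap T)\colon \phi|_{R\cap S_i}\in\Aut_{\F_i}(R\cap S_i)\}$, from which normality is immediate. Without this lemma (or a substitute for it) your argument for invariance does not close.

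For (c) you correctly identify the obstruction but do not resolve it; your model-theoretic patch is only a strategy. The paper's resolution is Lemma~\ref{ZCentralize}(a) (alternatively Lemma~\ref{CFCG0}): the extension $\hat\alpha_i\in\Aut_\F(S_iC_S(S_i))$ of $\alpha_i$ can be chosen to restrict to the \emph{identity} on $S_{3-i}$. The point is that after reducing to $p'$-elements one has $[S_{3-i},\hat\alpha_i]\leq S_1\cap S_2$, that $\hat\alpha_i$ fixes $S_1\cap S_2$ pointwise because $S_1\cap S_2\leq Z(\F_i)$ (which is Lemma~\ref{L:F1F2Centralize}, and uses saturation of $\F_i$, not merely $S_1\cap S_2\leq Z(S_i)$), and hence $[S_{3-i},\hat\alpha_i]=[S_{3-i},\hat\alpha_i,\hat\alpha_i]=1$ by coprime action. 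With this normalization the composite $\hat\alpha:=(\hat\alpha_1|_{TC_S(T)})\circ(\hat\alpha_2|_{TC_S(T)})$ restricts to $\alpha_i$ on each $S_i$ and satisfies $[C_S(T),\hat\alpha]\leq S_1S_2=T$, and no passage to a model is needed. (Also note the extensions $\hat\alpha_i$ come from the extension property of $\F_i\unlhd\F$, not from the Frattini property, and the discrepancy you must absorb lies in $S_1\cap S_2$; killing it requires $S_1\cap S_2\leq Z(\F_i)$, which is the genuinely non-formal input.) Part (a) is fine and agrees with the paper.
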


The result above is similar but not identical to a theorem of Aschbacher \cite[Theorem~3]{Aschbacher:2011}. Namely, suppose we are given two normal subsystems $\F_1$ and $\F_2$ over subgroups $S_1$ and $S_2$ respectively such that $[S_1,S_2]=1$. Then Aschbacher shows that there is a normal subsystem $\F_1\F_2$ of $\F$ over $S_1S_2$. If $S_1\cap S_2\leq Z(\F_i)$ for $i=1,2$, then he proves also that $\F_1\F_2$ is the central product of $\F_1$ and $\F_2$. The assumption that $S_1\cap S_2\leq Z(\F_i)$ for $i=1,2$ turns actually out to be equivalent to our assumption that $\F_1$ and $\F_2$ centralize each other; see Proposition~\ref{NormalCentralizeEachOther}. Aschbacher's theorem gives anyway an a priori stronger result, since he constructs the product $\F_1\F_2$ also under the weaker assumption that $[S_1,S_2]=1$. However, we are only interested in proving the theorem above, since this is what is needed in the work of Chermak and the author in \cite{Chermak/Henke:2017} to show a one-to-one correspondence between normal subsystems of fusion systems and partial normal subgroups of localities. With the latter result in place, it follows from a theorem on localities \cite[Theorem~1]{Henke:2015a} that a product $\F_1\F_2$ is defined in a reasonable way for any two normal subsystems $\F_1$ and $\F_2$; see \cite[Corollary~1]{Chermak/Henke:2017}. In the case that $\F_1$ and $\F_2$ centralize each other, Theorem~\ref{MainCentralProduct} gives a nice explicit description of such a product. Such an explicit description appears to be new, as Aschbacher constructs his subsystem $\F_1\F_2$ using a constricted $\F$-invariant map.

\section{Background}

\textbf{Throughout this section let $\F$ be a saturated fusion system over a finite $p$-group $S$, and let $\E$ be a subsystem of $\F$ over $T\leq S$.}

\smallskip

In this section we summarize the most significant results about fusion systems we will need. For general background on fusion systems, in particular for the definition of a saturated fusion system, we refer the reader to \cite[Chapter~I]{Aschbacher/Kessar/Oliver:2011}. We will actually build on the definition of saturation due to Broto, Levi, Oliver \cite[Definition~1.2]{Broto/Levi/Oliver:2003a}, which is stated as \cite[Proposition~I.2.5]{Aschbacher/Kessar/Oliver:2011}. We will use these properties of saturation by referring to them as the ``Sylow axiom'' and the ``extension axiom''. 

\smallskip

In addition to the notations introduced in \cite[Chapter~I]{Aschbacher/Kessar/Oliver:2011}, we will write $\F^f$ for the set of fully $\F$-normalized subgroups of $S$. We will moreover conjugate from the right and write our group homomorphisms exponentially on the right hand side. Given groups $P,Q,R$ and homomorphisms $\phi\colon P\rightarrow Q$ and $\psi\colon Q\rightarrow R$, the map $P\rightarrow R,x\mapsto (x^\phi)^\psi$ will be denoted by $\phi\psi$ or $\phi\circ\psi$. 

\smallskip

If $P,Q\leq S$, $\phi\in\Hom_\F(P,Q)$ and $\alpha\in\Hom_\F(\<P,Q\>,S)$, then we write $\phi^\alpha$ for the morphism $(\alpha|_P)^{-1}\circ\phi\circ\alpha\in\Hom_\F(P^\alpha,Q^\alpha)$. If $\alpha\in\Hom_\F(T,S)$, then $\E^\alpha$ denotes the subsystem of $\F$ over $T^\alpha$ with $\Hom_{\m{E}^\alpha}(P^\alpha,Q^\alpha)=\{\phi^\alpha\colon \phi\in\Hom_{\m{E}}(P,Q)\}$ for all $P,Q\leq T$. 

\smallskip

Throughout this text, we will often use the following facts without reference: 
\begin{itemize}
\item If $X\leq S$ and $Y\in X^\F\cap \F^f$, then there exists $\alpha\in \Hom_\F(N_S(X),S)$ with $X^{\alpha}=Y$. In particular, for every $X\leq S$, there exists $\alpha\in\Hom_\F(N_S(X),S)$ such that $X^\alpha\in\F^f$. For a proof see \cite[Lemma~I.2.6(c)]{Aschbacher/Kessar/Oliver:2011}.
\item The normalizer of a fully normalized subgroup is saturated; see \cite[Theorem~I.5.5]{Aschbacher/Kessar/Oliver:2011}.
\end{itemize}

\smallskip

It will often be useful to work with models for constrained fusion systems. Recall that $\F$ is called \emph{constrained} if there is a normal centric subgroup of $\F$. Moreover, a finite group $G$ is called a \emph{model} for $\F$ if $S$ is contained in $G$ as a Sylow $p$-subgroup, $\F=\F_S(G)$, and $C_G(O_p(G))\leq O_p(G)$. For convenience, we summarize the relationships between constrained fusion systems and models in the following theorem.

\begin{theorem}\label{Model1}
\begin{itemize}
\item [(a)] $\F$ is constrained if and only if there exists a model for $\F$. In this case, a model is unique up to an isomorphism which is the identity on $S$.
\item [(b)] If $\F$ is constrained and $G$ is a model for $\F$, then a subgroup of $S$ is normal in $\F$ if and only if it is normal in $G$. If $Q\leq S$ is normal and centric in $\F$, then in addition $C_G(Q)\leq Q$.
\item [(c)] If $\F$ is constrained, $G$ is a model for $\F$ and $\E$ is a normal subsystem of $\F$, then there exists a unique normal subgroup of $G$ which is a model for $\E$.
\end{itemize}
\end{theorem}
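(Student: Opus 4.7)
The plan is not to reprove Theorem~\ref{Model1} from scratch, since each part is classical; rather I would assemble it from the literature on constrained fusion systems, citing \cite[Chapter~III]{Aschbacher/Kessar/Oliver:2011} for parts (a) and (b) and Aschbacher \cite[Chapter~8]{Aschbacher:2011} for part (c). Still, I would outline the underlying strategy for each piece.

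For (a), the ``if'' direction is immediate: if $G$ is a model, then $O_p(G)$ is normal in $\F=\F_S(G)$, and the defining condition $C_G(O_p(G))\leq O_p(G)$ says that $O_p(G)$ is $\F$-centric, so $\F$ is constrained. The converse and uniqueness are the hard content; I would start from a normal centric $Q\leq S$ and build $G$ as an extension of $Q$ realizing $\Aut_\F(Q)$ in the form $N_G(Q)/C_G(Q)$. Saturation forces the obstruction in $H^3(\Out_\F(Q);Z(Q))$ to vanish, and the corresponding $H^2$ rigidity yields uniqueness up to an isomorphism fixing $S$ pointwise.

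For (b), the key observation is $\Aut_G(X)=\Aut_\F(X)$ for every $X\leq S$, so invariance of a subgroup $Q\leq S$ under $\F$-automorphisms of overgroups coincides with invariance under $G$-conjugation from within $S$; combined with the fact that strong closure is the same notion on both sides, this gives the normal-subgroup correspondence. For centricity, $K:=C_G(Q)\trianglelefteq G$ has Sylow $p$-subgroup $K\cap S=C_S(Q)\leq Q$ (by $\F$-centricity of $Q$); for a $p'$-element $x\in K$ one computes $[x,O_p(G)]\leq K\cap O_p(G)=C_{O_p(G)}(Q)\leq C_S(Q)= Z(Q)$, so $x$ acts trivially on both $Z(Q)$ and $O_p(G)/Z(Q)$, hence by coprime action centralizes $O_p(G)$, putting $x\in O_p(G)\cap K$ and forcing $x=1$.

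For (c), if $\E\trianglelefteq\F$ is over $T$, then by (b) $T\trianglelefteq G$ and $\Aut_\F(T)=G/C_G(T)$; normality of $\E$ in $\F$ forces $\Aut_\E(T)\trianglelefteq\Aut_\F(T)$, so its preimage $N$ in $G$ is normal in $G$. I would then verify $N\cap S=T$ and $\F_T(N)=\E$ using $\Inn(T)\in\Syl_p(\Aut_\E(T))$ (Sylow axiom in $\E$) together with the Frattini-type extension property in the definition of a normal subsystem; uniqueness of $N$ follows by comparing images in $\Aut_G(T)$ and intersections with $C_G(T)$. The main obstacle in a self-contained treatment is the existence half of (a), which needs non-trivial cohomological input; the remaining parts are essentially bookkeeping once model existence is in hand.
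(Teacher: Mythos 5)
Your overall plan --- assembling the theorem from the literature --- is exactly what the paper does: part (a) and the first half of (b) are quoted from \cite[Theorem~III.5.10]{Aschbacher/Kessar/Oliver:2011}, and (c) from \cite[Theorem~II.7.5]{Aschbacher/Kessar/Oliver:2011} together with \cite[Lemma~1.2(a)]{MS:2012b}; the latter is needed to know that a normal subgroup $N\unlhd G$ automatically satisfies $C_N(O_p(N))\leq O_p(N)$, so that ``$N$ is a model for $\E$'' reduces to ``$T\in\Syl_p(N)$ and $\F_T(N)=\E$'' --- a point your sketch glosses over. Your coprime-action argument for $C_G(Q)\leq Q$ is correct and is a nice self-contained alternative to the citation. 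For the normal-subgroup correspondence in (b), however, ``invariance under $G$-conjugation from within $S$'' does not yield $Q\unlhd G$; the paper's route is to note that $O_p(\F)$ is normal \emph{centric}, hence $O_p(\F)\unlhd G$ by the cited theorem, so every $c_g|_{O_p(\F)}$ with $g\in G$ lies in $\Aut_\F(O_p(\F))$ and therefore preserves any subgroup normal in $\F$. Your sketch is repairable along these lines.

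The genuine gap is in your sketch of (c). First, $T$ need not be normal in $G$: part (b) applies to subgroups normal in $\F$, whereas the Sylow group $T$ of a normal subsystem is only strongly closed. Already for $\E=\F$ one has $T=S$, and $S\unlhd G$ forces $S=O_p(G)$, which fails e.g.\ for $G=S_4$ at $p=2$. Second, even when $T\unlhd G$, the full preimage $N$ of $\Aut_\E(T)\unlhd\Aut_\F(T)\cong G/C_G(T)$ contains all of $C_G(T)$, so $N\cap S\geq TC_S(T)$, which is in general strictly larger than $T$: for $S$ abelian, $G=S$, $T<S$ and $\E=\F_T(T)$ your construction returns $N=S$, so the condition $N\cap S=T$ that you propose to ``then verify'' actually fails. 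The existence statement behind \cite[Theorem~II.7.5]{Aschbacher/Kessar/Oliver:2011} is substantially harder than a preimage construction, which is why the paper, like you, ultimately just cites it --- but your outline should not suggest that it reduces to the normal-subgroup correspondence of (b).
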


\begin{proof}
If $G$ is a model for $\F$ then clearly every normal $p$-subgroup of $G$ is normal in $\F$, so in particular, $\F$ is constrained. Thus, (a) follows from \cite[Theorem~III.5.10]{Aschbacher/Kessar/Oliver:2011}. Let now $\F$ be constrained and $G$ a model for $\F$. If $Q$ is a normal centric subgroup of $\F$ then it follows again from \cite[Theorem~III.5.10]{Aschbacher/Kessar/Oliver:2011} that $Q\unlhd G$ and $C_G(Q)\leq Q$. In particular, $O_p(\F)\unlhd G$. So if $g\in G$ then $c_g|_{O_p(\F)}\in \Aut_\F(O_p(\F))$ and thus $P^g=P$ for every normal subgroup $P$ of $\F$. This shows that every normal subgroup of $\F$ is normal in $G$. So (b) holds.
By \cite[Lemma~1.2(a)]{MS:2012b}, every normal subgroup $N$ of $G$ has the property $C_N(O_p(N))\leq O_p(N)$. So $N$ is a model for $\E$ if and only if $T\in\Syl_p(N)$ and $\E=\F_T(N)$. By \cite[Theorem~II.7.5]{Aschbacher/Kessar/Oliver:2011}, there exists a unique normal subgroup $N$ of $G$ with $T\in\Syl_p(N)$ and $\E=\F_T(N)$. This proves (c).
\end{proof}

We call a set $\m{C}$ of subgroups of $S$ a \emph{conjugation family} for $\F$ if, for each $P,Q\leq S$ and each $\phi\in\Hom_\F(P,Q)$, there are subgroups $P=P_0,P_1,\dots,P_k=P^\phi$, subgroups $R_i\in\m{C}$ with $\<P_{i-1},P_i\>\leq R_i$ for $i=1,\dots,r$, and automorphisms $\phi_i\in \Aut_\F(R_i)$ such that $P_{i-1}^{\phi_i}=P_i$ for each $i=1,\dots,r$ and $\phi=(\phi_1|_{P_0})\circ (\phi_2|_{P_1})\circ\dots\circ (\phi_r|_{P_{r-1}})$.

\smallskip

We will use without further reference that, by Alperin's fusion theorem, the set $\F^{cr}\cap \F^f$ of centric radical fully normalized subgroups forms a conjugation family. Indeed, the Alperin--Goldschmidt fusion theorem \cite[Theorem~I.3.6]{Aschbacher/Kessar/Oliver:2011} gives a slightly stronger statement, but Alperin's fusion theorem will be sufficient for our purposes.

\smallskip

In the remainder of this section we will collect some background results concerning subsystems of $\F$. In particular, we will be interested in $\F$-invariant and normal subsystems of $\F$. See \cite[Definition~I.6.1]{Aschbacher/Kessar/Oliver:2011} for the definition of $\F$-invariant, weakly normal and normal subsystems. We will refer to the Frattini property and the extension property stated in this definition. 

\smallskip

Next we will state some equivalent conditions for a subsystem to be $\F$-invariant. The proposition we state is basically a slight refinement of (the relevant part of)  \cite[Proposition~I.6.4]{Aschbacher/Kessar/Oliver:2011}. Some authors use part (f) of this proposition to define $\F$-invariant subsystems.

\begin{prop}\label{Finvariant}
Suppose that $T$ is strongly closed in $\F$. Then the following conditions are equivalent:
\begin{itemize}
 \item [(a)] $\E$ is $\F$-invariant.
 \item [(b)] $\E^\alpha=\E$ for each $\alpha\in\Aut_\F(T)$, and $\Aut_\E(P)\unlhd \Aut_\F(P)$ for every $P\leq T$.
 \item [(c)] $\E^\alpha=\E$ for each $\alpha\in\Aut_\F(T)$, and $\Aut_\E(P)\unlhd \Aut_\F(P)$ for every $P\leq T$ with $P\in\F^f$.
 \item [(d)] $\E^\alpha=\E$ for each $\alpha\in\Aut_\F(T)$, and $\Aut_\E(R\cap T)\unlhd \Aut_\F(R\cap T)$ for every $R\in \F^{cr}$ with $R\cap T\in \F^f$.
 \item [(e)] $\E^\alpha=\E$ for each $\alpha\in\Aut_\F(T)$, and there exists a conjugation family $\m{C}$ for $\F$ such that $\Aut_\E(R\cap T)\unlhd \Aut_\F(R\cap T)$ for every $R\in\m{C}$.
 \item [(f)] \emph{(strong invariant condition)} For each pair of subgroups $P\leq Q\leq T$, each $\phi\in\Hom_\E(P,Q)$, and each $\psi\in\Hom_\F(Q,T)$, we have $\phi^\psi\in\Hom_\E(P^\psi,Q^\psi)$.  
\end{itemize}
\end{prop}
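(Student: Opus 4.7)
My plan is to establish the cycle of implications
\[(a)\Rightarrow(b)\Rightarrow(c)\Rightarrow(d)\Rightarrow(e)\Rightarrow(f)\Rightarrow(a).\]
Of these, $(a)\Leftrightarrow(b)\Leftrightarrow(f)$ is essentially the relevant portion of \cite[Proposition~I.6.4]{Aschbacher/Kessar/Oliver:2011}: the implication $(a)\Rightarrow(b)$ follows from the Frattini property together with the invariance axiom in the definition of $\F$-invariance, while $(f)\Rightarrow(a)$ is obtained by specializing the strong invariance condition to $\psi\in\Aut_\F(T)$ (which recovers invariance of $\E$ under $\Aut_\F(T)$) and then decomposing an arbitrary $\F$-morphism into pieces compatible with $\E$ (which recovers the Frattini decomposition). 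The implications $(b)\Rightarrow(c)\Rightarrow(d)$ are trivial weakenings of the class of subgroups on which the normality condition is tested.

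For $(d)\Rightarrow(e)$, I would exhibit the candidate conjugation family $\m{C}:=\{R\in\F^{cr}\colon R\cap T\in\F^f\}$ and verify it is indeed a conjugation family. The key observation is that each $R'\in\F^{cr}$ has an $\F$-conjugate in $\m{C}$: setting $P:=R'\cap T$, strong closure gives $T\unlhd S$, so $R'\leq N_S(P)$; a suitable $\beta\in\Hom_\F(N_S(P),S)$ with $P^\beta\in\F^f$ then sends $R'$ to a conjugate $R:=(R')^\beta\in\F^{cr}$, and a short computation using strong closure for both $\beta$ and $\beta^{-1}$ shows $R\cap T=P^\beta\in\F^f$. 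A standard Alperin-style argument, conjugating each subgroup appearing in an $\F^{cr}$-decomposition via such a $\beta$, then upgrades this representative-existence property to the full conjugation-family property for $\m{C}$.

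The main step, and what I expect to be the principal obstacle, is $(e)\Rightarrow(f)$. Given $\phi\in\Hom_\E(P,Q)$ with $P\leq Q\leq T$ and $\psi\in\Hom_\F(Q,T)$, I would decompose $\psi=\psi_1|_{Q_0}\circ\cdots\circ\psi_k|_{Q_{k-1}}$ through the conjugation family $\m{C}$, with $\psi_i\in\Aut_\F(R_i)$ and $R_i\in\m{C}$, and induct on $k$. Strong closure restricts each $\psi_i$ to $\bar\psi_i\in\Aut_\F(R_i\cap T)$, and by (e), $\Aut_\E(R_i\cap T)\unlhd\Aut_\F(R_i\cap T)$. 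The delicate part is translating this normality at the level of automorphisms of $R_i\cap T$ into the desired statement for the conjugation of an individual $\E$-morphism $\phi_{i-1}\colon P_{i-1}\to Q_{i-1}$ between subgroups of $R_i\cap T$. My intended strategy is to extend $\phi_{i-1}$ to an $\E$-automorphism of $R_i\cap T$ preserving $P_{i-1}$ by using the saturation of $\F$ on the fully normalized subgroup $R_i\cap T$, combined with the invariance $\E^\alpha=\E$ for $\alpha\in\Aut_\F(T)$ to ensure the extension remains in $\E$; one then applies normality to this extension and restricts back to $P_i$ to conclude $\phi_{i-1}^{\bar\psi_i}\in\Hom_\E(P_i,Q_i)$. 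This extension step is where the real work lies.
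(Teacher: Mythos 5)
Most of your plan tracks the paper's proof: the equivalence $(a)\Leftrightarrow(b)\Leftrightarrow(f)$ is quoted from Aschbacher--Kessar--Oliver, Proposition~I.6.4, the implications $(b)\Rightarrow(c)\Rightarrow(d)$ are trivial, and your $(d)\Rightarrow(e)$ via the family $\m{C}:=\{R\in\F^{cr}\colon R\cap T\in\F^f\}$ and the representative-existence observation is exactly the paper's argument (the paper finishes that step by citing a general recognition criterion for conjugation families from \cite{DGMP} rather than re-running an Alperin-style induction, but this is only a presentational difference).

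The genuine gap is in $(e)\Rightarrow(f)$, at precisely the step you flag as ``where the real work lies''. An $\E$-morphism $\phi_{i-1}\colon P_{i-1}\rightarrow Q_{i-1}$ between subgroups of $R_i\cap T$ does \emph{not} in general extend to an element of $\Aut_\E(R_i\cap T)$: if $\E=\F_T(H)$ and $\phi_{i-1}=c_h|_{P_{i-1}}$, there is no reason for any element of $N_H(R_i\cap T)$ to induce $c_h$ on $P_{i-1}$. The extension axiom cannot produce such an extension either --- it only extends to subgroups of $N_{\phi_{i-1}}\leq N_T(P_{i-1})$, under a fully-normalized hypothesis on the image, and never to a prescribed overgroup such as $R_i\cap T$; moreover $\E$ is not assumed saturated here, so you cannot invoke Alperin's fusion theorem inside $\E$ to reduce to automorphisms, and for the \emph{arbitrary} conjugation family of condition (e) the subgroup $R_i\cap T$ need not be fully $\F$-normalized, so the saturation axioms you want to apply there are not even available. (Note also that an extension of $\phi_{i-1}$ ``preserving $P_{i-1}$'' would force $\phi_{i-1}$ to be an automorphism of $P_{i-1}$, which it need not be.) The correct route, and the one the paper takes, is to prove $(e)\Rightarrow(a)$ instead of $(e)\Rightarrow(f)$: one shows $\F|_{\leq T}=\<\Aut_\F(T),\E\>$ by decomposing an arbitrary $\F$-morphism between subgroups of $T$ through $\m{C}$, restricting each $\psi_i$ to $\bar\psi_i\in\Aut_\F(R_i\cap T)$, and using the hypothesis $\Aut_\E(R_i\cap T)\unlhd\Aut_\F(R_i\cap T)$ together with a Frattini argument in the finite group $\Aut_\F(R_i\cap T)$ and the extension axiom (pushing up to $N_T(R_i\cap T)$ and inducting) to factor each piece as an $\E$-morphism composed with the restriction of an element of $\Aut_\F(T)$; this is the adaptation of the proof of the corresponding implication in \cite[Proposition~I.6.4]{Aschbacher/Kessar/Oliver:2011}. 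Once that factorization of $\psi$ is available, $(f)$ follows formally, since conjugating an $\E$-morphism by an $\E$-isomorphism stays in $\E$ and conjugating by $\alpha\in\Aut_\F(T)$ stays in $\E$ by the hypothesis $\E^\alpha=\E$. You should replace your extension step by this factorization argument.
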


\begin{proof}
Write $\F|_{\leq T}$ for the full subcategory of $\F$ with objects the subgroups of $T$. By \cite[Proposition~I.6.4]{Aschbacher/Kessar/Oliver:2011}, conditions (a),(b) and (f) are equivalent, and $\E$ is $\F$-invariant if and only if $\F|_{\leq T}=\<\Aut_\F(T),\E\>$ and $\E^\alpha=\E$ for each $\alpha\in\Aut_\F(T)$. Moreover, essentially the same argument as in the proof of the direction $(c)\Longrightarrow (b)$ in \cite[Proposition~I.6.4]{Aschbacher/Kessar/Oliver:2011} shows that (e) implies $\F|_{\leq T}=\<\Aut_\F(T),\E\>$; in the argument, the set of essential subgroups of $\F$ together with $S$ needs to be replaced by the arbitrary conjugation family $\m{C}$. Therefore, (e) implies (a). Clearly (b) implies (c), and (c) implies (d). So it remains to show that (d) implies (e). It suffices to argue that $\m{C}:=\{R\in\F^{cr}\colon R\cap T\in\F^f\}$ is a conjugation family. For the proof we observe that, for every $R\in\F^{cr}$, there exists $\alpha\in\Hom_\F(N_S(R\cap T),S)$ such that $(R\cap T)^{\alpha}\in\F^f$. As $R\leq N_S(R\cap T)$, the subgroup $R^\alpha\in R^\F$ is well-defined. As $\F^{cr}$ is closed under taking $\F$-conjugates, we have $R^\alpha\in\F^{cr}$. Moreover, as $T$ is strongly closed, $(R\cap T)^\alpha=R^\alpha\cap T$. So $\m{C}$ contains a representative of every $\F$-conjugacy class of centric radical subgroups. Therefore, as $\F^{cr}\cap \F^f$ is a conjugation family by Alperin's fusion theorem, it follows from \cite[Proposition~2.10]{DGMP} that $\m{C}$ is a conjugation family.
\end{proof}

As fully normalized subgroups of saturated fusion systems have particularly nice properties, the following lemma will be useful. 

\begin{lemma}\label{FfEf}
Suppose that $T$ is strongly closed. If $P\leq T$ with $P\in\F^f$ then $P\in\E^f$. 
\end{lemma}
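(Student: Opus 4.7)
The plan is to compare the normalizers of $P$ and of an arbitrary $\E$-conjugate inside $T$, using the standard property of fully $\F$-normalized subgroups recalled in the paper: namely, for any $Q\in P^\F$, there exists $\alpha\in\Hom_\F(N_S(Q),S)$ with $Q^\alpha=P$.

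First I would fix $Q\in P^\E$. Since $\E$ is a subsystem of $\F$ over $T$, we have $P^\E\subseteq P^\F$, so $Q\in P^\F$ and by the property above there is $\alpha\in\Hom_\F(N_S(Q),S)$ with $Q^\alpha=P$. The next step is to observe that $\alpha$ restricts to an injective map $N_S(Q)\to N_S(P)$: for $t\in N_S(Q)$ and $q\in Q$, functoriality of $\alpha$ on $N_S(Q)$ gives $(q^\alpha)^{t^\alpha}=(q^t)^\alpha\in Q^\alpha=P$, so $t^\alpha\in N_S(P)$.

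The key additional ingredient is the strong closure of $T$: since $\alpha$ is an $\F$-morphism and $N_S(Q)\cap T=N_T(Q)$ lies in $T$, the image $\alpha(N_T(Q))$ is contained in $T$. Combined with the previous paragraph, $\alpha$ maps $N_T(Q)$ injectively into $N_S(P)\cap T=N_T(P)$, yielding $|N_T(Q)|\leq |N_T(P)|$. Since $Q\in P^\E$ was arbitrary, this is exactly the statement $P\in\E^f$.

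No step looks like a serious obstacle here; the only subtlety is making sure one uses strong closure of $T$ at the right moment, so as to pass from control of $N_S(P)$ versus $N_S(Q)$ (coming from $P\in\F^f$) to control of the normalizers inside $T$, which is what $\E^f$ requires.
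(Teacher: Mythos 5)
Your proof is correct and uses essentially the same argument as the paper: extend a map sending $Q$ to the fully $\F$-normalized $P$ over $N_S(Q)$, and use strong closure of $T$ to conclude $N_T(Q)^\alpha\leq N_T(P)$, hence $|N_T(Q)|\leq|N_T(P)|$. The only (immaterial) difference is that you run this for an arbitrary $Q\in P^\E$, whereas the paper applies it to a single fully $\E$-normalized representative $Q$ and then transfers maximality.
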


\begin{proof}
Let $Q\in P^\E$ such that $Q\in\E^f$. Then $Q\in P^\F$. So there exists $\alpha\in\Hom_\F(N_S(Q),S)$ such that $Q^\alpha=P$. As $T$ is strongly closed, we have $N_T(Q)^\alpha\leq N_T(Q^\alpha)=N_T(P)$. Hence, as $\alpha$ is injective, $|N_T(Q)|\leq |N_T(P)|$. As $Q$ is fully $\E$-normalized, this implies that $P$ is fully $\E$-normalized.
\end{proof}

We conclude this section by stating three lemmas on normal subsystems.

\begin{lemma}\label{Wellknown}
If $\E$ is a normal subsystem of $\F$, then the set $\m{E}^{cr}$ is invariant under taking $\F$-conjugates.
\end{lemma}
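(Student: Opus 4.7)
The plan is to reduce $\F$-conjugacy of subgroups of $T$ to $\E$-conjugacy composed with an element of $\Aut_\F(T)$, and then observe that $\E^{cr}$ is preserved under both moves. Let $P\in\E^{cr}$ and $\phi\in\Hom_\F(P,S)$, and set $Q:=P^\phi$. Since $T$ is strongly closed in $\F$ (because $\E$ is normal in $\F$), we have $Q\leq T$. The set $\E^{cr}$ is clearly closed under $\E$-conjugation, since ``$\E$-centric'' and ``$\E$-radical'' are defined in terms of $\E$-conjugacy classes and the groups $\Aut_\E(-)$, both of which are transported by any $\E$-isomorphism. Moreover, $\F$-invariance of $\E$ gives $\E^\alpha=\E$ for every $\alpha\in\Aut_\F(T)$ (Proposition~\ref{Finvariant}), so each such $\alpha$ is an automorphism of the fusion system $\E$ and therefore sends $\E^{cr}$ into $\E^{cr}$.

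The main input is then the Frattini property, which is part of the definition of an $\F$-invariant subsystem in \cite[Definition~I.6.1]{Aschbacher/Kessar/Oliver:2011} and is available because $\E$ is normal in $\F$: applied to the $\F$-isomorphism $\phi\colon P\to Q$ between subgroups of $T$, it produces $\alpha\in\Aut_\F(T)$ and an $\E$-isomorphism $\phi_0\colon P^\alpha\to Q$ with $\phi=(\alpha|_P)\circ\phi_0$. Combining this with the two invariances above, $P^\alpha\in\E^{cr}$ by $\Aut_\F(T)$-invariance, and hence $Q=(P^\alpha)^{\phi_0}\in (P^\alpha)^\E\subseteq\E^{cr}$ by $\E$-conjugation invariance. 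Since $\phi$ was arbitrary, every $\F$-conjugate of $P$ lies in $\E^{cr}$, which is what was to be shown.

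The only real obstacle is that $\F$-conjugacy among subgroups of $T$ is a priori wider than $\E$-conjugacy, so without the Frattini decomposition there would be no direct way to transport $\E$-centricity across an $\F$-isomorphism. Once Frattini is invoked, everything else is routine transport of structure along isomorphisms of fusion systems, and no further saturation hypothesis on $\F$ is used beyond what is packaged into normality of $\E$.
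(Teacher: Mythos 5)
Your proof is correct and follows essentially the same route as the paper: decompose an arbitrary $\F$-morphism via the Frattini property into an $\Aut_\F(T)$-part and an $\E$-part, and observe that $\E^{cr}$ is preserved by both (the former because $\E^\alpha=\E$ for $\alpha\in\Aut_\F(T)$, the latter by transport of structure). The only cosmetic difference is the order of the two factors in the Frattini decomposition, which is immaterial since the two orderings are interchangeable using $\E^\alpha=\E$.
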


\begin{proof}
Let $T\leq S$ such that $\E$ is a subsystem of $\F$ over $T$. Let $P\in\m{E}^{cr}$ be arbitrary, and let $\phi\in\Hom_\F(P,S)$. By the Frattini property, $\phi$ can be written as  the composition of a morphism in $\Hom_\E(P,T)$ with an automorphism in $\Aut_\F(T)$. Note that every element of $\Aut_\F(T)$ induces an automorphism of $\m{E}$ and thus leaves $\m{E}^{cr}$ invariant. As $\m{E}^{cr}$ is also invariant under taking $\E$-conjugates, it follows that $P^\phi\in\m{E}^{cr}$ proving the assertion.
\end{proof}

\begin{lemma}\label{LocalNormalSubsystems}
Suppose $\m{E}$ normal in $\F$ and let $Q\in\F^f$ such that $Q\leq T$. Then $Q\in \E^f$ and the subsystems $N_\F(Q)$ and $N_\E(Q)$ are saturated. Moreover, $N_\E(Q)$ is a normal subsystem of $N_\F(Q)$.
\end{lemma}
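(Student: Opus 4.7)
The plan is to verify the three assertions in turn. Since $\E$ is normal in $\F$, the subgroup $T$ is strongly closed in $\F$. As $Q\in\F^f$ with $Q\leq T$, Lemma~\ref{FfEf} yields $Q\in\E^f$. The saturations of $N_\F(Q)$ and $N_\E(Q)$ then follow from the standard result cited in the background that the normalizer in a saturated fusion system of a fully normalized subgroup is itself saturated, applied to $\F$ and to the saturated subsystem $\E$ respectively.

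For the $N_\F(Q)$-invariance of $N_\E(Q)$, I would verify the strong invariance condition of Proposition~\ref{Finvariant}(f). First, $N_T(Q)$ is strongly closed in $N_\F(Q)$: any morphism in $N_\F(Q)$ lifts to a $Q$-preserving morphism of $\F$, and strong closure of $T$ in $\F$ forces the image of a subgroup of $N_T(Q)$ into $T\cap N_S(Q)=N_T(Q)$. Next, given $P_1\leq P_2\leq N_T(Q)$, $\phi\in\Hom_{N_\E(Q)}(P_1,P_2)$ and $\psi\in\Hom_{N_\F(Q)}(P_2,N_T(Q))$, I would fix $Q$-preserving extensions $\bar\phi\in\Hom_\E(P_1Q,P_2Q)$ and $\bar\psi\in\Hom_\F(P_2Q,N_S(Q))$. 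By the strong invariance of $\E$ in $\F$, the conjugate $\bar\phi^{\bar\psi}$ lies in $\E$; it visibly preserves $Q$, and restricts on $P_1^\psi$ to $\phi^\psi$, so $\phi^\psi\in\Hom_{N_\E(Q)}(P_1^\psi,P_2^\psi)$.

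The main obstacle will be the extension axiom, which requires that each $\alpha\in\Aut_{N_\E(Q)}(N_T(Q))$ extend to some $\tilde\alpha\in\Aut_{N_\F(Q)}(N_T(Q)\cdot C_{N_S(Q)}(N_T(Q)))$ with $[\tilde\alpha,C_{N_S(Q)}(N_T(Q))]\leq Z(N_T(Q))$. I would first observe that $C_{N_S(Q)}(N_T(Q))=C_S(N_T(Q))$, since any centralizer of $N_T(Q)$ in $S$ automatically centralizes $Q\leq N_T(Q)$; and identify $\Aut_{N_\E(Q)}(N_T(Q))$ with the $Q$-preserving elements of $\Aut_\E(N_T(Q))$. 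The strategy is to transport $N_T(Q)$ via a suitable $\F$-morphism to a fully $\F$-normalized (hence, by Lemma~\ref{FfEf}, fully $\E$-normalized) $\F$-conjugate, apply the extension axiom of $\E\unlhd\F$ in its form for fully $\E$-normalized subgroups of $T$ (derivable from the axiom for $T$ by standard saturation arguments) to the transported automorphism, and pull back. The delicate points are choosing the transport morphism so that it carries $Q$ to a subgroup whose preservation forces the pulled-back extension to lie in $N_\F(Q)$, and tracking the commutator bound through the pullback, which uses that the extension axiom places the commutator with $C_S$ of the conjugate into $Z$ of the conjugate, and this is carried back to $Z(N_T(Q))$ by the transport.
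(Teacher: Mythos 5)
Your handling of the first two assertions is correct and matches the paper: saturation of $N_\F(Q)$ is immediate from $Q\in\F^f$, saturation of $N_\E(Q)$ follows from $Q\in\E^f$ via Lemma~\ref{FfEf}, and the $N_\F(Q)$-invariance of $N_\E(Q)$ is exactly the strong invariance check via Proposition~\ref{Finvariant}(f) that the paper performs. The problem is your third step, which you correctly identify as the main obstacle but do not actually overcome. You propose to transport $N_T(Q)$ to a fully normalized conjugate and there apply ``the extension axiom of $\E\unlhd\F$ in its form for fully $\E$-normalized subgroups of $T$, derivable from the axiom for $T$ by standard saturation arguments.'' No such derivation exists. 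The extension property in \cite[Definition~I.6.1]{Aschbacher/Kessar/Oliver:2011} concerns only $\Aut_\E(T)$, and its analogue at a proper subgroup $R<T$ is false in general: already for $\E=\F_T(N)$ with $N\unlhd G$, an automorphism $c_n|_R$ with $n\in N_N(R)$ of $p'$-order satisfies only $[C_S(R),n]\leq C_N(R)$, which need not lie in $Z(R)$ (and $c_n$ need not even stabilize $C_S(R)$) unless $R$ is essentially self-centralizing in $N$. So the statement you want to transport and pull back is not available from the axiom for $T$ by routine means.

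What makes the extension property work here is a pair of facts your sketch omits entirely. First, $N_T(Q)$ is $\E$-centric, which uses $Q\in\E^f$; this is \cite[Lemma~6.5]{Aschbacher:2008}. Second, for every $\E$-centric $R$ and every $\alpha\in\Aut_\E(R)$, $\alpha$ extends to $\hat\alpha\in\Aut_\F(RC_S(R))$ with $[C_S(R),\hat\alpha]\leq Z(R)$; this is \cite[Lemma~6.10(3)]{Aschbacher:2008}, a substantive theorem whose proof passes through models for constrained local subsystems of the form $N_{N_\F(R)}(RC_S(R))$ -- compare Lemma~\ref{PropHelp} in this paper, which carries out an argument of exactly that type and is not short. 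Once one has this extension at $R=N_T(Q)$ itself, no transport is needed: since $\hat\alpha$ restricts on $N_T(Q)$ to the $Q$-preserving $\alpha$, it preserves $Q$ and is therefore a morphism of $N_\F(Q)$, and $C_{N_S(Q)}(N_T(Q))=C_S(N_T(Q))$, so the commutator bound is exactly the one required. The ``delicate points'' you flag are downstream of this missing ingredient; as written, your step three is not a proof.
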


\begin{proof}
 As $Q\in\F^f$, $N_\F(Q)$ is saturated. By Lemma~\ref{FfEf} or by \cite[Lemma~3.4(5)]{Aschbacher:2008}, $Q\in\E^f$ and thus $N_\E(Q)$ is saturated as well. Clearly, $N_\E(Q)$ is a subsystem of $N_\F(Q)$. Moreover, using the characterization of $\F$-invariant subsystems given in Proposition~\ref{Finvariant}(f), one observes that $N_\E(Q)$ is $N_\F(Q)$-invariant. By \cite[Lemma~6.5]{Aschbacher:2008}, $N_T(Q)\in \E^c$. Moreover, by \cite[Lemma~6.10(3)]{Aschbacher:2008} (using \cite[Notation~6.1]{Aschbacher:2008}), for every $R\in\E^c$ and every $\alpha\in\Aut_\E(R)$, $\alpha$ extends to some $\hat{\alpha}\in\Aut_\F(RC_S(R))$ such that $[C_S(R),\hat{\alpha}]\leq Z(R)$. In particular, this is true for $R=N_T(Q)$. This shows that $N_\E(Q)$ is a normal subsystem of $N_\F(Q)$.
\end{proof}

The following technical lemma is only used in the proof of Proposition~\ref{FocProp}.

\begin{lemma}\label{PropHelp}
 Let $\E$ be a normal subsystem of $\F$ over $T\leq S$. Let $X\in\F^f$ such that $X\cap T\in\F^f\cap \E^c$ and $X\leq (X\cap T)C_S(T)$. Then $N_{N_\F(X)}(XC_S(X))$ is a constrained saturated fusion system, and $N_\E(X\cap T)$ is a normal subsystem of $N_{N_\F(X)}(XC_S(X))$.
\end{lemma}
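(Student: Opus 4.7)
The plan has two parts, one routine and one technical. For the first assertion that $\G := N_{N_\F(X)}(XC_S(X))$ is a saturated constrained fusion system, the key observation is that $XC_S(X)$ is strongly closed in $N_\F(X)$: any morphism of $N_\F(X)$ preserves $X$ and carries elements centralizing $X$ to elements centralizing $X$. Strong closure makes $XC_S(X)$ trivially fully $N_\F(X)$-normalized, so $\G$ is saturated. The Sylow of $\G$ is $N_S(X)$ (as $N_S(X)$ normalizes $XC_S(X)$), and $C_{N_S(X)}(XC_S(X))\leq C_S(X)\leq XC_S(X)$, so $XC_S(X)$ is normal and centric in $\G$; hence $\G$ is constrained.

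For the second assertion, set $Y:=X\cap T$. Lemma~\ref{LocalNormalSubsystems} already supplies $N_\E(Y)\unlhd N_\F(Y)$, giving a rich source of extensions to work with. The plan is to verify the four conditions defining normality of $N_\E(Y)$ in $\G$. Containment $N_\E(Y)\subseteq\G$ is the first step: the inclusion $N_T(Y)\leq N_S(X)$ on Sylow subgroups follows from the decomposition $X=Y\cdot(X\cap C_S(T))$ afforded by $X\leq YC_S(T)$ together with the fact that elements of $T$ centralize $C_S(T)$; to realise a morphism $\phi$ of $N_\E(Y)$ inside $\G$, I would apply the extension property of $\E\unlhd\F$ to $\phi|_Y\in\Aut_\E(Y)$ to obtain $\hat\phi_Y\in\Aut_\F(YC_S(Y))$ with $[\hat\phi_Y,C_S(Y)]\leq Z(Y)\leq Y\leq X$, check that $\hat\phi_Y$ preserves both $X$ and $XC_S(X)$ (which both lie in $YC_S(Y)$), and glue $\phi$ with $\hat\phi_Y$ along $Y$ (the overlap $P_1\cap C_S(Y)$ collapses to $Z(Y)\leq Y$ because $Y\in\E^c$). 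Strong closure of $N_T(Y)$ in $\G$ is then immediate, since $\G$-morphisms preserve $X$ and $T$ hence $Y$ and $N_T(Y)$; and $\G$-invariance of $N_\E(Y)$ follows from the inclusion $\G\subseteq N_\F(Y)$ combined with the strong invariance criterion of Proposition~\ref{Finvariant}(f) applied to the $N_\F(Y)$-invariance already in hand.

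The main obstacle will be the extension axiom for $\Aut_{N_\E(Y)}(N_T(Y))$ inside $\G$. My plan is to start from the extension $\tilde\alpha_0\in\Aut_\F(N_T(Y)C_S(N_T(Y)))$ provided by applying \cite[Lemma~6.10(3)]{Aschbacher:2008} to $R=N_T(Y)\in\E^c$, check that $\tilde\alpha_0$ preserves $X$ (using $X\leq N_T(Y)C_S(N_T(Y))$ and $Z(N_T(Y))\leq Y\leq X$), and restrict it to an automorphism of $D:=N_T(Y)C_{N_S(X)}(N_T(Y))$ with $[\tilde\alpha_0|_D,C_{N_S(X)}(N_T(Y))]\leq Z(N_T(Y))$. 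To promote $\tilde\alpha_0|_D$ into a $\G$-morphism, I would extend it further within $N_\F(X)$ so that it preserves $XC_S(X)$, carrying out the extension on the enveloping subgroup $N_T(Y)C_S(Y)\supseteq D\cdot XC_S(X)$ by gluing $\tilde\alpha_0|_D$ with the extension $\hat\alpha_Y\in\Aut_\F(YC_S(Y))$ of $\alpha|_Y$. The subtlety is that $\tilde\alpha_0$ and $\hat\alpha_Y$ both extend $\alpha|_Y$ on $Y$ but may disagree on the larger overlap by an element of $Z(N_T(Y))\leq Z(Y)$; absorbing this discrepancy using the commutator conditions, possibly by adjusting $\tilde\alpha_0$ within its natural $Z(N_T(Y))$-ambiguity to force agreement on the pasting region, will be the heart of the argument.
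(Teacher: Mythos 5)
Your first paragraph is essentially the paper's argument and is fine: $XC_S(X)$ is weakly (indeed strongly) closed in $N_\F(X)$, hence fully $N_\F(X)$-normalized, so $N_{N_\F(X)}(XC_S(X))$ is saturated, and it is constrained because $XC_S(X)$ is a normal centric subgroup of it.

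The second half has a genuine gap. Both your containment argument and your verification of the extension property rest on ``gluing'' a morphism $\phi$ of $N_\E(Y)$ with an extension $\hat\phi_Y\in\Aut_\F(YC_S(Y))$ of $\phi|_Y$ along their overlap. Abstract saturated fusion systems have no amalgamation property: even when two morphisms agree on the intersection of their domains, nothing in the axioms produces a common extension to the join. Checking that the overlap collapses to $Z(Y)$, or adjusting $\tilde\alpha_0$ within its $Z(N_T(Y))$-ambiguity, addresses compatibility of the two maps, not the existence of a glued one -- and existence is exactly what is at stake, since to place $\phi\in\Hom_{N_\E(Y)}(P_1,P_2)$ inside $\G$ you must exhibit a single $\F$-morphism defined on $P_1XC_S(X)$ restricting to $\phi$ and preserving $X$ and $XC_S(X)$.

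The paper manufactures such morphisms by passing to a model, which is the standard way around this obstruction. Since $Q:=X\cap T\in\F^f\cap\E^c$, Aschbacher's result makes $N_\E(Q)$ normal in the constrained system $\D(Q)=N_{N_\F(Q)}(QC_S(Q))$; one takes a model $G_Q$ for $\D(Q)$ with a normal subgroup $N_Q$ modelling $N_\E(Q)$, observes $XC_S(X)\leq QC_S(Q)$, and computes
\[
[XC_S(X),N_Q]\leq [QC_S(Q),N_Q]\leq (QC_S(Q))\cap N_Q=QC_T(Q)=Q\leq X.
\]
Every morphism of $N_\E(Q)$ is then conjugation by some $n\in N_Q$, which is globally defined and normalizes $X$ and $XC_S(X)$; this gives the containment $N_\E(Q)\subseteq\G$ at once, and the same commutator bound $[C_{N_S(X)}(N_T(Q)),N_Q]\leq Q$ yields the extension property for the pair $(N_\E(Q),\G)$. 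Unless you are willing to work in a linking system or otherwise justify the amalgamation, your plan cannot be completed as written; replacing the gluing steps by this model argument is the missing ingredient.
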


\begin{proof}
Set $\F_X:=N_{N_\F(X)}(XC_S(X))$. As $X\in\F^f$, $N_\F(X)$ is saturated. Since $XC_S(X)$ is weakly closed in $N_\F(X)$ and thus fully $N_\F(X)$-normalized, it follows that $\F_X$ is saturated. Clearly $\F_X$ is constrained because $XC_S(X)$ is a normal centric subgroup of $\F_X$.

\smallskip

Set $Q:=X\cap T$. By assumption, $Q\in\F^f\cap \E^c$. So by \cite[(6.10)(2)]{Aschbacher:2008}, $\E(Q):=N_\E(Q)$ is a normal subsystem of the saturated and constrained fusion system $\D(Q):=N_{N_\F(Q)}(QC_S(Q))$. In particular $\E(Q)$ is saturated. By Theorem~\ref{Model1}(a),(c), there exists a model $G_Q$ for $\D(Q)$ and a normal subgroup $N_Q$ of $G_Q$ with $N_S(Q)\cap N_Q=N_T(Q)$ and $\F_{N_T(Q)}(N_Q)=\E(Q)$. Notice that $C_S(X)\leq C_S(Q)$ and by assumption, $X\leq QC_S(T)\leq QC_S(Q)$. Hence $XC_S(X)\leq QC_S(Q)$. By Theorem~\ref{Model1}(b), $QC_S(Q)$ is normal in $G_Q$. Notice also that $C_T(Q)\leq Q$ as $Q\in\E^c$. So it follows
\begin{eqnarray*}
[XC_S(X),N_Q]&\leq& [QC_S(Q),N_Q]\leq (QC_S(Q))\cap N_Q\\
&=&Q(C_S(Q)\cap N_Q)=QC_T(Q)=Q\leq X\leq XC_S(X).
\end{eqnarray*}
In particular, $N_Q$ normalizes $X$ and $XC_S(X)$. As $\E(Q)=\F_{N_T(Q)}(N_Q)$, this implies that $\E(Q)$ is contained in $\F_X$. Notice also that $Q=X\cap T$ is normal in $\F_X$. Since $\E$ is $\F$-invariant, it follows thus from the characterization of $\F$-invariant subsystems given in Proposition~\ref{Finvariant}(f) that $\m{E}(Q)$ is $\F_X$-invariant. As $\E(Q)$ is saturated, it remains only to prove the extension property for normal subsystems for the pair $(\E(Q),\F_X)$. 

\smallskip

Notice that $\F_X$ is a fusion system over $S_X:=N_S(X)$, and $\E(Q)$ is a fusion system over $T_Q:=N_T(Q)$. We have $[C_{S_X}(T_Q),N_Q]\leq [C_S(Q),N_Q]\leq C_S(Q)\cap N_Q=C_T(Q)\leq Q\leq T_Q$. Hence, any element $\alpha\in \Aut_{\E(Q)}(T_Q)=\Aut_{N_Q}(T_Q)$ extends to an element $\hat{\alpha}\in\Aut_{\F_X}(T_QC_{S_X}(T_Q))$ with $[C_{S_X}(T_Q),\hat{\alpha}]\leq T_Q$. This completes the proof that $\E(Q)=N_\E(Q)$ is normal in $\F_X$. 
\end{proof}

\section{Central products}\label{S:CentralProductBasic}

In this section we show that two saturated subsystems of a fusion system $\F$ centralize each other (in a certain sense) if and only if $\F$ contains a central product of these two subsystems. Here a central product is roughly speaking a certain homomorphic image of a direct product of fusion systems. To make this more precise, we start by recalling some basic definitions:

\smallskip

Let $\F$ and $\F'$ be fusion systems over $S$ and $S'$ respectively. We say that a group homomorphism $\alpha\colon S\rightarrow S'$ \emph{induces a morphism} from $\F$ to $\F'$ if, for each $\phi\in\Hom_\F(P,Q)$, there exists $\psi\in\Hom_{\F'}(P^\alpha,Q^\alpha)$ such that $(\alpha|_P)\circ\psi=\phi\circ(\alpha|_Q)$. For each $\phi$, the morphism $\psi$ is then uniquely determined. So if $\alpha$ induces a morphism from $\F$ to $\F'$, then $\alpha$ induces a map 
\[\alpha_{P,Q}\colon\Hom_\F(P,Q)\rightarrow \Hom_{\F'}(P^\alpha,Q^\alpha).\] 
Together with the map $P\mapsto P^\alpha$ from the set of objects of $\F$ to the set of objects of $\F'$, this gives a functor from $\F$ to $\F'$. If $\E$ is a subsystem of $\F$ over $T\leq S$, then we denote by $\E^\alpha$ the subsystem of $\F'$ over $T^\alpha$ which is the image of $\E$ under this functor. We say that $\alpha$ \emph{induces an epimorphism} from $\F$ to $\F'$ if $\F^\alpha=\F'$, i.e. if $\alpha$ is surjective and the induced map $\alpha_{P,Q}\colon \Hom_\F(P,Q)\rightarrow \Hom_{\F'}(P^\alpha,Q^\alpha)$ is surjective for all $P,Q\leq S$. If $\alpha\colon S\rightarrow S'$ induces a morphism from $\F$ to $\F'$, then the kernel $\ker(\alpha)$ is always strongly closed in $S$ with respect to $\F$. 

\smallskip

We now turn attention to direct products. Let $\F_i$ be a fusion system over $S_i$ for $i=1,2$. For each $i=1,2$ write $\pi_i\colon S_1\times S_2\rightarrow S_i,(s_1,s_2)\mapsto s_i$ for the projection map. 
Given $P_i,Q_i\leq S_i$ and $\phi_i\in\Hom_{\F_i}(P_i,Q_i)$ for each $i=1,2$, define an injective group homomorphism $\phi_1\times \phi_2\colon P_1\times P_2\rightarrow Q_1\times Q_2$ by
\[(x_1,x_2)^{(\phi_1\times \phi_2)}=(x_1^{\phi_1},x_2^{\phi_2})\]
for all $x_1\in P_1$ and $x_2\in P_2$. 

\smallskip

The \emph{direct product} $\F_1\times \F_2$ is the fusion system over $S_1\times S_2$ which is generated by the maps of the form $\phi_1\times \phi_2$ with $P_i,Q_i\leq S_i$ and $\phi_i\in\Hom_{\F_i}(P_i,Q_i)$ for $i=1,2$. By \cite[Theorem~I.6.6]{Aschbacher/Kessar/Oliver:2011}, every morphism in $\Hom_{\F_1\times \F_2}(P,Q)$ is of the form $(\phi_1\times\phi_2)|_P$ where $\phi_i\in \Hom_{\F_i}(P^{\pi_i},Q^{\pi_i})$ for $i=1,2$, and $\F_1\times \F_2$ is saturated if $\F_1$ and $\F_2$ are saturated.

\smallskip

Note that $\F_1$ and $\F_2$ can be in a natural way identified with subsystems of $\F_1\times \F_2$. To make this more precise, define $\iota_1\colon S_1\rightarrow S_1\times S_2$ by $s\mapsto (s,1)$, and $\iota_2\colon S_2\rightarrow S_1\times S_2$ by $s\mapsto (1,s)$. Then $\iota_i$ induces a morphism from $\F_i$ to $\F_1\times \F_2$ for $i=1,2$. We call the image $\F_i^{\iota_i}$ the canonical image of $\F_i$ in $\F_1\times \F_2$ and denote it by $\hat{\F}_i$. Set moreover  $\hat{S}_i=S^{\iota_i}$ for $i=1,2$. 

\smallskip

We are now in a position to state the main definition of this subsection. For a more detailed exposition on direct and central products of fusion systems we refer the reader to \cite[Sections~2.3 and 2.4]{Henke:2017}.

\begin{definition}
Suppose that $\F_1$ and $\F_2$ are subsystems of a fusion system $\F$ over $S$. Let $\F_i$ be a subsystem over $S_i\leq S$ for $i=1,2$.
\begin{itemize}
\item We say that $\F$ is the \emph{(internal) central product} of $\F_1$ and $\F_2$, if $S_1\cap S_2\leq Z(\F_i)$ for $i=1,2$, and the map 
\[\alpha\colon S_1\times S_2\rightarrow S,\;(x_1,x_2)\mapsto x_1x_2\] 
induces an epimorphism from $\F_1\times\F_2$ to $\F$ with $\hat{\F}_i^{\alpha}=\F_i$ for $i=1,2$. 

\smallskip

\noindent (The reader might want to note that $\alpha$ is a surjective homomorphism of groups if and only if $S=S_1S_2$ and $[S_1,S_2]=1$.)

\item We say that \emph{$\F_1$ centralizes $\F_2$}, or that \emph{$\F_1$ and $\F_2$ centralize each other}, if $\F_i\subseteq C_\F(S_{3-i})$ for $i=1,2$.
\item If $\F_1$ and $\F_2$ centralize each other, then define $\F_1*\F_2$ to be the subsystem of $\F$ over $S_1S_2$ generated by all morphisms $\psi\in\Hom_\F(P_1P_2,S_1S_2)$ with $P_i\leq S_i$ and $\psi|_{P_i}\in\Hom_{\F_i}(P_i,S_i)$ for $i=1,2$. 

\smallskip

\noindent (If $\F_1$ and $\F_2$ centralize each other, notice that $[S_1,S_2]=1$ and so $S_1S_2$ is a subgroup of $S$.)
\end{itemize}
\end{definition}

\begin{lemma}\label{L:F1F2Centralize}
Let $\F$ be a fusion system, and suppose $\F_i$ is a saturated subsystems of $\F$ over $S_i$ for each $i=1,2$. If $i\in\{1,2\}$ such that $\F_i\subseteq C_\F(S_{3-i})$, then $S_1\cap S_2\leq Z(\F_i)$.
\end{lemma}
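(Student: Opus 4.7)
The plan is to unwind the definitions directly; no deep machinery seems necessary. By symmetry I may assume $i=1$, so the hypothesis reads $\F_1 \subseteq C_\F(S_2)$. Reading this on the underlying $p$-groups gives $S_1 \leq C_S(S_2)$, hence $[S_1,S_2]=1$ and in particular $S_1\cap S_2 \leq Z(S_1)$. This disposes of the ``centrality in $S_1$'' half of the membership condition for $Z(\F_1)$.

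To finish, I would use the standard characterization: since $\F_1$ is saturated,
\[ Z(\F_1) = \{x \in Z(S_1) \colon x^\phi = x \text{ for every } \phi \in \Hom_{\F_1}(P,S_1) \text{ with } x \in P \leq S_1\}. \]
So it suffices to fix $x \in S_1\cap S_2$ and an arbitrary $\phi \in \Hom_{\F_1}(P,S_1)$ with $x \in P$, and prove $x^\phi = x$.

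This is where the centralizer condition does the real work. Since $\F_1 \subseteq C_\F(S_2)$, the morphism $\phi$ is in particular a morphism in $C_\F(S_2)$. By the defining extension property of the centralizer fusion system (as in \cite[I.5.3]{Aschbacher/Kessar/Oliver:2011}), there is then $\hat\phi \in \Hom_\F(PS_2, S_1S_2)$ with $\hat\phi|_P = \phi$ and $\hat\phi|_{S_2} = \id_{S_2}$. Since $x \in P \cap S_2$, the element $x^{\hat\phi}$ can be computed two ways: the first restriction gives $x^{\hat\phi}=x^\phi$, while the second gives $x^{\hat\phi}=x$. Therefore $x^\phi = x$, as required.

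I do not anticipate a substantive obstacle: the argument is essentially a one-line application of the defining extension property of $C_\F(S_2)$-morphisms, once the preliminary observation $S_1\cap S_2 \leq Z(S_1)$ has been recorded. Notably, no saturation of $C_\F(S_2)$ and no ``fully centralized'' hypothesis on $S_2$ is needed, because the extension $\hat\phi$ is built into the definition of each individual $C_\F(S_2)$-morphism.
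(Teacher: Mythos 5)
Your proof is correct. The core mechanism is the same as the paper's -- the extension $\hat\phi$ with $\hat\phi|_{S_2}=\id_{S_2}$ built into the definition of a $C_\F(S_2)$-morphism forces $\phi$ to fix $P\cap S_2\supseteq S_1\cap S_2$ pointwise -- but you deploy it differently. The paper first invokes Alperin's fusion theorem to reduce to $\Aut_{\F_i}(R)$ for $R\in\F_i^{cr}$, notes $S_1\cap S_2\leq Z(S_i)\leq R$, and applies the extension property only to such automorphisms; you apply it directly to an arbitrary morphism $\phi\in\Hom_{\F_1}(P,S_1)$ whose domain contains the given element $x$, which makes the reduction to centric radicals (and hence the use of Alperin) unnecessary. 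The trade-off is essentially where saturation enters: in the paper it is needed to run Alperin's fusion theorem, whereas in your version it is needed only to identify $Z(\F_1)$ with the set of elements fixed by all $\F_1$-morphisms (if one takes the ``largest subgroup $X$ with $C_{\F_1}(X)=\F_1$'' definition of the center; with the fixed-point definition your argument needs no saturation at all). Your version is the more economical of the two; the one point worth stating explicitly, which you do implicitly via $[S_1,S_2]=1$, is that $\F_1\subseteq C_\F(S_2)$ already forces $S_1\leq C_S(S_2)$ at the level of objects, so that $x\in S_1\cap S_2$ indeed lies in $Z(S_1)$ and in the domain $PS_2$ of $\hat\phi$.
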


\begin{proof}
Fix $i\in\{1,2\}$ such that $\F_i\subseteq C_\F(S_{3-i})$. Let $R\in\F_i^{cr}$. As $[S_1,S_2]=1$, we have $S_1\cap S_2\leq Z(S_i)\leq R$. Every automorphism in $\Aut_{\F_i}(R)$ extends to an automorphism of $RS_{3-i}$ which acts as the identity on $S_{3-i}$. Hence, $\Aut_{\F_i}(R)$ centralizes $S_1\cap S_2\leq R\cap S_{3-i}$. As $\F_i$ is by assumption saturated, it follows now from Alperin's fusion theorem that $S_1\cap S_2\leq Z(\F_i)$.
\end{proof}

\begin{prop}\label{P:F1F2Centralize}
Suppose $\F$ is a fusion system over $S$, and $\F_i$ is a saturated subsystems of $\F$ over $S_i$ for $i=1,2$. Then the following are equivalent:
\begin{itemize}
 \item [(i)] $\F_1$ and $\F_2$ centralize each other.
 \item [(ii)] $\F$ contains a subsystem $\D$ which is the central product of $\F_1$ and $\F_2$.
\end{itemize}
If one and thus both of these conditions hold, then $\F_1*\F_2$ is a central product of $\F_1$ and $\F_2$. In particular, $\F_1*\F_2$ is saturated.
\end{prop}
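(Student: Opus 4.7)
I plan to handle the two implications separately; the ambient homomorphism throughout is $\alpha\colon S_1\times S_2\to S$, $(x_1,x_2)\mapsto x_1x_2$. For (ii)$\Rightarrow$(i), assume $\F$ contains a subsystem $\D$ which is the central product of $\F_1$ and $\F_2$, so $\alpha$ induces an epimorphism $\alpha_*\colon \F_1\times\F_2\to\D\subseteq \F$. Given any $\phi\in\Hom_{\F_i}(P,Q)$, the morphism $\phi\times\id_{S_{3-i}}$ in $\F_1\times\F_2$ restricts to the identity on $\hat{S}_{3-i}$; its $\alpha_*$-image is an $\F$-morphism $PS_{3-i}\to QS_{3-i}$ extending $\phi$ and fixing $S_{3-i}$ pointwise, witnessing $\phi\in\Hom_{C_\F(S_{3-i})}(P,Q)$. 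Hence $\F_i\subseteq C_\F(S_{3-i})$ for $i=1,2$.

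For (i)$\Rightarrow$(ii), the assumption forces $[S_1,S_2]=1$, so $\alpha$ is a surjective group homomorphism onto $S_1S_2$ with kernel $\{(x,x^{-1})\mid x\in S_1\cap S_2\}$; by Lemma~\ref{L:F1F2Centralize}, $S_1\cap S_2\leq Z(\F_i)$ for $i=1,2$, which makes $\ker\alpha$ strongly closed in $\F_1\times\F_2$. The heart of the argument is to show that $\alpha$ induces a functor from $\F_1\times\F_2$ into $\F$ with image $\F_1*\F_2$. By Theorem~I.6.6 of \cite{Aschbacher/Kessar/Oliver:2011}, $\F_1\times\F_2$ is generated by morphisms of the form $\phi_1\times\phi_2$ with $\phi_i\in\Hom_{\F_i}(P_i,Q_i)$, so it suffices, for each such pair, to construct $\phi\in\Hom_\F(P_1P_2,Q_1Q_2)$ satisfying $\phi|_{P_i}=\phi_i$. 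I use $\F_1\subseteq C_\F(S_2)$ to extend $\phi_1$ to $\tilde\phi_1\in\Hom_\F(P_1S_2,Q_1S_2)$ with $\tilde\phi_1|_{S_2}=\id$, and $\F_2\subseteq C_\F(S_1)$ (together with $Q_1\leq S_1$) to extend $\phi_2$ to $\tilde\phi_2\in\Hom_\F(Q_1P_2,Q_1Q_2)$ with $\tilde\phi_2|_{Q_1}=\id$; the composite $\phi:=(\tilde\phi_1|_{P_1P_2})\circ(\tilde\phi_2|_{Q_1P_2})$ sends $x_1x_2\mapsto\phi_1(x_1)\phi_2(x_2)$, which does the job and intertwines with $\alpha$.

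By construction, the morphisms $\phi$ produced in the previous paragraph are exactly the generators appearing in the definition of $\F_1*\F_2$ (since $[S_1,S_2]=1$, any $\psi\in\Hom_\F(P_1P_2,S_1S_2)$ with $\psi|_{P_i}\in\Hom_{\F_i}(P_i,S_i)$ for $i=1,2$ satisfies $\psi(x_1x_2)=\psi(x_1)\psi(x_2)$). Hence the image of $\alpha_*$ is $\F_1*\F_2$, the equalities $\hat\F_i^{\alpha_*}=\F_i$ are immediate, and together with $S_1\cap S_2\leq Z(\F_i)$ this shows $\F_1*\F_2$ is a central product of $\F_1$ and $\F_2$. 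Saturation of $\F_1*\F_2$ is inherited from saturation of $\F_1\times\F_2$ via the epimorphism $\alpha_*$ whose kernel is strongly closed, following the standard treatment in \cite[Sections~2.3 and 2.4]{Henke:2017}. I expect the main obstacle to be the explicit construction of $\phi$: the subtle point is that $\tilde\phi_2$ must be chosen on a domain containing $Q_1$ rather than $P_1$, because $\phi_1$ has already moved $P_1$ to $Q_1$ by the time $\tilde\phi_2$ is applied, so one needs to use $C_\F(S_1)\supseteq \F_2$ with the inclusion $Q_1\leq S_1$ rather than any direct hypothesis about $Q_1$.
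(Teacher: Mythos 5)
Your proposal is correct and follows essentially the same route as the paper: (ii)$\Rightarrow$(i) by pushing $\phi\times\id_{S_{3-i}}$ through the epimorphism, and (i)$\Rightarrow$(ii) by building the composite $(\tilde\phi_1|_{P_1P_2})\circ(\tilde\phi_2|_{Q_1P_2})$ sending $x_1x_2\mapsto x_1^{\phi_1}x_2^{\phi_2}$, identifying the image with $\F_1*\F_2$, and invoking Lemma~\ref{L:F1F2Centralize} plus the fact that images of saturated systems under morphisms are saturated. Your highlighted subtlety about defining $\tilde\phi_2$ on a domain containing $Q_1$ is exactly how the paper's $\hat\phi_1\hat\phi_2$ is made well-defined.
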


\begin{proof}
Suppose first that (ii) holds. Let $\D$ be a subsystem of $\F$ over a subgroup $T\leq S$ such that $\D$ is the central product of $\F_1$ and $\F_2$. Then $\alpha\colon S_1\times S_2\rightarrow T,(x_1,x_2)\mapsto x_1x_2$ induces an epimorphism from $\F_1\times \F_2$ to $\D$ with $\hat{\F}_i^{\alpha}=\F_i$ for $i=1,2$. In particular, $T=S_1S_2$ and $[S_1,S_2]=1$. From the definition of the direct product, one can easily check that $\hat{\F}_i\subseteq C_{\F_1\times \F_2}(\hat{S}_{3-i})$ for $i=1,2$. This implies $\F_i=\hat{\F}_i^{\alpha}\subseteq C_\D(\hat{S}_{3-i}^{\alpha})=C_\D(S_{3-i})\subseteq C_\F(S_{3-i})$ for each $i=1,2$. Hence (i) holds.

\smallskip

Assume now that (i) holds. Then in particular $[S_1,S_2]=1$. Set $T:=S_1S_2$ and let $\D:=\F_1*\F_2$. To complete the proof, it will be sufficient to show that $\D$ is the central product of $\F_1$ and $\F_2$. Note first that 
\[\alpha\colon S_1\times S_2\rightarrow T,(x_1,x_2)\mapsto x_1x_2\]
is a surjective group homomorphism as $[S_1,S_2]=1$ and $T=S_1S_2$. By Lemma~\ref{L:F1F2Centralize}, we have $S_1\cap S_2\leq Z(\F_i)$ for $i=1,2$. Hence, it remains only to show that $\alpha$ induces an epimorphism from $\F_1\times \F_2$ to $\D$ with $\hat{\F}_i^\alpha=\F_i$ for $i=1,2$. 

\smallskip

For $i=1,2$ let $\pi_i\colon S_1\times S_2\rightarrow S_i$ be the canonical projection map. Let $P,Q\leq S_1\times S_2$. Set $P_i=P\pi_i$ and $Q_i=Q\pi_i$ for $i=1,2$. We need to argue that there is a map
\[\Hom_{\F_1\times \F_2}(P,Q)\rightarrow \Hom_\D(P^\alpha,Q^\alpha),\phi\mapsto \psi_\phi\]
such that $(\alpha|_P)\circ\psi_\phi=\phi\circ(\alpha|_Q)$ for all $\phi\in\Hom_{\F_1\times \F_2}(P,Q)$. Fixing $\phi\in\Hom_{\F_1\times \F_2}(P,Q)$, we explain now how to construct $\psi_\phi$: It follows from the construction of $\F_1\times \F_2$ that $\phi$ is of the form $\phi=(\phi_1\times \phi_2)|_P$ for unique maps $\phi_i\in \Hom_{\F_i}(P_i,Q_i)$. For each $i=1,2$, as $\F_i\subseteq C_\F(S_{3-i})$, the morphism $\phi_i$ extends to $\hat{\phi}_i\in \Hom_\F(P_iS_{3-i},Q_iS_{3-i})$ such that $\hat{\phi}_i$ acts as the identity on $S_{3-i}$. Then $\hat{\phi}_1\hat{\phi}_2\in \Hom_\F(P_1P_2,Q_1Q_2)$ is well-defined. Moreover, for $x_1\in P_1$ and $x_2\in P_2$, we have $(x_1x_2)^{\hat{\phi}_1\hat{\phi}_2}=((x_1^{\phi_1})x_2)^{\hat{\phi}_2}=(x^{\phi_1})(x^{\phi_2})$. In particular,  $(\hat{\phi}_1\hat{\phi}_2)|_{P_i}=\phi_i\in\Hom_{\F_i}(P_i,Q_i)$ for $i=1,2$, which implies that $\hat{\phi}_1\hat{\phi}_2$ is a morphism in $\D$. Moreover, if $(x_1,x_2)\in P$, then 
$(x_1,x_2)^{\alpha\circ(\hat{\phi}_1\hat{\phi}_2)}=(x_1x_2)^{\hat{\phi}_1\hat{\phi}_2}=(x_1^{\phi_1})(x_2^{\phi_2})=(x_1,x_2)^{\phi\circ\alpha}\in Q^\alpha$. Hence, $\psi_\phi:=(\hat{\phi}_1\hat{\phi}_2)|_{P^\alpha}\in\Hom_\D(P^\alpha,Q^\alpha)$ with $(\alpha|_P)\circ\psi_\phi=\phi\circ(\alpha|_Q)$. So $\alpha$ induces a morphism from $\F_1\times \F_2$ to $\D$.

\smallskip

Note that $\alpha$ takes $\hat{S}_1=S_1\times\{1\}$ to $S_1$, and $\hat{S}_2=\{1\}\times S_2$ to $S_2$. The morphisms in $\hat{\F}_1$ are precisely the ones of the form $\phi=\phi_1\times \id_{\{1\}}$ with $\phi_1\in\Hom_{\F_1}(P_1,Q_1)$ ($P_1,Q_1\leq S$). Forming $\psi_\phi$ as above for such $\phi$, we have $\psi_\phi=\phi_1\in\Hom_{\F_1}(P_1,Q_1)$. Hence, $\hat{\F}_1^\alpha=\F_1$. Similarly, one concludes that $\hat{\F}_2^\alpha=\F_2$.

\smallskip

It remains to prove that the morphism induced by $\alpha$ is surjective. Let $P_1\leq S_1$ and $P_2\leq S_2$ be arbitrary, and let $\psi\in \Hom_\F(P_1P_2,T)$ with $\psi|_{P_i}\in\Hom_{\F_i}(P_i,S_i)$ for $i=1,2$. The subsystem $\D=\F_1*\F_2$ is by definition generated by such morphisms $\psi$, so it is sufficient to show that $\psi$ is in the image of $\alpha$.  To see this define $\phi:=(\psi|_{P_1})\times(\psi|_{P_2})\in\Hom_{\F_1\times \F_2}(P_1\times P_2,S_1\times S_2)$. Then for all $x_1\in P_1$ and $x_2\in P_2$, we have $(x_1x_2)^\psi=(x_1^{\psi|_{P_1}})(x_2^{\psi|_{P_2}})=(x_1,x_2)^{\phi\circ\alpha}=(x_1,x_2)^{\alpha\circ\psi_\phi}=(x_1x_2)^{\psi_\phi}$. Hence, $\psi=\psi_\phi$ lies in the image of $\alpha$. This completes the proof that $\D$ is a central product of $\F_1$ and $\F_2$. In particular, $\D$ is the image of $\F_1\times\F_2$ under some morphism. By \cite[Theorem~I.6.6]{Aschbacher/Kessar/Oliver:2011}, $\F_1\times \F_2$ is saturated since $\F_1$ and $\F_2$ are saturated. Moreover, by \cite[Lemma~II.5.4]{Aschbacher/Kessar/Oliver:2011}, the image of a saturated fusion system under a morphism is saturated. Hence, $\D$ is saturated.
\end{proof}

\section{The centralizer of $\E$ in $S$}\label{S:CSE}

\noindent \textbf{Let $\F$ be a saturated fusion system over $S$, and let $\m{E}$ be a normal subsystem of $\F$ over $T\leq S$. Set
\[\m{X}:=\{X\leq C_S(T)\colon \m{E}\subseteq C_\F(X)\}\mbox{ and }C_S(\E):=\<\X\>.\]}

In this section we prove Theorem~\ref{MainCSE} via a series of Lemmas.

\begin{lemma}\label{EasyCentralizer}
 Let $X\leq C_S(T)$ and $\phi\in\Hom_\F(XT,S)$. 
\begin{itemize}
\item [(a)] We have $X^\phi\leq C_S(T)$. Moreover, if $P\leq T$ and $\beta\in\Hom_\E(P,T)$, then $\beta$ is a morphism in $C_\F(X)$ if and only if $\beta^\phi\in\Hom_\E(P^\phi,T)$ is a morphism in $C_\F(X^\phi)$.
\item [(b)] If $X\in\X$ then $X^\phi\in\X$.
\item [(c)] If $N_\m{E}(T)\subseteq C_\F(X)$ then $N_{\m{E}}(T)\subseteq C_\F(X^\phi)$.
\end{itemize}
\end{lemma}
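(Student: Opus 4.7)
The plan is to derive everything from the first claim of (a) together with the equivalence in (a), which is the heart of the lemma. First, since $\E$ is normal in $\F$ the subgroup $T$ is strongly closed, so $T^\phi \leq T$; as $\phi$ is injective this forces $T^\phi = T$, whence $\phi|_T \in \Aut_\F(T)$. The hypothesis $X \leq C_S(T)$ says $[X,T]=1$ inside $XT$, and applying the injective homomorphism $\phi$ preserves this commutator relation, yielding $[X^\phi, T] = [X^\phi, T^\phi] = 1$, so $X^\phi \leq C_S(T)$.

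Next I would prove the equivalence in (a) by unpacking the definition of $C_\F(X)$: the morphism $\beta$ lies in $C_\F(X)$ exactly when it extends to some $\tilde\beta \in \Hom_\F(PX, TX)$ with $\tilde\beta|_X = \id_X$. Given such $\tilde\beta$, a direct computation shows that $\tilde\beta^\phi \in \Hom_\F(P^\phi X^\phi, T X^\phi)$ extends $\beta^\phi$ and restricts to the identity on $X^\phi$, so $\beta^\phi$ lies in $C_\F(X^\phi)$. That $\beta^\phi$ is in fact a morphism in $\E$ (so that the equivalence statement even makes sense) is furnished by the strong invariant condition (Proposition~\ref{Finvariant}(f)) applied to $\phi|_T \in \Aut_\F(T)$. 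The reverse implication is obtained by running the same argument with the $\F$-morphism $\phi^{-1}|_{X^\phi T}$ in place of $\phi$, noting that inverses of $\F$-morphisms are again $\F$-morphisms.

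For (b), suppose $X \in \X$, and let $\gamma$ be an arbitrary morphism of $\E$, which without loss of generality I may take to be of the form $\gamma \in \Hom_\E(P, T)$ for some $P \leq T$. I would apply the equivalence from (a) with $\phi^{-1}|_{X^\phi T}$ in place of $\phi$ and $X^\phi$ in place of $X$ to reduce the desired conclusion $\gamma \in C_\F(X^\phi)$ to showing $\gamma^{\phi^{-1}} \in C_\F(X)$, which is immediate from $\gamma^{\phi^{-1}} \in \E$ (strong invariance) and the hypothesis $\E \subseteq C_\F(X)$. Part (c) uses the same reduction, with the additional observation that if $\gamma \in N_\E(T)$ extends to some $\tilde\gamma \in \Aut_\E(T)$, then conjugating by $\phi^{-1}|_T \in \Aut_\F(T)$ yields $\tilde\gamma^{\phi^{-1}} \in \Aut_\E(T)$ (using $\Aut_\E(T) \unlhd \Aut_\F(T)$ from the $\F$-invariance of $\E$, Proposition~\ref{Finvariant}(b)), which extends $\gamma^{\phi^{-1}}$; hence $\gamma^{\phi^{-1}} \in N_\E(T) \subseteq C_\F(X)$.

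The main obstacle is essentially bookkeeping with the exponential conjugation notation and making sure every auxiliary morphism stays in the correct subsystem. Once the observation that $\phi|_T \in \Aut_\F(T)$ is in place and the strong invariant condition is used to keep $\E$ closed under such conjugation, the arguments for (b) and (c) are formal consequences of (a).
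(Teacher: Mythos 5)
Your proposal is correct and follows essentially the same route as the paper: part (a) is proved by conjugating the extension $\hat{\beta}$ of $\beta$ by $\phi$ (and reversing via $\phi^{-1}$), and parts (b) and (c) are reduced to (a) using that $\E$, respectively $\Aut_\E(T)$, is invariant under $\phi$-conjugation because $\E\unlhd\F$. The only cosmetic difference is that you invoke the strong invariant condition of Proposition~\ref{Finvariant}(f) where the paper simply uses $\E^\phi=\E$; these are interchangeable here.
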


\begin{proof}
Since $X\leq C_S(T)$ and $\phi$ acts on $T$, we have $X^\phi\leq C_S(T)$. Let now $P\leq T$ and $\beta\in\Hom_\E(P,T)$. If $\beta$ is a morphism in $C_\F(X)$, then $\beta$ extends to $\hat{\beta}\in\Hom_\F(PX,S)$ with $\hat{\beta}|_X=\id_X$. It follows in this situation that $\hat{\beta}^\phi\in\Hom_\F(P^\phi X^\phi,TX^\phi)$ extends $\beta^\phi\in\Hom_\E(P^\phi,T)$ and induces the identity on $X^\phi$. Hence, if $\beta$ is a morphism in $C_\F(X)$, then $\beta^\phi$ is a morphism in $C_\F(X^\phi)$. Applying this property with $X^\phi$, $\beta^\phi$ and $\phi^{-1}$ in place of $X$, $\beta$ and $\phi$, we get that, if $\beta^\phi$ is a morphism in $C_\F(X^\phi)$, then $\beta$ is a morphism in $C_\F(X)$. This shows (a).

\smallskip

As $\m{E}$ is normal in $\F$, we have $\m{E}^\phi=\m{E}$. So for every $P\leq T$ and every $\alpha\in\Hom_\m{E}(P,T)$, $\alpha$ is of the form $\alpha=\beta^\phi$ with $\beta\in \Aut_{\m{E}}(P^{\phi^{-1}},T)$. Hence, by (a), $\alpha$ is a morphism in $C_\F(X^\phi)$ if $\m{E}\subseteq C_\F(X)$. Since $P$ and $\alpha$ were arbitrary, this yields $\m{E}\subseteq C_\F(X^\phi)$ if $\m{E}\subseteq C_\F(X)$. Hence (b) holds. 

\smallskip

As $\m{E}$ is normal in $\F$, we have $\Aut_{\m{E}}(T)^\phi=\Aut_{\m{E}}(T)$. So $\alpha\in\Aut_\m{E}(T)$ is of the form $\alpha=\beta^\phi$ with $\beta\in \Aut_{\m{E}}(T)$ and (c) follows from (a).
\end{proof}

It will be convenient to use the following notation for every subgroup $P\leq S$:
\[\Ac(P):=\{\phi\in\Aut_\F(P)\colon [P,\phi]\leq P\cap T,\;\phi|_{P\cap T}\in\Aut_\m{E}(P\cap T)\},\]
and 
\[\oH(P):=\{\phi\in\Aut_\F(P)\colon \phi\mbox{ extends to an element of }\Aut_\F(PN_T(P))\}.\]

\begin{lemma}\label{FrattiniCons}
If $P\leq S$ is fully $\F$-normalized, then $\Aut_\F(P)=\oH(P)\Ac(P)$.
\end{lemma}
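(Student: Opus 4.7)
The plan is a Frattini-type argument. Set $Q := P \cap T$. Since $T$ is strongly closed in $\F$, every $\phi \in \Aut_\F(P)$ preserves $Q$, so restriction defines a map $\Aut_\F(P) \to \Aut_\F(Q)$. One checks easily that $\Ac(P)$ consists exactly of those $\phi \in \Aut_\F(P)$ inducing the identity on $P/Q$ and whose restriction to $Q$ lies in $\Aut_\E(Q)$. Given $\phi \in \Aut_\F(P)$, it therefore suffices to produce $\eta \in \oH(P)$ such that (i) $\eta$ induces on $P/Q$ the same automorphism as $\phi$, and (ii) $\eta|_Q \in \phi|_Q \cdot \Aut_\E(Q)$; for then $\alpha := \eta^{-1}\phi$ automatically lies in $\Ac(P)$, giving the desired factorization $\phi = \eta \alpha \in \oH(P)\Ac(P)$.

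The first step is to apply the Frattini condition of the $\F$-invariant subsystem $\E$ to the morphism $\phi|_Q \in \Hom_\F(Q,T)$. This yields a factorization $\phi|_Q = \alpha|_Q \circ \phi_0$ (or its mirror form) with $\alpha \in \Aut_\F(T)$ and $\phi_0 \in \Hom_\E$. In other words, the restriction $\phi|_Q$ differs from an $\F$-automorphism of the whole strongly closed subgroup $T$ only by an $\E$-morphism.

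The second step is to lift this information from $T$ back to $P$. Since $P \in \F^f$, the Sylow axiom gives $\Aut_S(P) \in \Syl_p(\Aut_\F(P))$, and in particular $\Aut_T(P) \leq \Aut_S(P) \leq \oH(P)$, since conjugation by an element of $N_S(P)$ extends to all of $N_S(P) \supseteq PN_T(P)$. Using the extension axiom applied to the fully $\F$-normalized subgroup $P$, I construct an $\eta \in \Aut_\F(P)$ which lies in $\oH(P)$ and whose restriction to $Q$ realizes the coset $\alpha|_Q \cdot \Aut_\E(Q)$, and which induces on $P/Q$ the same automorphism as $\phi$. Here one is free to multiply the candidate $\eta$ by elements of $\Aut_T(P) \leq \oH(P)$ to adjust the action on $P/Q$.

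The final step is a direct verification: setting $\alpha := \eta^{-1}\phi$, condition $[P,\alpha] \leq Q$ follows from $\eta$ and $\phi$ inducing the same automorphism of $P/Q$, while $\alpha|_Q \in \Aut_\E(Q)$ follows from the Frattini factorization together with the choice of $\eta|_Q$. The principal obstacle is the second step: although the Frattini factorization produces an element $\alpha \in \Aut_\F(T)$, this $\alpha$ need not stabilize $P$, so it does not directly restrict to an automorphism of $P$. The bridge between $\Aut_\F(T)$ and $\oH(P) \leq \Aut_\F(P)$ must be supplied by the extension axiom applied to $P \in \F^f$, and arranging simultaneously both the correct coset on $Q$ and the correct action on $P/Q$ is the technical heart of the argument.
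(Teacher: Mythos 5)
There is a genuine gap. Your reduction is fine: since $\Aut_\E(P\cap T)\unlhd\Aut_\F(P\cap T)$, producing $\eta\in\oH(P)$ with $\eta^{-1}\phi\in\Ac(P)$ is indeed equivalent to the lemma. But that is exactly the point: the existence of such an $\eta$ \emph{is} the lemma, and your second step never actually constructs it. You write ``I construct an $\eta\in\Aut_\F(P)$ which lies in $\oH(P)$ and whose restriction to $Q$ realizes the coset $\alpha|_Q\cdot\Aut_\E(Q)$, and which induces on $P/Q$ the same automorphism as $\phi$,'' and then yourself identify this as the principal obstacle. The extension axiom cannot deliver this: it only lets you extend an automorphism of $P$ that you already have (and whose $N_\eta$ is large enough) to a bigger subgroup; it does not let you descend the element $\alpha\in\Aut_\F(T)$ coming from the Frattini property to an automorphism of $P$, nor does it let you prescribe simultaneously the restriction to $Q$ and the induced action on $P/Q$. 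So the argument as written assumes what is to be proved.

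The paper's proof avoids this entirely by running the Frattini argument inside the finite group $\Aut_\F(P)$ rather than inside the fusion system. The two key observations are: (i) $\Ac(P)$ is a \emph{normal subgroup} of $\Aut_\F(P)$ (because $\E\unlhd\F$), and since $P\in\F^f$ the Sylow axiom makes $\Ac(P)\cap\Aut_S(P)$ a Sylow $p$-subgroup of $\Ac(P)$; the group-theoretic Frattini argument then gives $\Aut_\F(P)=N_{\Aut_\F(P)}(\Ac(P)\cap\Aut_S(P))\,\Ac(P)$. (ii) $\Aut_T(P)\leq\Ac(P)\cap\Aut_S(P)$, so any $\phi$ normalizing $\Ac(P)\cap\Aut_S(P)$ satisfies $N_T(P)\leq N_\phi$; the extension axiom (used here in its proper role, to extend an automorphism of $P$ to $PN_T(P)$) together with the strong closure of $T$ then puts $\phi$ in $\oH(P)$. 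If you want to salvage your approach, the shortest path is to prove these two facts and quote the Frattini argument for groups; the fusion-theoretic Frattini property of $\E$ applied to $\phi|_Q$ is not the right tool here.
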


\begin{proof}
Let $P\leq S$ be fully $\F$-normalized. Note that $\Ac(P)$ is a normal subgroup of $\Aut_\F(P)$, as $\m{E}$ is a normal subsystem of $\F$. Since $P$ is fully normalized, $\Aut_S(P)$ is a Sylow $p$-subgroup of $\Aut_\F(P)$ by the Sylow axiom. The Frattini argument for groups gives thus 
\[\Aut_\F(P)=N_{\Aut_\F(P)}(\Ac(P)\cap \Aut_S(P))\Ac(P).\] 
So it is sufficient to show that $N_{\Aut_\F(P)}(\Ac(P)\cap \Aut_S(P))\leq \oH(P)$. Note that $[P,N_T(P)]\leq P\cap T$ and thus $[P,\Aut_T(P)]\leq P\cap T$. If $\phi\in\Aut_T(P)$ then $\phi|_{P\cap T}\in\Aut_T(P\cap T)\leq \Aut_{\m{E}}(P\cap T)$.  Hence, $\Aut_T(P)\leq \Ac(P)\cap \Aut_S(P)$. So for any $\phi\in N_{\Aut_\F(P)}(\Ac(P)\cap \Aut_S(P))$, we have $N_T(P)\leq N_\phi$. By the extension axiom, $\phi$ extends thus to a member $\hat{\phi}\in\Hom_\F(PN_T(P),S)$. As $T$ is strongly closed, we have $(PN_T(P))\hat{\phi}\leq PN_T(P)$. This shows $\phi\in \oH(P)$ and thus $N_{\Aut_\F(P)}(\Ac(P)\cap \Aut_S(P))\leq \oH(P)$ as required.
\end{proof}

\begin{lemma}\label{XInvariant}
The set $\m{X}$ is invariant under taking $\F$-conjugates, i.e. for every $X\in\m{X}$ and every $\alpha\in\Hom_\F(X,S)$, we have $X^\alpha\in\m{X}$. 
\end{lemma}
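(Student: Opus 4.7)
The plan is to produce a morphism $\phi\in\Hom_\F(XT,S)$ with $X^\phi=X^\alpha$, so that Lemma~\ref{EasyCentralizer}(b) immediately yields $X^\alpha\in\X$. The obstacle to applying the extension axiom directly to $\alpha$ is that $X^\alpha$ need not be fully $\F$-centralized; I bypass this by routing through a fully $\F$-normalized conjugate.

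First, I would pick $Y\in(X^\alpha)^\F\cap\F^f$ and $\tau\in\Hom_\F(N_S(X^\alpha),S)$ with $(X^\alpha)^\tau=Y$. The composition $\alpha\tau\colon X\to Y$ then has fully $\F$-centralized target. Because $X\leq C_S(T)$ forces $c_t|_X=\id_X$, and hence $c_t^{\alpha\tau}=\id_Y=c_1|_Y\in\Aut_S(Y)$, for every $t\in T$, we have $T\leq N_{\alpha\tau}$, and the extension axiom produces $\widehat{\alpha\tau}\in\Hom_\F(XT,S)$ extending $\alpha\tau$. Strong closure of $T$ makes $\widehat{\alpha\tau}$ an isomorphism $XT\xrightarrow{\sim}YT$, and a first application of Lemma~\ref{EasyCentralizer}(b) yields $Y=X^{\widehat{\alpha\tau}}\in\X$; in particular $T\leq C_S(Y)$.

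To recover $X^\alpha\in\X$ rather than merely $Y\in\X$, I would set $\phi:=\widehat{\alpha\tau}\circ\tau^{-1}|_{YT}$. If $\tau^{-1}$ is defined on $YT$, then $\phi\in\Hom_\F(XT,S)$ satisfies $\phi|_X=\alpha\tau\circ\tau^{-1}|_Y=\alpha$, so $X^\phi=X^\alpha$, and a second application of Lemma~\ref{EasyCentralizer}(b) finishes.

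The hard part, and the main obstacle of the plan, is to verify that $\tau^{-1}$ is actually defined on $YT$---equivalently, that $T$ lies in the image of $\tau$. Since $T$ is strongly closed and $\tau$ is injective, this is equivalent to $T\leq N_S(X^\alpha)$. I would deduce this from the just-constructed isomorphism $\widehat{\alpha\tau}\colon XT\xrightarrow{\sim}YT$ together with Lemma~\ref{EasyCentralizer}(a): the relation $[Y,T]=1$, guaranteed by $Y\in\X$, transports back through $\widehat{\alpha\tau}$ and $\tau$ to yield $[X^\alpha,T]=1$, which gives $T\leq C_S(X^\alpha)\leq N_S(X^\alpha)$ as required.
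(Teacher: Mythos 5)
Your first step is correct and is a genuinely nice observation: composing $\alpha$ with $\tau$ so as to land on a fully normalized conjugate $Y$, noting that $c_t|_X=\id_X$ for $t\in T$ forces $T\leq N_{\alpha\tau}$, extending by the extension axiom to $XT$, and invoking Lemma~\ref{EasyCentralizer}(b) does prove that every \emph{fully normalized} $\F$-conjugate of a member of $\X$ again lies in $\X$. The problem is the return trip, and there the argument is circular. The condition $[X^\alpha,T]=1$ is exactly the condition $X^\alpha\leq C_S(T)$, which is part of the conclusion $X^\alpha\in\X$ you are trying to prove; it cannot be assumed. Your proposed derivation of it -- ``transport $[Y,T]=1$ back through $\widehat{\alpha\tau}$ and $\tau$'' -- does not work: transporting through $\widehat{\alpha\tau}^{-1}$ only returns the known relation $[X,T]=1$, and transporting through $\tau^{-1}$ requires $\tau^{-1}$ to be defined on $T$, i.e.\ $T\leq\Im(\tau)=N_S(X^\alpha)^\tau$, which is precisely the statement at issue. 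The most you can legitimately extract from $\tau$ is $[X^\alpha,N_T(X^\alpha)]=1$ (since $N_T(X^\alpha)\leq N_S(X^\alpha)$ and $N_T(X^\alpha)^\tau\leq T$ centralizes $Y$), and in a $p$-group the equality $N_T(W)=C_T(W)$ does not force $T\leq N_S(W)$. Nor can you extend $\tau^{-1}|_{Y}\colon Y\to X^\alpha$ using the extension axiom, because its target $X^\alpha$ is not known to be fully centralized. So the gap is genuine and sits exactly at the hard point of the lemma.

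This difficulty is why the paper's proof is not a two-step conjugation argument but an induction: it considers a counterexample $\alpha$ defined on a subgroup $P\geq X$ with $|P\cap T|$ maximal, uses Alperin's fusion theorem to reduce to $\alpha\in\Aut_\F(P)$ with $P\in\F^f\cap\F^{cr}$, and then factors $\Aut_\F(P)=\oH(P)\Ac(P)$ (Lemma~\ref{FrattiniCons}); the $\oH(P)$-part extends to $PN_T(P)$, which strictly increases $|P\cap T|$ and is absorbed by the induction, while the $\Ac(P)$-part is handled by a separate argument comparing $\alpha$ with an extension of $\alpha|_{P\cap T}$ modulo $O_p(\Aut_\F((P\cap T)X))$. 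Some mechanism of this kind, which enlarges the intersection of the domain with $T$ rather than merely passing to a fully normalized conjugate of $X$, is needed; your proposal as it stands does not supply one.
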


\begin{proof}
 Assume the assertion is wrong. Then there exist $X\in\X$ and $\alpha\in\Hom_\F(X,S)$ such that $X^\alpha\not\in\m{X}$. In particular, there exist $X\in\X$, $X\leq P\leq S$ and $\alpha\in\Hom_\F(P,S)$ such that $X^\alpha\not\in\X$. We choose such a triple $(X,P,\alpha)$ such that $|P\cap T|$ is maximal.

\emph{Step~1:} We show that we can choose the triple $(X,P,\alpha)$ such that $P\in\F^f\cap \F^{cr}$ and $\alpha\in\Aut_\F(P)$. By Alperin's fusion theorem, there exist subgroups $P=P_0,\dots,P_n=P^\alpha$ of $S$ and $Q_1,\dots,Q_n\in\F^f\cap\F^{cr}$ and $\alpha_i\in\Aut_\F(Q_i)$ such that $\<P_{i-1},P_i\>\leq Q_i$ and $P_{i-1}^{\alpha_i}=P_i$ for $i=1,\dots,n$ and $\alpha=\alpha_1|_{P_0}\circ \alpha_2|_{P_1}\circ\cdots\circ \alpha_n|_{P_{n-1}}$. Setting $T_0:=P\cap T$ and $T_i=T_{i-1}^{\alpha_i}$ for $i=1,\dots,n$, we have $T_i\leq P_i\cap T\leq Q_i\cap T$ as $T$ is strongly closed. So $|Q_i\cap T|\geq |T_i|=|T_0|=|P\cap T|$ for $i=1,\dots,n$. Set $X_0:=X$ and $X_i=X_{i-1}^{\alpha_i}$ for $i=1,\dots,n$. As $X_n=X^\alpha$ is by assumption not a member of $\X$, there exists $i\in\{1,\dots,n\}$ such that $X_i\not\in\X$. Choosing such $i$ minimal, we have $X_{i-1}\in\X$ and $X_{i-1}^{\alpha_i}=X_i\not\in\X$. So replacing $(X,P,\alpha)$ by $(X_{i-1},Q_i,\alpha_i)$, we may assume that $P\in\F^f\cap\F^{cr}$ and $\alpha\in\Aut_\F(P)$. We will make this assumption from now on.

\emph{Step~2:} We reduce to the case that $\alpha\in \Ac(P)$. By Lemma~\ref{FrattiniCons}, we can write $\alpha=\gamma\beta$ with $\gamma\in\oH(P)$ and $\beta\in\Ac(P)$. If $T\leq P$ then Lemma~\ref{EasyCentralizer}(b) shows that $X^\alpha\in\X$. Hence, $T\not\leq P$ and $P$ is properly contained in the $p$-subgroup $PT$. Hence, $P<N_{PT}(P)=PN_T(P)$. The definition of $\oH(P)$ together with the maximality of $|P\cap T|$ implies now that $X':=X^\gamma\in\X$. So replacing $(X,P,\alpha)$ by $(X',P,\beta)$, we may assume $\alpha\in\Ac(P)$.

\smallskip

\emph{Step~3:} We now reach the final contradiction by showing that $X^\alpha\in\X$. As argued above, we may assume that $P\in\F^f\cap\F^{cr}$ and $\alpha\in\Ac(P)$. Then $\phi:=\alpha|_{P\cap T}\in\Aut_\m{E}(P\cap T)$. Set $Y:=(P\cap T)X$. As $X\in\X$, $\phi$ extends to $\hat{\phi}\in\Aut_\F(Y)$ with $\hat{\phi}|_X=\id_X$. By \cite[(7.18)]{Aschbacher:2011}, $P\cap T\in\E^c$ and thus $X\cap T\leq Z(T)\leq P\cap T$. Thus, $Y\cap T=(P\cap T)(X\cap T)=P\cap T$. Note that $[Y,\alpha]\leq [P,\alpha]\leq P\cap T\leq Y\cap T$. In particular, $\alpha$ normalizes $Y$ and $\hat{\phi}^{-1}\alpha\in C_{\Aut_\F(Y)}(Y/Y\cap T)\cap C_{\Aut_\F(Y)}(Y\cap T)\leq O_p(\Aut_\F(Y))$, where the last inclusion uses \cite[Lemma~A.2]{Aschbacher/Kessar/Oliver:2011}. Let $\beta\in\Hom_\F(N_S(Y),S)$ such that $Y^\beta$ is fully normalized. As argued above, we have $T\not\leq P$. So $P\cap T<T$ and thus $P\cap T<N_T(P\cap T)\leq N_T(Y)$, where the last inclusion uses $X\leq C_S(T)$. So $|N_S(Y)\cap T|=|N_T(Y)|>|P\cap T|$ and the maximality of $|P\cap T|$ yields that $X^\beta\in\X$. Similarly, $(X')^{\beta^{-1}}\in\X$ for every $X'\leq Y^\beta$ with $X'\in\X$. As $Y^\beta$ is fully normalized, $Y^\beta$ is fully automized and we conclude $(\hat{\phi}^{-1}\alpha)^\beta\in O_p(\Aut_\F(Y^\beta))\leq \Aut_S(Y^\beta)$. So there exists $s\in N_S(Y^\beta)$ with $(\hat{\phi}^{-1}\alpha)^\beta=c_s|_{Y^\beta}$. Then $\alpha|_Y=\hat{\phi}\circ (\beta|_Y)\circ (c_s|_{Y^\beta})\circ \beta^{-1}|_{Y^\beta}$. As $\hat{\phi}|_X=\id_X$, it follows $X^\alpha=((X^\beta)^s)^{\beta^{-1}}$. As argued above $X^\beta\in\X$ and so $X':=(X^\beta)^s\leq Y^\beta$ is an element of $\X$ by Lemma~\ref{EasyCentralizer}(b). So again by the above, we have $X^\alpha=(X')^{\beta^{-1}}\in\X$, which contradicts the choice of $X$ and $\alpha$. 
\end{proof}

\begin{prop}\label{WeaklyClosedCentralized} 
Let $R\leq C_S(T)$ such that $R$ is weakly $\F$-closed and $\Aut_\E(T)\subseteq C_\F(R)$. Then $R\in\X$.
\end{prop}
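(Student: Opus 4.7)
By Alperin's fusion theorem applied to the saturated subsystem $\E$, every $\E$-morphism is a composition of restrictions of automorphisms $\alpha\in\Aut_\E(P)$ with $P\in\E^{cr}\cap\E^f$. Hence it suffices to show that every such $\alpha$ extends to an element of $\Aut_\F(PR)$ acting as the identity on $R$. Fix $P\in\E^{cr}\cap\E^f$ and $\alpha\in\Aut_\E(P)$.

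Since $P\in\E^c$ and $\E$ is normal in $\F$, the extension property for normal subsystems (as used in the proof of Lemma~\ref{LocalNormalSubsystems} via \cite[(6.10)(3)]{Aschbacher:2008}) yields $\hat\alpha\in\Aut_\F(PC_S(P))$ extending $\alpha$ with $[C_S(P),\hat\alpha]\leq Z(P)$. Because $R\leq C_S(T)\leq C_S(P)$ and $R$ is weakly $\F$-closed, $R^{\hat\alpha}=R$, so $\hat\alpha$ restricts to some $\hat\alpha_0\in\Aut_\F(PR)$ extending $\alpha$.

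A direct calculation gives $R\cap P=R\cap Z(P)=R\cap T$: the first equality follows from $R\leq C_S(P)$, which forces $R\cap P\leq C_P(P)=Z(P)$; the second follows from $P\in\E^c$ (giving $Z(T)\leq C_T(P)=Z(P)\leq P\leq T$) together with $R\leq C_S(T)$ (so $R\cap T\leq Z(T)$). Combined with $[C_S(P),\hat\alpha]\leq Z(P)$, this yields $[R,\hat\alpha_0]\leq R\cap T$, and hence $\hat\alpha_0$ acts trivially on $R/(R\cap T)$. Since $R\cap T\leq P$ and $\hat\alpha_0$ extends $\alpha$, one has $\hat\alpha_0|_{R\cap T}=\alpha|_{R\cap T}$, so it remains only to show $\alpha|_{R\cap T}=\id_{R\cap T}$.

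The hypothesis $\Aut_\E(T)\subseteq C_\F(R)$ immediately yields that $\Aut_\E(T)$ fixes $R\cap T$ pointwise. The main obstacle---and the crux of the argument---is to transfer this property from $\Aut_\E(T)$ to $\Aut_\E(P)$ for arbitrary $P\in\E^{cr}\cap\E^f$. The plan is to show $R\cap T\leq Z(\E)$ by appealing to the standard characterization $Z(\E)=C_{Z(T)}(\Aut_\E(T))$ valid for saturated $\E$: since $R\cap T\leq Z(T)$ is fixed pointwise by $\Aut_\E(T)$, one obtains $R\cap T\leq Z(\E)$, whence every $\E$-morphism---and in particular $\alpha$---fixes $R\cap T$ pointwise. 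This completes the proof.
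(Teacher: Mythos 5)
Your opening reduction (Alperin's fusion theorem in $\E$ plus the closure of $C_\F(R)$-morphisms under composition and restriction) is exactly how the paper begins, but the argument then breaks down at the step you yourself identify as the crux. The identity $Z(\E)=C_{Z(T)}(\Aut_\E(T))$ is not a valid characterization of the centre of a saturated fusion system: take $\E=\F_{D_8}(S_4)$ at $p=2$, where $\Aut_\E(D_8)=\Inn(D_8)$ centralizes $Z(D_8)$, so $C_{Z(T)}(\Aut_\E(T))=Z(D_8)\neq 1$, yet $Z(\E)=1$ because the central involution is fused to a non-central one inside an essential subgroup. Detecting $Z(\E)$ requires controlling $\Aut_\E(Q)$ for $Q$ ranging over a conjugation family, not just $Q=T$; and showing that $R\cap T$ (or $R$) is fixed pointwise by $\Aut_\E(Q)$ for all such $Q$, given only the hypothesis on $\Aut_\E(T)$, is precisely the content of the proposition. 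So your reduction is circular in effect: it replaces the statement by an instance of itself. The paper gets around this with a genuine induction: it takes a counterexample $\phi\in\Hom_\E(P,T)$ with $|P|$ maximal, passes to $Q\in P^\F\cap\F^f$, splits $\Aut_\E(Q)=O^{p'}(\Aut_\E(Q))\,N_{\Aut_\E(Q)}(\Aut_T(Q))$ by a Frattini argument, handles the normalizer by extending to $N_T(Q)>Q$ and invoking maximality, and handles $O^{p'}(\Aut_\E(Q))=\<\Aut_T(Q)^{\Aut_\E(Q)}\>$ by extending to $QC_S(Q)$ and using that the weakly closed $R$ is invariant under $\Aut_\F(QC_S(Q))$ and centralized by $\Aut_T(QC_S(Q))$, hence by the whole normal closure.

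There is a second, smaller gap: even granting $\hat\alpha_0|_{R\cap T}=\id$, the facts $[R,\hat\alpha_0]\leq R\cap T$ and $\hat\alpha_0|_{R\cap T}=\id_{R\cap T}$ only say that $\hat\alpha_0|_R$ stabilizes the chain $1\leq R\cap T\leq R$, i.e.\ is a $p$-element of $\Aut(R)$ (cf.\ \cite[Lemma~A.2]{Aschbacher/Kessar/Oliver:2011}); it need not be the identity. This one is repairable: since $\Aut_T(P)$ visibly centralizes $R\leq C_S(T)$ and $\Aut_\E(P)=O^p(\Aut_\E(P))\Aut_T(P)$ for $P\in\E^f$, you may reduce to $p'$-elements $\alpha$, choose $\hat\alpha_0$ of $p'$-order, and conclude $[R,\hat\alpha_0]=[R,\hat\alpha_0,\hat\alpha_0]=1$ by coprime action. (You should also justify applying the extension property in the form $[C_S(P),\hat\alpha]\leq Z(P)$ to a $P$ that is only fully $\E$-normalized rather than fully $\F$-normalized.) But the first gap is fatal as written: without a correct route to $\alpha|_{R\cap T}=\id_{R\cap T}$ for all $P$ and all $\alpha\in\Aut_\E(P)$, the proof does not go through.
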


\begin{proof}
Assume $R\not\in\X$. Then there exists $P\leq T$ and $\phi\in\Hom_\E(P,T)$ such that $\phi$ is not a morphism in $C_\F(R)$. We choose $P$ and $\phi$ such that $|P|$ is maximal. Note that the composition of morphisms in $C_\F(R)$ is a morphism in $C_\F(R)$, and similarly a restriction of a morphism in $C_\F(R)$ is in $C_\F(R)$. Hence, by Alperin's fusion theorem, we may assume $P\in\m{E}^{cr}\cap\m{E}^f$ and $\phi\in\Aut_\E(P)$. Since by assumption $\Aut_\E(T)\subseteq C_\F(R)$, we have $P\neq T$. Let $\chi\in\Hom_\F(N_S(P),S)$ such that $Q:=P^\chi\in\F^f$.

\smallskip

\emph{Step~1:} We show that $\Aut_\E(Q)$ is contained in $C_\F(R)$. 

By Lemma~\ref{LocalNormalSubsystems}, $Q\in\m{E}^f$. So by the Sylow axiom, $\Aut_T(Q)$ is a Sylow $p$-subgroup of $\Aut_\E(Q)$. The Frattini argument for groups yields therefore 
\[\Aut_\E(Q)=O^{p^\prime}(\Aut_\E(Q))N_{\Aut_\E(Q)}(\Aut_T(Q)).\] 
As $Q\in\E^f$, by the extension axiom, every element of $N_{\Aut_\E(Q)}(\Aut_T(Q))$ extends to an element of $\Aut_\m{E}(N_T(Q))$. As $P\neq T$, $Q$ is a proper subgroup of $T$ and thus of $N_T(Q)$. So by the maximality of $|P|=|Q|$, every element of $\Aut_\E(N_T(Q))$ is a morphism in $C_\F(R)$. Thus, it is sufficient to show that $O^{p^\prime}(\Aut_\E(Q))$ lies in $C_\F(R)$. 

By Lemma~\ref{Wellknown}, $Q\in \m{E}^{cr}\cap \F^f$. By the extension axiom, every element of $\Aut_\m{E}(Q)$ extends to an $\F$-automorphism of $QC_S(Q)$. Clearly, every element of $\Aut_T(Q)$ extends to an element of $\Aut_T(QC_S(Q))$. Thus, every element of  $O^{p^\prime}(\Aut_\E(Q))=\<\Aut_T(Q)^{\Aut_\E(Q)}\>$ extends to an element of $\<\Aut_T(QC_S(Q))^{\Aut_\F(QC_S(Q))}\>$. Notice that $R\leq C_S(T)\leq C_S(Q)$ and in particular $[R,\Aut_T(QC_S(Q))]=1$. Since $R$ is weakly closed, $R$ is $\Aut_\F(QC_S(Q))$-invariant. Thus $[R,\<\Aut_T(QC_S(Q))^{\Aut_\F(QC_S(Q))}\>]=1$. Hence, every element of $O^{p^\prime}(\Aut_\E(Q))$ extends to an element of $\Aut_\F(QC_S(Q))$ which centralizes $R$. Thus, $O^{p^\prime}(\Aut_\E(R))$ lies in $C_\F(R)$. This finishes Step~1.

\smallskip

\emph{Step~2:} We reach a contradiction by showing that $\phi$ is a morphism in $C_\F(R)$. As $\m{E}$ is normal in $\F$ and $\phi\in\Aut_\E(P)$, $\psi:=\phi^\chi$ is an element of $\Aut_\E(Q)$. Hence, by Step~1, $\psi$ is a morphism in $C_\F(R)$. Moreover, by the Frattini property, we can write $\chi|_{N_T(P)}=\chi_0\circ\alpha$ for some $\chi_0\in\Hom_\E(N_T(P),T)$ and $\alpha\in\Aut_\F(T)$. Then $\phi=(\psi^{\alpha^{-1}})^{\chi_0^{-1}}$. As $T\unlhd S$ is fully $\F$-normalized, $\alpha$ extends by the extension axiom to $\hat{\alpha}\in\Aut_\F(TC_S(T))$. As $R$ is weakly closed, we have $R^{\hat{\alpha}^{-1}}=R$. So by Lemma~\ref{EasyCentralizer}(a), $\psi^{\alpha^{-1}}=\psi^{\hat{\alpha}^{-1}}$ is a morphism in $C_\F(R^{\hat{\alpha}^{-1}})=C_\F(R)$. Since $P\neq T$, $P$ is a proper subgroup of $N_T(P)$. Thus, the maximality of $|P|$ yields that $\chi_0$ is a morphism in $C_\F(R)$. Thus, $\phi$ is the composition of morphisms of $C_\F(R)$ and thus in $C_\F(R)$. This completes Step~2 and the proof of the proposition.
\end{proof}

Our main goal will be to show that $C_S(\E):=\<\X\>$ is an element of $\X$. By Lemma~\ref{XInvariant}, $\<\X\>$ is weakly closed. So by Proposition~\ref{WeaklyClosedCentralized}, it is sufficient to show that $\<\X\>$ contains $\Aut_\E(T)$ in its centralizer. We will prove this by showing that there is a unique largest subgroup of $S$ which is centralized by $N_\E(T)$. For that we will work with models for constrained subsystems. Set
\[\G:=N_{N_\F(T)}(TC_S(T)).\]

\begin{lemma}\label{GN}
The subsystem $\G$ is a constrained fusion system over $S$, and $N_\E(T)$ is a normal subsystem of $\G$. Therefore, there exists a model $G$ for $\G$, and a unique normal subgroup $N$ of $G$ such that $N$ is a model for $N_\E(T)$.
\end{lemma}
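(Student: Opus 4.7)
The plan is to deduce Lemma~\ref{GN} directly from Lemma~\ref{PropHelp} applied with $X:=T$, followed by an application of Theorem~\ref{Model1}.

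First I would verify the hypotheses of Lemma~\ref{PropHelp} for $X=T$. Since $\E$ is normal in $\F$, the subgroup $T$ is strongly closed in $\F$, hence normal in $S$; in particular $T\in\F^f$ and $N_S(T)=S$. The requirement $X\cap T=T\in\F^f$ is immediate, and $T\in\E^c$ because $T$ is the Sylow of $\E$: it is the only member of its $\E$-conjugacy class, and $C_T(T)=Z(T)\le T$. Finally $X=T\le TC_S(T)=(X\cap T)C_S(T)$.

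Applying Lemma~\ref{PropHelp} then yields that $\G=N_{N_\F(T)}(TC_S(T))$ is a saturated constrained fusion system, necessarily over $S$ since $N_S(T)=S$, and that $N_\E(T)=N_\E(X\cap T)$ is a normal subsystem of $\G$. From here, Theorem~\ref{Model1}(a) provides a model $G$ for $\G$, and Theorem~\ref{Model1}(c), applied to the normal subsystem $N_\E(T)$ of $\G$, produces a unique normal subgroup $N\unlhd G$ which is a model for $N_\E(T)$.

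I do not foresee any genuine obstacle, since the substantive work has already been absorbed into the proof of Lemma~\ref{PropHelp}, and the case $X=T$ trivialises each of its hypotheses. Should one prefer not to invoke Lemma~\ref{PropHelp}, a direct argument is available along the same lines: saturation of $\G$ is automatic because $TC_S(T)\unlhd S$ is fully $N_\F(T)$-normalized; constrainedness follows from $C_S(TC_S(T))\le C_S(T)\le TC_S(T)$; and normality of $N_\E(T)$ in $\G$ can be assembled from Lemma~\ref{LocalNormalSubsystems} (saturation and $N_\F(T)$-normality of $N_\E(T)$), Proposition~\ref{Finvariant}(f) (upgrading $N_\F(T)$-invariance to $\G$-invariance), and the extension axiom for $\E\unlhd\F$ applied to elements of $\Aut_\E(T)$, whose extensions in $\Aut_\F(TC_S(T))$ automatically lie in $\G$.
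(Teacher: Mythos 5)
Your proposal is correct and essentially matches the paper's proof: the paper obtains the constrainedness of $\G$ and the normality of $N_\E(T)$ in $\G$ as a special case of Aschbacher's Lemma~6.10(2) (remarking that it can also be seen directly) and then applies Theorem~\ref{Model1}(a),(c), exactly as you do. Your detour through Lemma~\ref{PropHelp} with $X=T$ is legitimate and non-circular (its hypotheses trivialise as you say), but note that the proof of Lemma~\ref{PropHelp} in the case $X=T$ collapses to that very citation of Aschbacher's result, so it is a repackaging rather than a different argument; your sketched direct alternative corresponds to the paper's parenthetical remark.
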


\begin{proof}
It is a special case of \cite[Lemma~6.10(2)]{Aschbacher:2008} that $\G$ is constrained and $N_\E(T)$ is a normal subsystem of $\G$; this can however also be easily seen directly. The assertion follows then from Theorem~\ref{Model1}(a),(c). 
\end{proof}

For the remainder of this section we fix a model $G$ for $\G$ and a normal subgroup $N$ of $G$, which is a model for $N_\E(T)$. Notice that this is possible by Lemma~\ref{GN}. We set
\[R^*:=C_S(N).\]
Our next goal will be to show that $R^*$ is the largest subgroup of $S$ containing $N_\E(T)$ in its centralizer. Crucial is the following lemma.

\begin{lemma}\label{CFCG0}
 Let $X\leq C_S(T)$ such that $N_{\m{E}}(T)\subseteq C_\F(X)$. Then every element $\alpha\in\Aut_\m{E}(T)$ extends to an element $\hat{\alpha}\in\Aut_\F(TC_S(T))$ such that $[TC_S(T),\hat{\alpha}]\leq T$ and $\hat{\alpha}|_X=\id_X$.
\end{lemma}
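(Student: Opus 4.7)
The strategy is to combine two extensions of $\alpha$ which are available for different reasons. Applying the extension property of the normal subsystem $\E$ to $\alpha \in \Aut_\E(T)$ produces an extension $\hat{\alpha}_0 \in \Aut_\F(TC_S(T))$ with $\hat{\alpha}_0|_T = \alpha$ and $[TC_S(T), \hat{\alpha}_0] \leq T$. On the other hand, since $\alpha \in \Aut_{N_\E(T)}(T)$, the hypothesis $N_\E(T) \subseteq C_\F(X)$ provides $\tilde{\alpha} \in \Aut_\F(TX)$ with $\tilde{\alpha}|_T = \alpha$ and $\tilde{\alpha}|_X = \id_X$. Neither extension alone enjoys all three required properties, so the proof must merge them.

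To carry out the merging I would pass to the model. By Lemma~\ref{GN}, take $G$ a model for $\G := N_{N_\F(T)}(TC_S(T))$ and $N \unlhd G$ a model for $N_\E(T)$. Writing $\alpha = c_n|_T$ for some $n \in N$ and setting $M := \{h \in G : [TC_S(T), h] \leq T\}$, the construction of $\hat{\alpha}$ becomes the group-theoretic problem of finding $g \in G$ lying simultaneously in $nC_G(T)$ (so $c_g|_T = \alpha$), in $C_G(X)$ (so $c_g|_X = \id_X$), and in $M$ (so $[TC_S(T), c_g] \leq T$). A clean preliminary observation is that $N \leq M$: since $N$ models $N_\E(T)$ with $O_p(N) = T$ and $C_N(T) = T$, we have $N \cap C_G(T) = Z(T)$, and combined with $T \unlhd G$ this forces $[C_S(T), N] \leq N \cap TC_S(T) = T$, so $N$ acts trivially on $TC_S(T)/T$. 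Hence $n$ itself already satisfies the $T$- and $M$-conditions, and $\hat{\alpha}_0$ may be taken to be $c_n|_{TC_S(T)}$. It therefore remains only to multiply $n$ by a suitable $c \in C_G(T) \cap M$ so that $nc$ centralises $X$.

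The main obstacle, and the technical heart of the proof, is precisely this last adjustment. The hypothesis applied to $\alpha$ supplies some $c' \in C_G(T)$ with $nc' \in C_G(X)$, but a priori $c'$ need not lie in $M$. Correcting $c'$ within its coset modulo $C_G(TX)$ so that the corrected element also acts trivially on the abelian quotient $TC_S(T)/T$ will require a careful analysis of the commutator homomorphism $x \mapsto [x, n]$ from $X$ to $Z(T)$, together with the induced action of $C_G(T)$ on $TC_S(T)/T$. This is the step I expect to demand the most care and where the interplay between the extension property of $\E$ and the centraliser hypothesis has to be fully exploited.
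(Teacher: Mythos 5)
Your proposal is not a complete proof: the two key steps are either unjustified or not carried out. First, the claim that the hypothesis ``supplies some $c'\in C_G(T)$ with $nc'\in C_G(X)$'' is circular. The hypothesis $N_\E(T)\subseteq C_\F(X)$ gives a morphism $\tilde{\alpha}\in\Aut_\F(TX)$ with $\tilde{\alpha}|_T=\alpha$ and $\tilde{\alpha}|_X=\id_X$, but this is a morphism of $\F$, not of $\G=N_{N_\F(T)}(TC_S(T))=\F_S(G)$. To realize $\tilde{\alpha}$ as $c_g|_{TX}$ for some $g\in G$ you must first know that $\tilde{\alpha}$ is a morphism of $\G$, i.e.\ that it extends to an automorphism of $TX\cdot TC_S(T)=TC_S(T)$ -- which is essentially the conclusion you are trying to prove. (Indeed, if you form $\delta:=(c_n|_{TX})^{-1}\circ\tilde{\alpha}$, you find $\delta|_T=\id_T$ and $[TX,\delta]\leq T$, so $\delta$ is a $p$-element by \cite[Lemma~A.2]{Aschbacher/Kessar/Oliver:2011}; to convert it into conjugation by an element of $S$ you would need $TX$ to be fully normalized, which it need not be. This is exactly the obstruction the paper's argument is designed to overcome.) Second, even granting the existence of $c'$, the ``correction'' of $c'$ modulo $C_G(TX)$ so that it also acts trivially on $TC_S(T)/T$ is precisely the technical heart of the lemma, and you explicitly defer it rather than prove it. A proof cannot end where the difficulty begins. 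Your preliminary observation that $[TC_S(T),N]\leq T$ (so that $c_n$ already gives a valid $\hat{\alpha}_0$ ignoring $X$) is correct and is indeed used later in the paper, but it does not address either gap.

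For comparison, the paper's proof of this lemma deliberately avoids the model and works entirely inside $\F$: it considers the set $\Phi_X$ of pairs $(Y,\phi)$ with $TX\leq Y\leq TC_S(T)$, $\phi$ a $p'$-element of $\Aut_\F(Y)$ satisfying $\phi|_T\in\Aut_\E(T)$, $[Y,\phi]\leq T$, $\phi|_X=\id_X$, and $\phi$ not extendable to $TC_S(T)$ with the required commutator condition. One first shows no such $Y$ can be fully $\F$-normalized (comparing $\phi$ with the extension of $\phi|_T$ provided by the extension property of $\E\unlhd\F$; the discrepancy lies in the normal $p$-subgroup $C_{\Aut_\F(Y)}(Y/T)\cap C_{\Aut_\F(Y)}(T)$, hence in $\Aut_S(Y)$ by full automization). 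One then takes $(Y,\phi)$ with $|Y|$ maximal, conjugates $Y$ into fully normalized position via some $\beta\in\Hom_\F(N_S(Y),S)$, extends $\phi^\beta$ there, and pulls the extension back to $N_{TC_S(T)}(Y)>Y$, contradicting maximality. The model only enters afterwards, in Lemma~\ref{FirstCharacterization}, where the automorphism of the full group $TC_S(T)$ produced by this lemma is automatically a morphism of $\G$ and can therefore be realized in $G$. If you wish to salvage your approach, you would have to import this conjugation-to-fully-normalized induction anyway, at which point the detour through the model buys nothing.
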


\begin{proof}
Let $\Phi_X$ be the set of all pairs $(Y,\phi)$ such that $TX\leq Y\leq TC_S(T)$, $\phi\in\Aut_\F(Y)$ is a $p^\prime$-element, $\phi|_T\in\Aut_\E(T)$, $[Y,\phi]\leq T$, $\phi|_X=\id_X$ and $\phi$ does not extend to an element $\hat{\phi}\in\Aut_\F(TC_S(T))$ with $[TC_S(T),\hat{\phi}]\leq T$. 

\smallskip

\emph{Step~1:} We show that, for any pair $(Y,\phi)\in\Phi_X$, $Y$ is not fully $\F$-normalized. To prove this by contradiction, let $(Y,\phi)\in\Phi_X$ with $Y\in\F^f$. By definition of $\Phi_X$, we have $\phi|_T\in\Aut_\E(T)$. So by definition of a normal subsystem, $\phi|_T$ extends to $\psi\in \Aut_\F(TC_S(T))$ with $[TC_S(T),\psi]\leq T$. Then in particular, $[Y,\psi]\leq T\leq Y$, so $\psi$ normalizes $Y$. So $\phi\psi^{-1}\in \Aut_\F(Y)$. As $\phi$ and $\psi$ both centralize $Y/T$, the composition $\phi\psi^{-1}$ centralizes $Y/T$. Moreover, as $\psi$ extends $\phi|_T$, we have $\phi\psi^{-1}|_T=\id_T$. So $\phi\psi^{-1}\in C:=C_{\Aut_\F(Y)}(Y/T)\cap C_{\Aut_\F(Y)}(T)$.  By \cite[Lemma~A.2]{Aschbacher/Kessar/Oliver:2011}, $C$ is a $p$-group. As $T$ is strongly closed, $T$ is normalized by $\Aut_\F(Y)$ and thus $C$ is a normal $p$-subgroup of $\Aut_\F(Y)$. Since $Y$ is fully normalized, $\Aut_S(Y)$ is a Sylow $p$-subgroup of $\Aut_\F(Y)$. So $\phi\psi^{-1}\in C\leq \Aut_S(Y)$. Thus, there exists $s\in C_S(Y/T)\cap C_S(T)$ with $\phi\psi^{-1}=c_s|_Y$. Then $\phi=(c_s|_Y)\circ \psi$ extends to $\chi:=(c_s|_{TC_S(T)})\circ \psi\in\Aut_\F(TC_S(T))$. Moreover, as $[TC_S(T),\psi]\leq T$, the automorphism of $TC_S(T)/T$ induced by $\chi$ equals the one induced by $c_s$. As $\chi|_Y=\phi$ and $\phi$ is of $p^\prime$-order, there exists $\hat{\phi}\in\<\chi\>$ such that $\hat{\phi}$ is of $p^\prime$-order and $\hat{\phi}|_Y=\phi$. Since the automorphism of $TC_S(T)/T$ induced by $\chi$ has $p$-power order, it follows $[TC_S(T),\hat{\phi}]\leq T$.  This contradicts $(Y,\phi)\in\Phi_X$.

\smallskip

\emph{Step~2:} Assuming the assertion is wrong, we show that we reach a contradiction. Note that the assertion is clearly true for every element $\alpha\in\Inn(T)$. Moreover, $\Inn(T)$ is a Sylow $p$-subgroup of $\Aut_\m{E}(T)$ by the Sylow axiom. So assuming the assertion is wrong, there exists a $p^\prime$-element $\alpha\in\Aut_\m{E}(T)$ which does not extend to a $p^\prime$-element $\hat{\alpha}\in\Aut_\F(TC_S(T))$ with $[TC_S(T),\hat{\alpha}]\leq T$ and $\hat{\alpha}|_X=\id_X$. Since $N_\E(T)\subseteq C_\F(X)$, there exists $\psi\in\Aut_\F(TX)$ with $\psi|_T=\alpha$ and $\psi|_X=\id_X$. Then the order of $\psi$ equals the order of $\alpha$. So $(TX,\psi)\in\Phi_X$ and $\Phi_X\neq\emptyset$. Thus we may choose $(Y,\phi)\in\Phi_X$ such that $|Y|$ is maximal. Let $\beta\in \Hom_\F(N_S(Y),S)$ such that $Y^\beta$ is fully normalized. 

\smallskip

By Lemma~\ref{EasyCentralizer}(c), we have $X^\beta\leq C_S(T)$ and $N_\E(T)\subseteq C_\F(X^\beta)$. As $\phi\in\Aut_\F(Y)$ is a $p^\prime$-element with $[Y,\phi]\leq T$, it follows that $\phi^\beta\in\Aut_\F(Y^\beta)$ is a $p^\prime$-element and $[Y^\beta,\phi^\beta]\leq T^\beta=T$. Moreover, as $\phi|_T\in\Aut_\E(T)$ and $\E$ is normal in $\F$, $(\phi^\beta)|_T=(\phi|_T)^\beta\in\Aut_\E(T)$. As $\phi|_X=\id_X$, we have $(\phi^\beta)|_{X^\beta}=\id_{X^\beta}$. However, since $Y^\beta$ is fully normalized, Step~1 gives that $(Y^\beta,\phi^\beta)$ is not a member of $\Phi_{X^\beta}$. So $\phi^\beta$ extends to $\xi\in\Aut_\F(TC_S(T))$ with $[TC_S(T),\xi]\leq T$. Note that, by the definition of $\Phi_X$, $Y$ must be a proper subgroup of $TC_S(T)$. So $Y<\tilde{Y}:=N_{TC_S(T)}(Y)=TN_{C_S(T)}(Y)$. Observe that 
$T=T^\beta\leq \tilde{Y}^\beta\leq TC_S(T)$ and so $[\tilde{Y}^\beta,\xi]\leq [TC_S(T),\xi]\leq 
T\leq \tilde{Y}^\beta$. In particular, $\xi$ normalizes $\tilde{Y}^\beta$. We obtain that $\tilde{\phi}:=(\xi|_{\tilde{Y}^\beta})^{\beta^{-1}}\in\Aut_\F(\tilde{Y})$ with $[\tilde{Y},\tilde{\phi}]\leq T$ and $\tilde{\phi}|_Y=\phi$. As $\phi$ is a $p^\prime$-element, there is $m\in\mathbb{N}$ such that $\tilde{\phi}^m$ is a $p^\prime$ element extending $\phi$. Since $(Y,\phi)\in\Phi_X$, it follows that $(\tilde{Y},\tilde{\phi}^m)\in\Phi_X$. However, this contradicts the maximality of $|Y|$. Therefore, the assertion must be true. 
\end{proof}

\begin{lemma}\label{FirstCharacterization}
 Let $X\leq C_S(T)$. Then $N_{\m{E}}(T)\subseteq C_\F(X)$ if and only if $X\leq R^*$. In particular, $N_{\m{E}}(T)\subseteq C_\F(R^*)$ and $R^*$ is with respect to inclusion the largest subgroup of $S$ containing $N_{\m{E}}(T)$ in its centralizer in $\F$. 
\end{lemma}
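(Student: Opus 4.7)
The plan is to prove the two directions separately: the forward direction uses the model $N$ directly, while the reverse direction combines Lemma~\ref{CFCG0} with two applications of Schur--Zassenhaus.

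For the forward direction ($X \leq R^* \Rightarrow N_\E(T) \subseteq C_\F(X)$): since $N$ is a model for $N_\E(T)$, every morphism $\phi \in \Hom_{N_\E(T)}(P,Q)$ equals $c_n|_P$ for some $n \in N$ with $P^n \leq Q$. Because $X \leq R^* = C_S(N)$, the element $n$ centralizes $X$, so $c_n|_{PX}\colon PX \to QX$ is a morphism in $\G = \F_S(G) \subseteq \F$ which extends $\phi$ and fixes $X$ pointwise. Thus $\phi$ is a morphism of $C_\F(X)$, giving $N_\E(T) \subseteq C_\F(X)$.

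For the reverse direction, assume $N_\E(T) \subseteq C_\F(X)$; we want $[X,N]=1$. Since $T = O_p(N)$ is a normal Sylow-$p$ of the model $N$, Schur--Zassenhaus provides $N = T \rtimes H$ for some $p'$-Hall $H$. From $X \leq C_S(T)$ we have $[X,T] = 1$, so it suffices to show $[X,h] = 1$ for each $h \in H$. Fix such $h$ and set $\alpha := c_h|_T \in \Aut_N(T) = \Aut_\E(T)$, a $p'$-element. Lemma~\ref{CFCG0} (after replacing $\hat\alpha$ by its $p'$-part, which is a power of $\hat\alpha$ and so inherits all the required properties) yields a $p'$-element $\hat\alpha \in \Aut_\F(TC_S(T))$ extending $\alpha$, with $[TC_S(T),\hat\alpha] \leq T$ and $\hat\alpha|_X = \id_X$. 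Since $\hat\alpha$ normalizes $T$, it lies in $\Aut_\G(TC_S(T)) = \Aut_G(TC_S(T))$, so $\hat\alpha = c_g|_{TC_S(T)}$ for some $p'$-element $g \in G$. Write $g = ch$ with $c := gh^{-1} \in C_G(T)$.

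The main obstacle is to show $c \in Z(T)$, which will place $g$ back inside $N$. A key preliminary is $N \cap TC_S(T) = T$: indeed, $N \cap C_S(T) \leq C_N(T) = C_N(O_p(N)) \leq O_p(N) = T$ by the model property. Since $N$ and $TC_S(T)$ are both normal in $G$, this gives $[TC_S(T), N] \leq N \cap TC_S(T) = T$; in particular $[TC_S(T), h] \leq T$, and combined with $[TC_S(T), g] \leq T$ also $[TC_S(T), c] \leq T$. Together with $c|_T = \id_T$, \cite[Lemma~A.2]{Aschbacher/Kessar/Oliver:2011} forces the image of $c$ in $\Aut_G(TC_S(T)) = G/Z(TC_S(T))$ to be a $p$-element, and since $Z(TC_S(T))$ is a $p$-group, $c$ itself is a $p$-element of $G$. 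Moreover $[c, h] \in C_G(T) \cap N = Z(T)$ (both $C_G(T)$ and $N$ are normal in $G$), so in $\bar G := G/Z(T)$ the images $\bar c$ and $\bar h$ commute. Since $\bar c$ has $p$-power order, $\bar h$ is $p'$, and $\bar g = \bar c\bar h$ is $p'$, the commuting coprime-order decomposition forces $\bar c = 1$, i.e., $c \in Z(T)$.

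Hence $g = ch \in N$ is a $p'$-element with $g \equiv h \pmod T$. Now apply Schur--Zassenhaus inside $K := T\langle h \rangle = T \rtimes \langle h \rangle \leq N$: both $\langle g \rangle$ and $\langle h \rangle$ are $p'$-Hall subgroups of $K$, so they are $T$-conjugate, and matching images modulo $T$ yields $g = h^t$ for some $t \in T$. Because $X \leq C_S(T)$ gives $T \leq C_G(X)$, both $t$ and $g$ centralize $X$, so $h = g^{t^{-1}} \in C_G(X)$. This proves $[X, h] = 1$, whence $[X, N] = 1$ and $X \leq R^*$, completing the argument.
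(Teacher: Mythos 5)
Your proof is correct and follows essentially the same route as the paper: the forward direction via conjugation maps in the model $N$, and the reverse direction via Lemma~\ref{CFCG0}, realizing the extension $\hat\alpha$ as $c_g$ for a $p'$-element $g\in G$, and then showing $g$ differs from the chosen $p'$-element of $N$ by an element of $T$. The only divergence is in the bookkeeping of that last step (the paper puts $gn^{-1}$ into the normal $p$-subgroup $C_G(TC_S(T)/T)\cap C_G(T)\leq S$ and uses $O^p(SN)\leq N$, whereas you show $c=gh^{-1}$ is a $p$-element and run a commuting-coprime-order argument modulo $Z(T)$), and your closing Schur--Zassenhaus conjugacy step is redundant: once $c\in Z(T)\leq T\leq C_G(X)$, you already have $h=c^{-1}g\in C_G(X)$ directly.
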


\begin{proof}
Let $X\leq C_S(T)$. If $[X,N]=1$ then clearly $N_\E(T)=\F_T(N)\subseteq C_\G(X)\subseteq C_\F(X)$. So assume now that $N_\E(T)\subseteq C_\F(X)$. We need to show that $[X,N]=1$. As $N$ is a model for $N_\E(T)$, $T$ is a Sylow $p$-subgroup of $N$. As $[X,T]=1$, it is enough to show that every $p^\prime$-element of $N$ centralizes $X$.

\smallskip

Let $n\in N$ be a $p^\prime$-element. Then $\alpha=c_n|_T\in\Aut_\E(T)$ is a $p^\prime$-automorphism. By Lemma~\ref{CFCG0}, $\alpha$ extends to $\hat{\alpha}\in\Aut_\F(TC_S(T))$ with $\hat{\alpha}|_X=\id_X$ and $[TC_S(T),\hat{\alpha}]\leq T$. As $\alpha$ is a $p^\prime$-element, replacing $\hat{\alpha}$ by a suitable power of $\hat{\alpha}$, we may assume that $\hat{\alpha}$ is a $p^\prime$-element. Note that $\hat{\alpha}$ is a morphism in $\G$. So there exists $g\in G$ with $\hat{\alpha}=c_g|_{TC_S(T)}$ and again, $g$ can be chosen to be a $p^\prime$-element. Then $gn^{-1}\in C_G(T)$, $g\in C_G(X)$ and $[TC_S(T),g]\leq T$. By Theorem~\ref{Model1}(b), $TC_S(T)$ is a normal subgroup of $G$ with $C_G(TC_S(T))\leq TC_S(T)$. So $C:=C_G(TC_S(T)/T)\cap C_G(T)$ is by \cite[Lemma~A.2]{Aschbacher/Kessar/Oliver:2011} a normal $p$-subgroup of $G$. Hence, $C\leq S$. Note also that $[TC_S(T),n]\leq TC_S(T)\cap N\leq S\cap N=T$ and thus $[TC_S(T),gn^{-1}]\leq T$. This yields $gn^{-1}\in C\leq S$ and so $g\in SN$. As $g$ is a $p^\prime$-element, it follows $g\in O^p(SN)\leq N$ and therefore $ng^{-1}\in N$. By Theorem~\ref{Model1}(b), we have $C_N(T)\leq T$. Hence, $ng^{-1}\in C_N(T)\leq T\leq C_G(X)$. As $g\in C_G(X)$, we can conclude that $n\in C_G(X)$. This shows that $[X,N]=1$ as required.
\end{proof}

%\begin{Thm}\label{Main0}
%Setting $C_S(\m{E}):=\<\X\>$, the subgroup $C_S(\E)$ is the unique largest member of $\X$. Moreover, $C_S(\E)$ is the largest weakly closed and the largest strongly closed subgroup of $R^*$.
%\end{Thm}

\begin{proof}[Proof of Theorem~\ref{MainCSE}]
Part (b) of Theorem~\ref{MainCSE} follows from Lemmas~\ref{GN} and \ref{FirstCharacterization}. We will now prove parts (a) and (c). To ease notation set $R:=\<\X\>$. By Lemma~\ref{XInvariant}, $\X$ is invariant under taking $\F$-conjugates. In particular $R$ is weakly closed. Note moreover, that for every $X\in\X$, $N_\E(T)\subseteq \E\subseteq C_\F(X)$ and thus, by Lemma~\ref{FirstCharacterization}, $X\leq R^*$.  This implies $R\leq R^*$ and so $\Aut_\E(T)=\Aut_N(T)\subseteq C_\G(R)\subseteq C_\F(R)$. Hence, by Proposition~\ref{WeaklyClosedCentralized}, $R$ is a member of $\X$. So every subgroup of $R$ is an element of $\X$, and $R$ is the unique largest member of $\X$. Since $\X$ is invariant under taking $\F$-conjugates, it follows in particular that $R$ is strongly closed. Note that every strongly closed subgroup is weakly closed. Moreover, by Proposition~\ref{WeaklyClosedCentralized}, every weakly closed subgroup of $R^*$ is in $\X$ and thus contained in $R$. So it follows that $R$ is the largest strongly closed and the largest weakly closed subgroup of $R^*$. This proves Theorem~\ref{MainCSE}.
\end{proof}

\section{The proof of Proposition~\ref{FocProp}}

In this section we prove Proposition~\ref{FocProp}. Assume the assertion is false and choose a saturated fusion system $\F$ over $S$ which is a minimal counterexample to Proposition~\ref{FocProp}. Let $\E$ be a normal subsystem of $\F$ over $T$ such that $\foc(C_\F(T))\not\leq C_S(\E)$. We proceed in four steps to reach a contradiction.

\smallskip

\emph{Step~1:} We show that there exists $Q\in\E^{cr}\cap \F^f$ and $P\leq C_S(T)$ such that $\Aut_\E(Q)\not\subseteq C_\F([P,\Aut_{C_\F(T)}(P)])$. To see this note that, as $\F$ is a counterexample and $\foc(C_\F(T))=\<[P,\Aut_{C_\F(T)}(P)]\colon P\leq C_S(T)\>$, there exists $P_0\leq C_S(T)$ such that $\hat{P}_0:=[P_0,\Aut_{C_\F(T)}(P_0)]\not\leq C_S(\m{E})$. By definition of $C_S(\E)$, we have then $\m{E}\not\subseteq C_\F(\hat{P}_0)$. By Alperin's fusion theorem, $\E$ is generated by the $\E$-automorphism groups of the elements of $\E^{cr}\cap \E^f$. So there exists $Q_0\in\E^{cr}$ such that $\Aut_\E(Q_0)$ is not in the centralizer in $\F$ of $\hat{P}_0$. Choose such $Q_0$ of maximal order. If $Q_0=T$ then $Q_0\in \F^f$, which implies that $Q=Q_0$ and $P=P_0$ have the desired properties. Assume now that $Q_0<T$. Then $Q_0<N_T(Q_0)$. Let $\phi\in\Hom_\F(N_S(Q_0),S)$ such that $Q:=Q_0^\phi\in\F^f$. By the Frattini property, we can write $\phi|_{N_T(Q_0)}=\phi_0\alpha$ where $\phi_0\in\Hom_\E(N_T(Q_0),T)$ and $\alpha\in\Aut_\F(T)$. Again by Alperin's fusion theorem, $\phi_0$ is the product of restrictions of $\E$-automorphisms of elements of $\E^{cr}$ whose order is greater or equal to $|N_T(Q_0)|$. As $Q_0<N_T(Q_0)$, the maximality of $|Q_0|$ yields that $\phi_0$ is a morphism in $C_\F(\hat{P}_0)$. So $\phi_0$ extends to $\hat{\phi}_0\in\Aut_\F(N_T(Q_0)\hat{P}_0)$ with $\hat{\phi}_0|_{\hat{P}_0}=\id_{\hat{P}_0}$. Since $T$ is fully $\F$-normalized, $\alpha$ extends by the extension axiom to $\hat{\alpha}\in\Aut_\F(TC_S(T))$. As $\hat{\alpha}$ acts on $T$, we have $P:=P_0^{\hat{\alpha}}\leq C_S(T)$. Moreover, $\Aut_{C_\F(T)}(P_0)^{\hat{\alpha}}=\Aut_{C_\F(T)}(P)$ and $\hat{P}_0^{\hat{\alpha}}=[P,\Aut_{C_\F(T)}(P)]$. Hence,  setting $\hat{\phi}=\hat{\phi}_0\hat{\alpha}$, we have $\hat{P}_0^{\hat{\phi}}=\hat{P}_0^{\hat{\alpha}}=[P,\Aut_{C_\F(T)}(P)]$. We have chosen $Q_0$ such that $\Aut_\E(Q_0)\not\subseteq C_\F(\hat{P}_0)$. As $Q=Q_0^\phi=Q_0^{\hat{\phi}}$, it follows thus from Lemma~\ref{EasyCentralizer}(a) that $\Aut_{\E}(Q)\not\subseteq C_\F(\hat{P}_0^{\hat{\phi}})=C_\F([P,\Aut_{C_\F(T)}(P)])$. Recall that $\phi$ was chosen such that $Q=Q_0^\phi\in\F^f$. Moreover, by Lemma~\ref{Wellknown}, we have $Q\in\m{E}^{cr}$. So Step~1 is complete. 

\smallskip

For the remainder of the proof we will fix $Q\in\E^{cr}\cap \F^f$ and $P\leq C_S(T)$ such that $\Aut_\E(Q)$ is not contained in the centralizer in $\F$ of $\hat{P}:=[P,\Aut_{C_\F(T)}(P)]$. Note that this is possible by Step~1.

\smallskip

\emph{Step~2:} We show that $Q$ is normal in $\F$. Suppose $Q$ is not normal in $\F$, i.e. $N_\F(Q)$ is a proper subsystem of $\F$. By Lemma~\ref{LocalNormalSubsystems} and as $Q\in\F^f$, $N_\F(Q)$ is saturated and $N_\E(Q)$ is a normal subsystem of $N_\F(Q)$ over $N_T(Q)$. As $\F$ is a minimal counterexample to our assertion, we conclude using Theorem~\ref{MainCSE}(a) that $N_\E(Q)\subseteq C_{N_\F(Q)}(\foc(C_{N_\F(Q)}(N_T(Q)))$. As $Q\leq T$, we have $C_\F(T)\subseteq C_{N_\F(Q)}(N_T(Q))$ and thus $\hat{P}\leq \foc(C_\F(T))\leq \foc (C_{N_\F(Q)}(N_T(Q)))$. In particular,  $\Aut_\E(Q)$ is contained in the centralizer in $\F$ of $\hat{P}$, contradicting the choice of $Q$ and $P$. Hence, $\F=N_\F(Q)$ and $Q$ is normal in $\F$. This finishes Step~2. Set
\[X:=QP.\]

\emph{Step~3:} We show that we can choose $P$ such that $X$ is fully normalized in $\F$. For the proof of this, let $\psi\in\Hom_\F(N_S(X),S)$ such that $X^\psi\in\F^f$. By Step~2, we have $Q\unlhd S$. Since $P\leq C_S(T)$, it follows that $T\leq N_S(X)$. Hence, as $T$ is strongly closed, $\psi$ acts on $T$. This implies that $P^\psi\leq C_S(T)$, $\Aut_{C_\F(T)}(P)^\psi=\Aut_{C_\F(T)}(P^\psi)$ and $\hat{P}^\psi=[P^\psi,\Aut_{C_\F(T)}(P^\psi)]$. Since Step~2 gives $Q^\psi=Q$, it follows from Lemma~\ref{EasyCentralizer}(a) that $\Aut_\E(Q)=\Aut_\E(Q^\psi)$ is not contained in $C_\F(\hat{P}^\psi)=C_\F([P^\psi,\Aut_{C_\F(T)}(P^\psi)])$. As $X^\psi=Q(P^\psi)$, replacing $P$ by $P^\psi$, we may assume that $X\in\F^f$. We will make this assumption from now on.

\smallskip

\emph{Step~4:} We now derive the final contradiction. Set $\F_X:=N_{N_\F(X)}(XC_S(X))$. Note that as $Q\in\m{E}^c$ we have $P\cap T\leq Z(T)\leq Q$ and thus $X\cap T=Q(P\cap T)=Q$. So by Lemma~\ref{PropHelp} and since we assume $X\in\F^f$, $\F_X$ is a constrained saturated fusion system and $N_\E(Q)$ is a normal subsystem of $\F_X$. So by Theorem~\ref{Model1}(a),(c), we can pick a model $G_X$ for $\F_X$ and a normal subgroup $N_X$ of $G_X$ such that $N_X$ is a model for $N_\E(Q)$, i.e. $N_X\cap N_S(X)=T$ and $N_\E(Q)=\F_T(N_X)$. As $X$ is normal in $\F_X$, $X$ is also normal in $G_X$ by Theorem~\ref{Model1}(b). As $T$ is strongly closed in $\F$, it follows that $Q=X\cap T$ is normal in $G_X$. As $Q$ is a centric normal subgroup of $N_\E(Q)$ and $N_X$ is a model for $N_\E(Q)$, Theorem~\ref{Model1}(b) gives $C_{N_X}(Q)\leq Q$. Thus $[N_X,C_{G_X}(Q)]\leq C_{N_X}(Q)\leq Q\leq T$. As $P\leq C_S(T)$, this implies $[N_X,C_{G_X}(Q),P]=1$. Moreover, $[P,N_X]\leq X\cap N_X=X\cap T=Q$ and thus $[P,N_X,C_{G_X}(Q)]=1$. The Three Subgroup Lemma yields now $[C_{G_X}(Q),P,N_X]=1$, i.e. $[P,C_{G_X}(Q)]$ is centralized by $N_X$. 

\smallskip

Let $\gamma\in\Aut_{C_\F(T)}(P)$. As $Q\leq T$ and $X=PQ$, $\gamma$ extends to an element $\hat{\gamma}\in\Aut_\F(X)$ with $\hat{\gamma}|_Q=\id_Q$. As $X$ is fully normalized, every element of $\Aut_\F(X)$ extends by the extension axiom to an element of $\Aut_\F(XC_S(X))$ and is thus a morphism in $\F_X$. In particular, $\hat{\gamma}$ is a morphism in $\F_X$. Thus $\hat{\gamma}=c_g|_X$ for some $g\in G_X$. As $\hat{\gamma}|_Q=\id_Q$, it follows $g\in N_{G_X}(Q)$. Hence, $\gamma=\hat{\gamma}|_P=c_g|_P$ is realized by an element of $C_{G_X}(Q)$. Since $\gamma$ was arbitrary, this shows $\hat{P}=[P,\Aut_{C_\F(T)}(P)]\leq [P,C_{G_X}(Q)]$. As we have seen above, this means that $\hat{P}$ is centralized by $N_X$. Thus, $N_\E(Q)=\F_T(N_X)\subseteq C_{\F_X}(\hat{P})\subseteq C_\F(\hat{P})$ and so $\Aut_\E(Q)\subseteq C_\F(\hat{P})$. This contradicts the choice of $P$ and $Q$, which proves that our initial assumption was false and the proposition holds.

\section{The centralizer of $\E$ in $\F$}

\textbf{Throughout this section suppose $\F$ is a saturated fusion system over $S$ and $\m{E}$ is a normal subsystem of $\F$ over $T$.}

\smallskip

We will prove Theorem~\ref{MainCFE}.

\smallskip

Recall from the introduction that, for any subgroup $R$ of $S$ and any collection $\C$ of $\F$-morphisms between subgroups of $R$, we write $\<\C\>_R$ for the smallest subsystem of $\F$ over $R$ containing every morphism in $\C$. 

\smallskip

We will use the fact that, for any subgroup $R$ of $S$ with $\hyp(\F)\leq R$, the subsystem 
\[\F_R:=\<O^p(\Aut_\F(P))\colon P\leq R \>_R\]
of $\F$ is saturated. Moreover, $\F_R$ is normal in $\F$ if and only if $R$ is normal in $S$; see \cite[Theorem~7.4]{Aschbacher/Kessar/Oliver:2011}. 

\smallskip

Recall that we have defined $C_S(\m{E})$. Moreover, we proved in Theorem~\ref{MainCSE}(a) that  $C_S(\m{E})$ is strongly closed in $\F$ and thus in $C_\F(T)$. Note also that Proposition~\ref{FocProp} gives $\hyp(C_\F(T))\leq \foc(C_\F(T))\leq C_S(\m{E})$. So by the above mentioned result, the subsystem 
\[C_\F(\m{E})=\<O^p(\Aut_{C_\F(T)}(P))\colon P\leq C_S(\m{E})\>_{C_S(\m{E})}\]
is a normal subsystem of $C_\F(T)$. We will use this to show that $C_\F(\m{E})$ is indeed normal in $\F$.  

\begin{lemma}\label{ShowWeaklyNormal}
The subsystem $C_\F(\m{E})$ is weakly normal in $\F$.
\end{lemma}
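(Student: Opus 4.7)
The saturation of $C_\F(\E)$ is immediate from the general result cited just above the lemma (Theorem~I.7.4 of \cite{Aschbacher/Kessar/Oliver:2011}) applied inside the saturated fusion system $C_\F(T)$: Proposition~\ref{FocProp} gives $\hyp(C_\F(T))\leq\foc(C_\F(T))\leq C_S(\E)$, and $C_S(\E)$ is strongly closed in $\F$ by Theorem~\ref{MainCSE}(a), hence normal in the Sylow $C_S(T)$ of $C_\F(T)$; so $C_\F(\E)=(C_\F(T))_{C_S(\E)}$ is saturated and in fact normal in $C_\F(T)$.

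To verify $\F$-invariance, I plan to check the two requirements of Proposition~\ref{Finvariant}(c): (I) $(C_\F(\E))^\alpha=C_\F(\E)$ for each $\alpha\in\Aut_\F(C_S(\E))$, and (II) $\Aut_{C_\F(\E)}(P)\unlhd\Aut_\F(P)$ for each $P\leq C_S(\E)$ with $P\in\F^f$. The key for (I) is that $\alpha$ acts on all of $C_S(\E)$ and therefore can conjugate every generator of $C_\F(\E)$ simultaneously: $\alpha$ extends via the extension axiom (using $T\leq N_\alpha$ since $[T,C_S(\E)]=1$) to some $\hat\alpha\in\Aut_\F(C_S(\E)T)$ with $\hat\alpha|_T\in\Aut_\F(T)$. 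For any $\beta\in O^p(\Aut_{C_\F(T)}(P'))$ with $P'\leq C_S(\E)$, choosing an extension $\hat\beta\in\Aut_\F(P'T)$ with $\hat\beta|_T=\id_T$ and conjugating by $\hat\alpha|_{P'T}$ produces an element of $\Aut_\F(P'^\alpha T)$ that still restricts to $(\hat\alpha|_T)^{-1}\circ\id_T\circ(\hat\alpha|_T)=\id_T$ on $T$ and retains $p'$-order. Hence $\beta^\alpha\in O^p(\Aut_{C_\F(T)}(P'^\alpha))$ is again one of the generators of $C_\F(\E)$, because $P'^\alpha\leq C_S(\E)$ by strong closure. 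So $\alpha$ preserves the generating set, which forces $(C_\F(\E))^\alpha=C_\F(\E)$.

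For (II) I plan a factorization trick reducing everything to (I) together with the $C_\F(T)$-invariance of the normal subsystem $C_\F(\E)\unlhd C_\F(T)$ from the first paragraph. Given $\gamma\in\Aut_\F(P)$ and $\delta\in\Aut_{C_\F(\E)}(P)$, I would extend $\gamma$ to $\hat\gamma\in\Aut_\F(PT)$ (extension axiom, using $[T,P]=1$ and $P\in\F^f$), put $\beta_T:=\hat\gamma|_T\in\Aut_\F(T)$, extend further to $\hat\beta_T\in\Aut_\F(TC_S(T))$, and set $\alpha:=\hat\beta_T|_{C_S(\E)}\in\Aut_\F(C_S(\E))$ (using strong closure of $C_S(\E)$). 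Then $\gamma$ factors as $\gamma=\gamma'\circ\alpha|_{P^{\alpha^{-1}}}$, where $\gamma':=(\hat\gamma\circ\hat\beta_T^{-1})|_P$ has an extension to $PT$ restricting to $\beta_T\circ\beta_T^{-1}=\id_T$ on $T$, so $\gamma'\in\Hom_{C_\F(T)}(P,P^{\alpha^{-1}})$. Consequently $\delta^\gamma=(\delta^{\gamma'})^{\alpha|_{P^{\alpha^{-1}}}}$: the inner factor $\delta^{\gamma'}$ lies in $\Aut_{C_\F(\E)}(P^{\alpha^{-1}})$ by the strong invariance of $C_\F(\E)\unlhd C_\F(T)$, and the outer conjugation by the restriction of $\alpha$ stays in $C_\F(\E)$ by (I). The main obstacle is precisely setting up this clean factorization of $\gamma$---it requires the two nested uses of the extension axiom together with the strong closure of $C_S(\E)$ to produce a global automorphism $\alpha$ of $C_S(\E)$ that separates $\gamma$ into a ``$C_\F(T)$-part'' $\gamma'$ and a ``full-$C_S(\E)$-part'' $\alpha|_{P^{\alpha^{-1}}}$---but once this is in place, (I) and the $C_\F(T)$-invariance of $C_\F(\E)$ finish the job.
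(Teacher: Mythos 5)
Your argument is correct. The saturation step and your condition (I) (invariance under $\Aut_\F(C_S(\E))$) are essentially identical to the paper's: both extend $\alpha$ to $C_S(\E)T$ using $T\leq C_S(C_S(\E))$ and strong closure of $T$, and deduce $O^p(\Aut_{C_\F(T)}(P'))^\alpha=O^p(\Aut_{C_\F(T)}(P'^\alpha))$. Where you genuinely diverge is condition (II). The paper works at the level of automorphism groups: it uses $P\in C_\F(\E)^f$ and the Sylow axiom to write $\Aut_{C_\F(\E)}(P)=\Aut_{C_S(\E)}(P)\,O^p(\Aut_{C_\F(T)}(P))$, shows $\Aut_{C_\F(T)}(P)\unlhd\Aut_\F(P)$, and then runs a Frattini argument $\Aut_\F(P)=O^p(\Aut_{C_\F(T)}(P))N_{\Aut_\F(P)}(\Aut_{C_S(T)}(P))$, checking that each factor normalizes each factor of $\Aut_{C_\F(\E)}(P)$. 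You instead factor each individual $\gamma\in\Aut_\F(P)$ as a $C_\F(T)$-morphism followed by the restriction of a global element of $\Aut_\F(C_S(\E))$, and then combine the strong invariance of $C_\F(\E)$ in $C_\F(T)$ (available from the first paragraph) with your condition (I). Both routes are sound; yours is conceptually cleaner in that it reduces (II) formally to ``invariant in $C_\F(T)$ plus invariant under $\Aut_\F(C_S(\E))$'', at the cost of leaning on the full normality of $C_\F(\E)$ in $C_\F(T)$, whereas the paper's Frattini computation is more hands-on but produces the identity $\Aut_{C_\F(\E)}(P)=\Aut_{C_S(\E)}(P)O^p(\Aut_{C_\F(T)}(P))$, which it reuses later (in the proof of Proposition~\ref{Coincide}). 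Two small points to tidy in a final write-up: showing that conjugation by $\alpha$ sends generators to generators gives a priori only $C_\F(\E)^\alpha\subseteq C_\F(\E)$, so apply the same to $\alpha^{-1}$ (or count) to get equality; and in (II) note explicitly that $P\in\F^f$ is what licenses the extension of $\gamma$ to $PT$ via the extension axiom.
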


\begin{proof}
As explained above, $C_\F(\m{E})$ is normal in $C_\F(T)$, and thus in particular saturated. Therefore, it remains only to show that $C_\F(\m{E})$ is $\F$-invariant. Recall that $R:=C_S(\m{E})$ is strongly closed in $\F$. Moreover, if $\alpha\in\Aut_\F(R)$, then $T\leq C_S(R)\leq N_\alpha$. So by the extension axiom and as $T$ is strongly closed, $\alpha$ extends to an element of $\Aut_\F(RT)$ which acts on $T$. Therefore, for every $P\leq R$, we have $O^p(\Aut_{C_\F(T)}(P))^\alpha=O^p(\Aut_{C_\F(T)}(P^\alpha))$. This implies $C_\F(\m{E})^\alpha=C_\F(\m{E})$. 

\smallskip

Let now $P\leq C_S(\m{E})$ with $P\in\F^f$. By Proposition~\ref{Finvariant}, we only need to prove condition (c) in that proposition. So it remains to show that
\begin{equation*}\label{star}\tag{$*$}
\Aut_{C_\F(\m{E})}(P)\unlhd \Aut_\F(P).
\end{equation*}
As $P\in\F^f$, Lemma~\ref{FfEf} gives $P\in C_\F(\m{E})^f$. Thus, by the Sylow axiom, $\Aut_R(P)\in\Syl_p(\Aut_{C_\F(\m{E})}(P))$. Since $O^p(\Aut_{C_\F(T)}(P))\leq \Aut_{C_\F(\m{E})}(P)\leq \Aut_{C_\F(T)}(P)$, it follows that 
\[\Aut_{C_\F(\m{E})}(P)=\Aut_R(P)O^p(\Aut_{C_\F(T)}(P)).\]
For every $\alpha\in\Aut_\F(P)$, we have $T\leq C_S(P)\leq N_\alpha$. So by the extension axiom and since $T$ is strongly closed, every element of $\Aut_\F(P)$ extends to an element of $\Aut_\F(PT)$ acting on $T$. This implies that $\Aut_{C_\F(T)}(P)\unlhd \Aut_\F(P)$. As $P\in\F^f$, the Sylow axiom gives $\Aut_S(P)\in\Syl_p(\Aut_\F(P))$. So $\Aut_{C_S(T)}(P)=\Aut_S(P)\cap \Aut_{C_\F(T)}(P)$ is a Sylow $p$-subgroup of $\Aut_{C_\F(T)}(P)$, and thus $\Aut_{C_\F(T)}(P)=O^p(\Aut_{C_\F(T)}(P))\Aut_{C_S(T)}(P)$. The Frattini argument yields 
\begin{eqnarray*}
\Aut_\F(P)&=&\Aut_{C_\F(T)}(P)N_{\Aut_\F(P)}(\Aut_{C_S(T)}(P))\\
&=&O^p(\Aut_{C_\F(T)}(P))N_{\Aut_\F(P)}(\Aut_{C_S(T)}(P)).
\end{eqnarray*} 
Clearly $O^p(\Aut_{C_\F(T)}(P))\leq \Aut_{C_\F(\m{E})}(P)$ normalizes $\Aut_{C_\F(\m{E})}(P)$. By the extension axiom, every element of $N_{\Aut_\F(P)}(\Aut_{C_S(T)}(P))$ extends to an element of $\Aut_\F(PC_S(T))$, which then acts on $R$ as $R\leq C_S(T)$ is strongly closed. It follows that every element of $N_{\Aut_\F(P)}(\Aut_{C_S(T)}(P))$ normalizes $\Aut_R(P)$. As observed before, $\Aut_{C_\F(T)}(P)$ is normal in $\Aut_\F(P)$, which implies that $O^p(\Aut_{C_\F(T)}(P))$ is also normal in $\Aut_\F(P)$. As $\Aut_{C_\F(\m{E})}(P)=\Aut_R(P)O^p(\Aut_{C_\F(T)}(P))$, this yields (\ref{star}) and completes thus the proof of the assertion. 
\end{proof}

\begin{theorem}\label{CFENormal}
The subsystem $C_\F(\m{E})$ is normal in $\F$. 
\end{theorem}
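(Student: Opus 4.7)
The plan is to strengthen Lemma~\ref{ShowWeaklyNormal} by verifying the one remaining axiom for normality, the extension property. Setting $R:=C_S(\E)$, I need to show that every $\alpha\in\Aut_{C_\F(\E)}(R)$ extends to some $\hat{\alpha}\in\Aut_\F(RC_S(R))$ with $[C_S(R),\hat{\alpha}]\leq Z(R)$. Since $R\unlhd S$ by Theorem~\ref{MainCSE}(a), the subgroup $R$ is fully $\F$-normalized, and the argument of Lemma~\ref{ShowWeaklyNormal} applied at $P=R$ produces the factorization
\[\Aut_{C_\F(\E)}(R)=\Aut_R(R)\cdot O^p(\Aut_{C_\F(T)}(R)).\]
Inner automorphisms $c_r$ with $r\in R$ extend tautologically to $\Aut_S(RC_S(R))$ and centralize $C_S(R)$ pointwise, so it suffices to treat $\alpha\in O^p(\Aut_{C_\F(T)}(R))$.

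For such $\alpha$ I would invoke the normality of $C_\F(\E)$ inside $C_\F(T)$ --- recorded before Lemma~\ref{ShowWeaklyNormal} as a consequence of the $\F_R$-construction together with Proposition~\ref{FocProp} --- to obtain $\bar{\alpha}\in\Aut_{C_\F(T)}(RC_S(RT))$ extending $\alpha$ with $[C_S(RT),\bar{\alpha}]\leq Z(R)$ (noting $C_{C_S(T)}(R)=C_S(RT)$). As $\bar{\alpha}$ is a morphism in $C_\F(T)$, it lifts by the very definition of $C_\F(T)$ to some $\tilde{\alpha}\in\Aut_\F(RTC_S(RT))$ with $\tilde{\alpha}|_T=\id_T$. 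I then apply the extension axiom of $\F$ to $\tilde{\alpha}$: since $RT$ is normal in $S$, it is fully $\F$-normalized, so $\tilde{\alpha}$ extends over $N_{\tilde{\alpha}}$. A direct compatibility check --- for each $s\in C_S(R)$ take the transport $t=s$ and use the triviality of $c_s$ on $R$ and of $\tilde{\alpha}$ on $T$ --- shows $C_S(R)\subseteq N_{\tilde{\alpha}}$, producing a candidate $\hat{\alpha}\in\Aut_\F(RC_S(R))$.

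The main obstacle will be the commutator bound $[C_S(R),\hat{\alpha}]\leq Z(R)$. For $s\in C_S(R)$, the extension axiom pins down $\hat{\alpha}(s)$ through the relation $c_{\hat{\alpha}(s)}\circ\tilde{\alpha}=\tilde{\alpha}\circ c_s$ on $RTC_S(RT)$; reading this on $R$ yields $\hat{\alpha}(s)\in C_S(R)$, and reading it on $T$ yields $s^{-1}\hat{\alpha}(s)\in C_S(T)$, so a priori the twist $f(s):=s^{-1}\hat{\alpha}(s)$ lies only in $C_S(RT)$. To place $f(s)$ inside $Z(R)=R\cap C_S(R)$, I would identify $\hat{\alpha}|_{C_S(RT)}$ with $\bar{\alpha}|_{C_S(RT)}$ by uniqueness of $\F$-extensions, and then exploit the crossed-homomorphism structure of $f$ combined with $[C_S(RT),\bar{\alpha}]\leq Z(R)$. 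Should this direct approach fall short, the backup is to pass to the constrained-model picture $(G,N)$ from Theorem~\ref{MainCSE}(b) and translate the question into a statement about lifts inside the finite group $G$; there the normality $N\unlhd G$, together with the centricity properties of $T$ in $N$ provided by Theorem~\ref{Model1}(b), should deliver the required congruence modulo $Z(R)$.
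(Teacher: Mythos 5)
Your skeleton is close to the paper's: reduce to a $p'$-element $\alpha$ of $O^p(\Aut_{C_\F(T)}(R))$, extend it over $T$ trivially via the definition of $C_\F(T)$ to get $\tilde{\alpha}\in\Aut_\F(V)$ with $V:=(RT)C_S(RT)$, and then extend further to $RC_S(R)$. But there are two genuine gaps, and the second is the crux of the theorem. First, the "direct compatibility check" for $C_S(R)\subseteq N_{\tilde{\alpha}}$ fails as stated: taking the transport $t=s$ amounts to claiming that $c_s$ and $\tilde{\alpha}$ commute on all of $V$, but they only visibly commute on $RT$; on the factor $C_S(RT)$ neither map is trivial. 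This is repairable -- the discrepancy $c_s^{-1}\circ\tilde{\alpha}^{-1}\circ c_s\circ\tilde{\alpha}$ is trivial on $RT$ and moves $C_S(RT)$ only by elements of $Z(R)$, hence lies in $C_{\Aut_\F(V)}(V/Z(R))\cap C_{\Aut_\F(V)}(Z(R))\leq O_p(\Aut_\F(V))\leq \Aut_S(V)$ by \cite[Lemma~A.2]{Aschbacher/Kessar/Oliver:2011} and full normalization of $V$ -- but that is an argument you must supply, not a triviality.

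Second, and more seriously, the commutator bound is left open. Your manipulations yield only $[C_S(R),\hat{\alpha}]\leq C_S(RT)$ together with $[C_S(RT),\hat{\alpha}]=[C_S(RT),\bar{\alpha}]\leq Z(R)$; the cocycle identity $f(st)=f(s)^tf(t)$ does not by itself push $f$ into $Z(R)$, since $C_S(R)$ is in general much larger than $C_S(RT)$. The missing ingredient is coprimality: after replacing $\hat{\alpha}$ by a suitable power so that it is a $p'$-element still restricting to $\alpha$ on $R$ (and noting that $C_S(R)$, $C_S(RT)$ and $Z(R)$ are all $\hat{\alpha}$-invariant), one gets $[C_S(R),\hat{\alpha}]=[C_S(R),\hat{\alpha},\hat{\alpha}]\leq [C_S(RT),\hat{\alpha}]\leq Z(R)$. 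With that line your route closes and is genuinely different from the paper's, which instead fixes a model $G$ for $N_\F(V)$ and proves the group-theoretic bound $[C_S(R),O^p(C_G(T))]\leq R$ by invoking Proposition~\ref{FocProp} ($\foc(C_\F(T))\leq R$) twice -- once to identify $C_G(RT)=C_S(RT)$ and once to bound $[C_S(RT),O^p(C_G(T))]$ -- and then realizes $\tilde{\alpha}$ as conjugation by a $p'$-element of $C_G(T)$. Your proposed backup via the pair $(G,N)$ of Theorem~\ref{MainCSE}(b) uses the wrong constrained system (a model of $N_{N_\F(T)}(TC_S(T))$ rather than of $N_\F(V)$) and is not carried out. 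As written, the proposal correctly isolates the obstacle but does not overcome it.
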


\begin{proof}
Set $R:=C_S(\m{E})$ and $V:=(RT)C_S(RT)$. Note that $V\unlhd S$, and thus $N_\F(V)$ is a saturated subsystem of $\F$ over $S$. As $C_S(V)\leq V$, $N_\F(V)$ is constrained. So by Theorem~\ref{Model1}(a),(b), we may fix a model $G$ for $N_\F(V)$, and $V$ will then be a normal subgroup of $G$ with $C_G(V)\leq V$. Note that $T$ and $R$ are both normal in $G$, as they are both contained in $V$ and strongly closed in $\F$.

\smallskip

\emph{Step~1:} We show that $[C_S(R),O^p(C_G(T))]\leq R$. For the proof we will use several times that, by Proposition~\ref{FocProp}, $\foc(C_\F(T))\leq R$. Note that $V$ is normal in $G$ and $[V,C_G(RT)]=[C_S(RT),C_G(RT)]\leq [C_V(T),C_G(T)]\leq \foc(C_\F(T))\leq R$. So $[V,O^p(C_G(RT))]=[V,O^p(C_G(RT)),O^p(C_G(RT))]\leq [R,O^p(C_G(RT))]=1$. As $C_G(V)\leq V$, this implies $O^p(C_G(RT))=1$. Thus, $C_G(RT)$ is a normal $p$-subgroup, which yields $C_G(RT)=C_S(RT)$. So $[C_S(R),C_G(T)]\leq C_G(RT)=C_S(RT)\leq C_S(R)$. We conclude that $C_G(T)$ acts on $C_S(R)$ and 
\begin{eqnarray*}
[C_S(R),O^p(C_G(T))]&=&[C_S(R),O^p(C_G(T)),O^p(C_G(T))]\\
&\leq& [C_S(RT),O^p(C_G(T))]\leq \foc(C_\F(T))\leq R. 
\end{eqnarray*}
This completes Step~1. 

\smallskip

\emph{Step~2:} We show now that the assertion holds. Recall first that, by Lemma~\ref{ShowWeaklyNormal}, $C_\F(\m{E})$ is weakly normal in $\F$. Let  $\alpha\in\Aut_{C_\F(\m{E})}(R)$. It remains to prove that $\alpha$ extends to $\hat{\alpha}\in\Aut_\F(RC_S(R))$ with $[C_S(R),\hat{\alpha}]\leq R$. If $\alpha=c_s|_R$ for some $s\in R$, then this condition is clearly fulfilled with $\hat{\alpha}=c_s|_{RC_S(R)}$. By definition of $C_\F(\m{E})$ at the beginning of this section, we have $\Aut_{C_\F(\m{E})}(R)=\Inn(R)O^p(\Aut_{C_\F(T)}(R))$. So we may assume that $\alpha\in\Aut_{C_\F(T)}(R)$ is a $p^\prime$-element. By definition of $C_\F(T)$, $\alpha$ extends to $\widetilde{\alpha}\in\Aut_\F(RT)$ with $\widetilde{\alpha}|_T=\id_T$. Note that $\widetilde{\alpha}$ extends by the extension axiom to an automorphism of $V$ and is thus a morphism in $N_\F(V)$. Therefore, as $G$ is a model for $N_\F(V)$, we have $\widetilde{\alpha}=c_g|_{RT}$ for some $g\in C_G(T)$. We may choose $g$ to be a $p^\prime$-element. By Step~1, $[C_S(R),g]\leq [C_S(R),O^p(C_G(T))]\leq R$. Thus, $\alpha=c_g|_R$ extends to $\hat{\alpha}:=c_g|_{RC_S(R)}\in\Aut_\F(RC_S(R))$ with $[C_S(R),\hat{\alpha}]\leq R$. This proves the assertion.                                                                                                                                                                                                                                                                                                                                                                                                                                                                                                                                                                                                                                                                                                                                                                                                                                                                                                                                                                                                                                                                                                                               
\end{proof}

\begin{proof}[Proof of Theorem~\ref{MainCFE}]
 By Theorem~\ref{CFENormal}, $C_\F(\E)$ is normal in $\F$. Let $\D$ be a saturated subsystem of $\F$ over a subgroup $R\leq S$. 

\smallskip

If $\E$ and $\D$ centralized each other, then $\E\subseteq C_\F(R)$ and thus $R\leq C_S(\E)$. Moreover, $\D\subseteq C_\F(T)$. Since $\D$ is saturated, it follows from Alperin's Fusion Theorem and the Sylow axiom that $\D=\<O^p(\Aut_\D(P))\colon P\leq R\>_R$ is contained in $C_\F(\E)$.  

\smallskip

Assume now $\D\subseteq C_\F(\E)$. Then in particular $R\leq C_S(\E)$ and thus $\E\subseteq C_\F(R)$ by Theorem~\ref{MainCSE}(a). Moreover, $\D\subseteq C_\F(\E)\subseteq C_\F(T)$. Hence, $\E$ and $\D$ centralize each other.
\end{proof}

\begin{prop}\label{Coincide}
The subsystem $C_\F(\E)$ we defined coincides with the centralizer in $\F$ of $\E$ constructed by Aschbacher \cite[Chapter~6]{Aschbacher:2011}.
\end{prop}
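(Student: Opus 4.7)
The plan is to prove Proposition~\ref{Coincide} by establishing both inclusions between the Aschbacher centralizer, call it $C_\F^A(\E)$, and the subsystem $C_\F(\E)$ defined earlier in this section. The key structural fact is that Aschbacher's construction in \cite[Chapter~6]{Aschbacher:2011} also produces a normal subsystem over the subgroup $C_S(\E)$. Hence the two subsystems have the same underlying $p$-group and the entire task reduces to comparing their morphism sets.

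For the inclusion $C_\F^A(\E)\subseteq C_\F(\E)$, I would use Theorem~\ref{MainCFE}. It suffices to verify that $C_\F^A(\E)$ and $\E$ centralize each other in the sense of Section~\ref{S:CentralProductBasic}, i.e.\ that $C_\F^A(\E)\subseteq C_\F(T)$ and $\E\subseteq C_\F(C_S(\E))$. The second inclusion is exactly Theorem~\ref{MainCSE}(a). The first follows from the way Aschbacher builds his centralizer via a constricted $\F$-invariant map: every morphism of $C_\F^A(\E)$ is by construction induced by an $\F$-morphism that restricts to the identity on the relevant part of $T$, so each automorphism in $C_\F^A(\E)$ on a subgroup $P\leq C_S(\E)$ extends to an $\F$-automorphism of $PT$ fixing $T$ pointwise, which is precisely the defining property of $C_\F(T)$. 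Granted this, Theorem~\ref{MainCFE} gives $C_\F^A(\E)\subseteq C_\F(\E)$.

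For the reverse inclusion $C_\F(\E)\subseteq C_\F^A(\E)$, I would invoke the uniqueness statement in Aschbacher's treatment of the centralizer \cite[(6.7)]{Aschbacher:2011}, which characterizes $C_\F^A(\E)$ as the largest (uniquely determined) normal subsystem of $\F$ over $C_S(\E)$ that centralizes $\E$. Our $C_\F(\E)$ is normal in $\F$ by Theorem~\ref{CFENormal}, is over $C_S(\E)$, and centralizes $\E$ by Theorem~\ref{MainCFE} (applied to $\D=C_\F(\E)$). Aschbacher's maximality statement then forces $C_\F(\E)\subseteq C_\F^A(\E)$.

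The main obstacle is the dictionary between the two notions of centralization: Aschbacher works with constricted $\F$-invariant maps and the associated characterization of his centralizer, while our definition is the direct subsystem-theoretic one from Section~\ref{S:CentralProductBasic}. The bridge is the observation above that a saturated subsystem $\D$ over $R\leq C_S(T)$ is contained in $C_\F(T)$ precisely when every automorphism of $\D$ lifts to an $\F$-automorphism fixing $T$ pointwise; combined with Theorem~\ref{MainCSE}(a), this lets one translate Aschbacher's centralization condition into our $\D\subseteq C_\F(T)$, $\E\subseteq C_\F(R)$. Once this translation is explicit, the two maximality/universality characterizations of the centralizer match, and Proposition~\ref{Coincide} follows by a routine cross-application of Theorem~\ref{MainCFE} and \cite[(6.7)]{Aschbacher:2011}.
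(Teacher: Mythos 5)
Your first inclusion is essentially the paper's argument: one shows that Aschbacher's subsystem $\hat{C}_\F(\E)$ and $\E$ centralize each other in the sense of Section~\ref{S:CentralProductBasic} and then applies Theorem~\ref{MainCFE}. The paper makes the step $\hat{C}_\F(\E)\subseteq C_\F(T)$ precise not by unwinding the constricted-map construction, as you suggest, but by quoting Aschbacher's Theorem~4, which gives $\Aut_{\hat{C}_\F(\E)}(P)=O^p(\Aut_{C_\F(T)}(P))\Aut_{C_S(\E)}(P)$ for $P\in\hat{C}_\F(\E)^c\cap\hat{C}_\F(\E)^f$; since these generating automorphism groups visibly lie in $C_\F(T)$ and $\hat{C}_\F(\E)$ is saturated, Alperin's fusion theorem gives the containment. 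Your version of this step would need the same kind of concrete input; as written it is plausible but not yet a proof.

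The reverse inclusion is where there is a genuine gap. You invoke a ``uniqueness/maximality statement in \cite[(6.7)]{Aschbacher:2011}'' characterizing Aschbacher's centralizer as the largest normal subsystem of $\F$ over $C_S(\E)$ centralizing $\E$. But (6.7) is a statement about the \emph{subgroup} $C_S(\E)$ (it is the largest member of the set $\X$ of subgroups $X$ with $\E\subseteq C_\F(X)$); it does not provide a maximality characterization of the \emph{subsystem} $C_\F(\E)$, and Aschbacher's construction via a constricted $\F$-invariant map does not come packaged with such a universal property. Indeed, if that characterization were available, the entire proposition would be a two-line corollary of Theorems~\ref{CFENormal} and~\ref{MainCFE}, which is precisely what the paper cannot do. Instead, the paper closes the loop by a direct comparison of automorphism groups: for $P\in C_\F(\E)^f\cap C_\F(\E)^c$ one computes, as in the proof of Lemma~\ref{ShowWeaklyNormal}, that $\Aut_{C_\F(\E)}(P)=\Aut_{C_S(\E)}(P)O^p(\Aut_{C_\F(T)}(P))$, which by Aschbacher's Theorem~4 equals $\Aut_{\hat{C}_\F(\E)}(P)$; since the first inclusion guarantees that such $P$ are also fully normalized and centric in $\hat{C}_\F(\E)$, Alperin's fusion theorem yields $C_\F(\E)\subseteq\hat{C}_\F(\E)$. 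To repair your argument you would either have to prove the maximality property of Aschbacher's centralizer yourself (which is not easier than the proposition) or switch to this automorphism-group comparison.
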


\begin{proof}
The subgroup $C_S(\E)$ in our definition coincides with the one defined by Aschbacher, since it is in either case the largest subgroup $S_0$ of $S$ with $\E\subseteq C_\F(S_0)$. Write $\hat{C}_\F(\E)$ for the subsystem of $\F$ over $C_S(\E)$ which Aschbacher \cite[Chapter~6]{Aschbacher:2011} calls $C_\F(\E)$. Then by \cite[Theorem~4]{Aschbacher:2011}, $\Aut_{\hat{C}_\F(\E)}(P)=O^p(\Aut_{C_\F(T)}(P))\Aut_{C_S(\E)}(P)$ for every $P\in \hat{C}_\F(\E)^c\cap \hat{C}_\F(\E)^f$. Moreover, $\hat{C}_\F(\E)$ is normal in $\F$ and so in particular saturated. Hence, by Alperin's fusion theorem, $\hat{C}_\F(\E)\subseteq C_\F(T)$. Moreover, $\E\subseteq C_\F(C_S(\E))$, i.e. $\E$ and $\hat{C}_\F(\E)$ centralize each other. Thus, by Theorem~\ref{MainCFE}, we have $\hat{C}_\F(\E)\subseteq C_\F(\E)$. In particular, if $P\in C_\F(\E)^f\cap C_\F(\E)^c$, then $P\in \hat{C}_\F(\E)^f\cap \hat{C}_\F(\E)^c$. Arguing similarly as in the proof of Lemma~\ref{ShowWeaklyNormal}, it follows that $\Aut_{C_\F(\E)}(P)=\Aut_{C_S(\E)}(P)O^p(\Aut_{C_\F(T)}(P))$ for every $P\in C_\F(\E)^f$. Hence, for every $P\in C_\F(\E)^f\cap C_\F(\E)^c$, we have $\Aut_{C_\F(\E)}(P)=\Aut_{\hat{C}_\F(\E)}(P)$. Alperin's fusion theorem yields now $C_\F(\E)\subseteq \hat{C}_\F(\E)$ and thus $C_\F(\E)=\hat{C}_\F(\E)$.
\end{proof}

\section{Central products of normal subsystems}

\textbf{Throughout this section let $\F$ be a saturated fusion system over $S$, and let $\F_i$ be a normal subsystem of $\F$ over $S_i\leq S$ for $i=1,2$. Suppose furthermore $[S_1,S_2]=1$ and set $T=S_1S_2$.}

\smallskip

In this section we will prove Theorem~\ref{MainCentralProduct}. We will moreover show that $\F_1$ and $\F_2$ centralize each other if and only if $S_1\cap S_2\leq Z(\F_i)$ for $i=1,2$; this is particularly interesting when comparing Theorem~\ref{MainCentralProduct} to \cite[Theorem~3]{Aschbacher:2011}.

\smallskip
 
We will use throughout that $T$ is strongly $\F$-closed, since the product of two strongly $\F$-closed subgroups is always strongly $\F$-closed. This was first proved by Aschbacher \cite[Chapter 4]{Aschbacher:2011}; an alternative proof using factor systems was given by Craven \cite{Craven:2010}.

\smallskip

Crucial in the proof of Theorem~\ref{MainCentralProduct} is the following lemma.

\begin{lemma}\label{RadicalIntersect}
 Let $R\in \F^{cr}$. Then $R\cap T=(R\cap S_1)(R\cap S_2)$.
\end{lemma}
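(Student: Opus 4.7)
The inclusion $(R\cap S_1)(R\cap S_2)\leq R\cap T$ is immediate from $S_i\leq T$. For the reverse inclusion, set $Q_i:=R\cap S_i$ for $i=1,2$, $V:=Q_1Q_2$, and $Q:=R\cap T$. Since each of $T$, $S_1$, $S_2$ is strongly $\F$-closed (using the normality of $\F_1$ and $\F_2$, together with the section's opening observation that a product of two strongly closed subgroups is strongly closed), the subgroups $Q$, $Q_1$, $Q_2$, and $V$ are all $\Aut_\F(R)$-invariant, and in particular normal in $R$. Note also $[Q_1,Q_2]=1$.

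The strategy is to argue by contradiction, supposing $V<Q$, and to extract a contradiction from the centric radical hypothesis on $R$. I plan to introduce the subgroup
\[L:=C_{\Aut_\F(R)}(V)\cap C_{\Aut_\F(R)}(R/V).\]
By the $\Aut_\F(R)$-invariance of $V$, the subgroup $L$ is normal in $\Aut_\F(R)$; and by \cite[Lemma~A.2]{Aschbacher/Kessar/Oliver:2011}, $L$ is a $p$-group. Since $R\in\F^{cr}$, it follows that $L\leq O_p(\Aut_\F(R))=\Inn(R)$. In particular, every element of $L$ is of the form $c_y|_R$ for some $y\in R$ with $[R,y]\leq V$ and $[V,y]=1$.

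Next the plan is to use normality of $\F_1$ and $\F_2$ to produce, for any hypothetical $x\in Q\setminus V$ with decomposition $x=s_1s_2$ ($s_i\in S_i$), an $\F$-automorphism of $R$ that lies in $L$ but whose action on $R/V$ coincides with that of $c_x$ --- and then to contradict $L\leq\Inn(R)$. The construction starts from the observation that $c_{s_i}|_{S_i}\in\Aut_{\F_i}(S_i)$; by the extension axiom for normal subsystems, this lifts to some $\hat{c}_i\in\Aut_\F(S_iC_S(S_i))$ acting trivially on $C_S(S_i)$. Since $[S_1,S_2]=1$ gives $S_{3-i}\leq C_S(S_i)$, the extension $\hat{c}_i$ is trivial on $S_{3-i}$, so the product $\hat{c}_1\hat{c}_2$ restricts to an $\F$-automorphism of $T$ inducing $c_x$ modulo $V$. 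The aim is then to propagate this automorphism to $R$ so that it lies in $L$ but acts nontrivially on $x$ modulo $V$.

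The main obstacle will be promoting the automorphism of $T$ just constructed to an $\F$-automorphism of $R$ itself, verifying that it lies in $L$, and ruling out that it is realized by an element of $R$ through $\Inn(R)$. This is expected to require replacing $R$ by an $\F$-conjugate in which $QC_S(Q)$ is fully normalized so the extension axiom applies to $R$-automorphisms, and an Alperin fusion style argument (using that $\F^{cr}\cap\F^f$ is a conjugation family) to bring the $\hat c_i$ from $T$ back to $R$. If this can be done, then non-triviality of $x$ modulo $V$ translates into an element of $L$ that cannot be inner on $R$, contradicting $L\leq\Inn(R)$ and forcing $Q=V$.
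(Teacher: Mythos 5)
Your proposal does not close, and the gaps are structural rather than cosmetic. First, the logic of the intended contradiction is backwards: once you know $L\leq O_p(\Aut_\F(R))=\Inn(R)$, the useful conclusion is that any automorphism you manage to place in $L$ \emph{is} inner, and then centricity ($C_S(R)\leq R$) forces the conjugating element into $R$; you cannot hope to exhibit an element of $L$ ``that cannot be inner on $R$''. Second, the automorphism you construct carries no information. For $\beta=c_{s_i}|_{S_i}$ the extension you describe is just $c_{s_i}$ itself (elements of $C_S(S_i)$ centralize $s_i$), so $\hat{c}_1\hat{c}_2|_T=c_x|_T$ with $x\in R\cap T\leq R$ --- an automorphism already realized by an element of $R$ wherever it is defined. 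The genuine difficulty, which your write-up never engages with, is that the \emph{individual components} $s_1,s_2$ of $x$ need not normalize $R$, so $c_{s_i}|_R$ is not even defined a priori; no Alperin-type conjugation of $R$ fixes this. Third, your $L$ is the wrong normal $p$-subgroup: the automorphisms one must capture do not centralize $V$; rather they centralize $R/(R\cap S_i)$ and act on $R\cap S_i$ as restrictions of \emph{inner} automorphisms of $R$ (because the complementary component lies in $S_j$ and centralizes $S_i$).

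The paper's argument repairs all three points at once. Fix $i$, set $j=3-i$ and let $Q_i$ be the set of all $x_i\in S_i$ such that $x_ix_j\in R\cap T$ for some $x_j\in S_j$; it suffices to show $Q_i\leq R$. For $x_i\in N_{Q_i}(R)$ one has $[R,x_i]\leq R\cap S_i$ (as $S_i\unlhd S$) and $c_{x_i}|_{R\cap S_i}=c_{x_ix_j}|_{R\cap S_i}\in\Aut_R(R\cap S_i)$ since $x_j$ centralizes $S_i$ and $x_ix_j\in R$. Hence $\Aut_{Q_i}(R)$ lies in $X=\{\phi\in\Aut_\F(R)\colon [R,\phi]\leq R\cap S_i,\ \phi|_{R\cap S_i}\in\Aut_R(R\cap S_i)\}$, which is a normal $p$-subgroup of $\Aut_\F(R)$ by \cite[Lemma~A.2]{Aschbacher/Kessar/Oliver:2011} and strong closure of $S_i$. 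Radicality gives $X\leq\Inn(R)$, centricity gives $N_{Q_i}(R)\leq R$, and then the normalizer argument in the $p$-group $Q_iR$ (note $R$ normalizes $Q_i$) yields $Q_i\leq R$. If you want to salvage your approach, you must replace $L$ by something like this $X$, work with the set of all components rather than one chosen decomposition of one element, and finish with the $N_{Q_iR}(R)=R$ step rather than a contradiction about non-inner automorphisms.
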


\begin{proof}
Clearly $(R\cap S_1)(R\cap S_2)\leq R\cap T$. Set $Q:=R\cap T$ and fix $i\in\{1,2\}$. Set $j=3-i$ and 
\[Q_i:=\{x_i\in S_i\colon \exists x_j\in S_j\mbox{ such that }x_ix_j\in Q\}.\]
It suffices to show that $Q_i\leq R$. Note that $[R,N_{Q_i}(R)]\leq R\cap S_i$. Moreover, for $x_i\in N_{Q_i}(R)$, by definition of $Q_i$, there exists $x_j\in S_j$ such that $y:=x_ix_j\in Q\leq R$. Then $c_{x_i}|_{R\cap S_i}=c_y|_{R\cap S_i}\in \Aut_R(R\cap S_i)$. So $\Aut_{Q_i}(R)$ is contained in 
\[X:=\{\phi\in\Aut_\F(R)\colon [R,\phi]\leq R\cap S_i\mbox{ and }\phi|_{R\cap S_i}\in\Aut_R(R\cap S_i)\}.\]
By \cite[Lemma~A.2]{Aschbacher/Kessar/Oliver:2011}, $X$ is a $p$-group. Moreover, $X$ is normal in $\Aut_\F(R)$, since $S_i$ is strongly closed. Hence, as $R$ is centric radical, it follows $\Aut_{Q_i}(R)\leq X\leq \Inn(R)$ and $N_{Q_i}(R)\leq R$. Observe that $R$ normalizes $Q_i$, since $R$ normalizes $Q$, $S_i$ and $S_j$. Hence, $RQ_i$ is $p$-group. As $R=N_{Q_i}(R)R=N_{Q_iR}(R)$, it follows thus $R=Q_iR$ and $Q_i\leq R$. This shows the assertion. 
\end{proof}

\begin{proof}[Proof of Theorem~\ref{MainCentralProduct}]
Suppose $\F_1$ and $\F_2$ centralize each other and set $\D:=\F_1*\F_2$. Recall that, by definition of $\F_1*\F_2$,
\[\D=\<\psi\in\Hom_\F(P_1P_2,T)\colon P_i\leq S_i\mbox{ and }\psi|_{P_i}\in\Hom_{\F_i}(P_i,S_i)\mbox{ for }i=1,2\>_T.\]
It follows from Proposition~\ref{P:F1F2Centralize} that $\D$ is the central product of $\F_1$ and $\F_2$, and thus in particular a saturated subsystem of $\F$. It remains to show that $\D$ is $\F$-invariant, and that the extension property for normal subsystems holds. 

\smallskip

\emph{Step~1:} We show that $\D$ is $\F$-invariant. As remarked above, $T$ is strongly closed. Let $\alpha\in\Aut_\F(T)$ and $P_i\leq S_i$ for $i=1,2$. As $S_i$ is strongly closed, we have $P_i^\alpha\leq S_i$ for each $i$. Moreover, if $\phi\in\Hom_\F(P_1P_2,T)$ with $\phi|_{P_i}\in\Hom_{\F_i}(P_i,S_i)$ for $i=1,2$, then $\phi^\alpha\in\Hom_\F(P_1^\alpha P_2^\alpha,T)$ and $(\phi^\alpha)|_{P_i^\alpha}=(\phi|_{P_i})^\alpha\in\Hom_{\F_i}(P_i^\alpha,S_i)$ as $\F_i$ is normal in $\F$. Thus, it follows from the construction of $\D=\F_1*\F_2$ that $\phi^\alpha$ is a morphism in $\D$. Moreover, we can conclude that  $\D^\alpha=\D$. Using the characterization of $\F$-invariant subsystems given in Proposition~\ref{Finvariant}(d), it remains now only to show that $\Aut_\D(R\cap T)\unlhd \Aut_\F(R\cap T)$ for every $R\in\F^{cr}$. Fix $R\in\F^{cr}$ and set $P:=R\cap T$. By Lemma~\ref{RadicalIntersect}, we have $P=P_1P_2$ where $P_i=R\cap S_i$ for $i=1,2$. By the construction of $\D$, we have therefore $\Aut_\D(P)=\{\phi\in\Aut_\F(P)\colon \phi|_{P_i}\in\Aut_{\F_i}(P_i)\mbox{ for }i=1,2\}$. As $\F_i$ is normal in $\F$ for $i=1,2$, it follows now easily that $\Aut_\D(P)$ is normal in $\Aut_\F(P)$ as required.

\smallskip

\emph{Step~2:} We show that the extension property holds for $\D$; i.e. fixing $\alpha\in\Aut_\D(T)$, we prove that $\alpha$ extends to $\hat{\alpha}\in\Aut_\F(TC_S(T))$ such that $[C_S(T),\hat{\alpha}]\leq T$. By definition of $\D$, we have $\alpha_i:=\alpha|_{S_i}\in\Aut_{\F_i}(S_i)$ for $i=1,2$. Now for each $i=1,2$, we can pick an extension $\hat{\alpha}_i\in\Aut_\F(S_iC_S(S_i))$ of $\alpha_i$ such that $[C_S(S_i),\hat{\alpha}_i]\leq S_i$ and $\hat{\alpha}_i|_{S_{3-i}}=\id_{S_{3-i}}$; this can be concluded from Lemma~\ref{CFCG0} or from Lemma~\ref{L:F1F2Centralize} and Lemma~\ref{ZCentralize}(a) below. Note that $TC_S(T)$ is weakly closed and contained in $S_iC_S(S_i)$ for $i=1,2$. Hence, $\hat{\beta}=(\hat{\alpha}_1|_{TC_S(T)})\circ (\hat{\alpha}_2|_{TC_S(T)})\in\Aut_\F(TC_S(T))$ is well-defined. Observe that $\hat{\beta}|_{S_i}=\hat{\alpha}_i|_{S_i}=\alpha_i=\alpha|_{S_i}$ for $i=1,2$. Hence, $\hat{\beta}|_T=\alpha$. Moreover, $[C_S(T),\hat{\beta}]\leq [C_S(T),\hat{\alpha}_1][C_S(T),\hat{\alpha}_2]\leq [C_S(S_1),\hat{\alpha}_1][C_S(S_2),\hat{\alpha}_2]\leq S_1S_2=T$. So Step~2 is complete. We conclude that $\D$ is normal in $\F$.
\end{proof}

If $\F_1$ and $\F_2$ centralize each other, then Theorem~\ref{MainCentralProduct} says basically that there is an explicitly constructed normal subsystem of $\F$ which is a central product of $\F_1$ and $\F_2$. Apart from the explicit construction, Aschbacher \cite[Theorem~3]{Aschbacher:2011} proves a similar result under the assumption that $S_1\cap S_2\leq Z(\F_i)$ for $i=1,2$. We will show that this assumption is actually equivalent to $\F_1$ and $\F_2$ centralizing each other. This is a  consequence of the next lemma; the reader might want to note that part (a) of this lemma was also cited in the proof of Theorem~\ref{MainCentralProduct} as an alternative to using Lemma~\ref{CFCG0}.

\begin{lemma}\label{ZCentralize}
Let $i\in\{1,2\}$ and set $j=3-i$. Suppose $S_1\cap S_2\leq Z(\F_i)$.
\begin{itemize}
 \item [(a)] Every automorphism $\beta\in\Aut_{\F_i}(S_i)$ extends to an automorphism $\hat{\beta}\in\Aut_\F(S_iC_S(S_i))$ with $[C_S(S_i),\hat{\beta}]\leq S_i$ and $\hat{\beta}|_{S_j}=\id_{S_j}$.
 \item [(b)] We have $\F_i\subseteq C_\F(S_j)$.
\end{itemize}
\end{lemma}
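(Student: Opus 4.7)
The plan is to prove both parts using Lemma~\ref{PropHelp} applied to $\E=\F_i$, combined with a group-theoretic $p$/$p^\prime$-dichotomy inside a suitable constrained fusion system. Part (a) is obtained from the choice $X=S_i$, while part (b) follows by running the analogous argument for each $R\in \F_i^{cr}$ and invoking Alperin's fusion theorem for $\F_i$.

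\smallskip

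For (a), set $V:=S_iC_S(S_i)$. Lemma~\ref{PropHelp} (with $X=T=S_i$ and $\E=\F_i$) tells us that $\G:=N_{N_\F(S_i)}(V)$ is a saturated constrained subsystem of $\F$ in which $\F_i$ is normal. Pick a model $G$ for $\G$ and the normal subgroup $N_i\trianglelefteq G$ which is a model for $\F_i$, furnished by Theorem~\ref{Model1}(c). Writing $\beta=c_n|_{S_i}$ for some $n\in N_{N_i}(S_i)$, we set $\hat\beta:=c_n|_V\in \Aut_\F(V)$.

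\smallskip

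The heart of the argument is to prove $[N_i,S_j]=1$. Since $S_j$ is strongly $\F$-closed with $S_j\leq C_S(S_i)\leq V$, and since both $N_i$ and $S_j$ are normal in $G$, we have $[N_i,S_j]\leq N_i\cap S_j\leq S_i\cap S_j=S_1\cap S_2$, using $N_i\cap S=S_i$. The hypothesis $S_1\cap S_2\leq Z(\F_i)$ ensures that every element of $N_i$ centralizes $S_1\cap S_2$ pointwise, so the conjugation action $N_i\to \Aut(S_j)$ lands in
\[W:=\{\xi\in \Aut(S_j)\colon \xi|_{S_1\cap S_2}=\id,\ [S_j,\xi]\leq S_1\cap S_2\},\]
which is a $p$-group by \cite[Lemma~A.2]{Aschbacher/Kessar/Oliver:2011}. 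On the other hand, $S_i\leq C_{N_i}(S_j)$ (from $[S_i,S_j]=1$) contains a Sylow $p$-subgroup of $N_i$, so the image of $N_i$ in $\Aut(S_j)$ has $p^\prime$-order. A $p^\prime$-subgroup of a $p$-group is trivial, and $[N_i,S_j]=1$ follows.

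\smallskip

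With $[N_i,S_j]=1$ in hand, the verification of $\hat\beta$ is immediate: by construction $\hat\beta|_{S_i}=\beta$ and $\hat\beta|_{S_j}=\id_{S_j}$, and since $V,N_i\trianglelefteq G$,
\[[C_S(S_i),\hat\beta]\leq [V,N_i]\leq V\cap N_i=V\cap(N_i\cap S)=V\cap S_i=S_i.\]

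\smallskip

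For (b), since $C_\F(S_j)$ is closed under composition and restriction, Alperin's fusion theorem for $\F_i$ reduces the task to showing $\Aut_{\F_i}(R)\subseteq C_\F(S_j)$ for each $R\in \F_i^{cr}\cap \F_i^f$; by $\F$-conjugating, using Lemmas~\ref{Wellknown} and~\ref{FfEf}, we may additionally assume $R\in \F^f$. Applying Lemma~\ref{PropHelp} with $X=R$ then yields a saturated constrained subsystem $\G_R:=N_{N_\F(R)}(RC_S(R))$ in which $N_{\F_i}(R)$ is normal; pick a model $G_R$ and a normal subgroup $N_R\trianglelefteq G_R$ which is a model for $N_{\F_i}(R)$. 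Writing $\gamma=c_n|_R$ with $n\in N_R$ and repeating verbatim the $p$/$p^\prime$-argument above -- noting that the Sylow $p$-subgroup of $N_R$ is $N_{S_i}(R)\leq S_i\leq C_S(S_j)$ -- gives $[N_R,S_j]=1$, so $c_n|_{RS_j}$ is the required extension of $\gamma$ acting trivially on $S_j$.

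\smallskip

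The principal obstacle in both parts is the commutator computation $[N_i,S_j]=1$ (and its analogue $[N_R,S_j]=1$): the centrality hypothesis $S_1\cap S_2\leq Z(\F_i)$ is exactly what forces the image of the conjugation map into the $p$-group $W$, while the Sylow observation $S_i\leq C_{N_i}(S_j)$ supplies the complementary $p^\prime$-constraint.
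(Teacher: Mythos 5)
Your argument is correct, but it takes a genuinely different route from the paper. For (a), the paper stays entirely inside fusion systems: it reduces to $p'$-automorphisms via the Sylow axiom, invokes the extension property of the normal subsystem $\F_i\unlhd\F$ to get $\hat\beta$ with $[C_S(S_i),\hat\beta]\leq S_i$, and then kills $[S_j,\hat\beta]$ by the coprime-action identity $[S_j,\hat\beta]=[S_j,\hat\beta,\hat\beta]\leq[S_1\cap S_2,\hat\beta]=1$; you instead pass to a model $G$ of the constrained system $N_{N_\F(S_i)}(S_iC_S(S_i))$ and run the same coprime idea group-theoretically, trapping the image of $N_i$ in $\Aut(S_j)$ between a $p$-group (via \cite[Lemma~A.2]{Aschbacher/Kessar/Oliver:2011}) and a $p'$-group. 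For (b), the paper simply feeds the output of (a) into Proposition~\ref{WeaklyClosedCentralized} with $R=S_j$ (which is weakly closed), whereas you rerun the model argument at each $R\in\F_i^{cr}\cap\F_i^f$ and conclude by Alperin's fusion theorem; your version is heavier but bypasses the machinery of Section~\ref{S:CSE} except for Lemma~\ref{PropHelp}. One imprecision you should fix: Lemma~\ref{PropHelp} with $X=T=S_i$ does \emph{not} say that $\F_i$ is normal in $\G$ or that $N_i$ is a model for $\F_i$ (the system $\F_i$ need not be constrained, and it is not in general a subsystem of $\G$); it says that $N_{\F_i}(S_i)$ is normal in $\G$, so $N_i$ is a model for $N_{\F_i}(S_i)$. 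This does not damage your proof, since all you use is $N_i\unlhd G$, $N_i\cap S=S_i\in\Syl_p(N_i)$, $\Aut_{N_i}(S_i)=\Aut_{N_{\F_i}(S_i)}(S_i)=\Aut_{\F_i}(S_i)$, and the fact that conjugation by elements of $N_i$ realizes $\F_i$-morphisms and hence fixes $Z(\F_i)\supseteq S_1\cap S_2$ pointwise -- all of which hold for the model of $N_{\F_i}(S_i)$ -- but the statement as written is false and should be corrected.
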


\begin{proof}
For the proof of (a) let $\beta\in\Aut_{\F_i}(S_i)$; we need to show that $\beta$ extends to an automorphism $\hat{\beta}$ as in (a). If $\beta=c_s|_{S_i}$ for some $s\in S_i$, then $\hat{\beta}=c_s|_{S_iC_S(S_i)}$ is an extension of $\beta$ with $[C_S(S_i),\hat{\beta}]\leq [S,S_i]\leq S_i$ and $\hat{\beta}|_{S_j}=c_s|_{S_j}=\id_{S_j}$. So if $\beta\in\Inn(S_i)$, then there exists an extension $\hat{\beta}$ with the required properties. By the Sylow axiom, $\Inn(S_i)$ is a Sylow $p$-subgroup of $\Aut_{\F_i}(S_i)$. Hence, we may assume that $\beta$ is a $p^\prime$-automorphism. By the extension property for normal subsystems, $\beta$ extends to $\hat{\beta}\in\Aut_\F(S_iC_S(S_i))$ with $[C_S(S_i),\hat{\beta}]\leq S_i$. As $\beta$ is a $p^\prime$-automorphism, replacing $\hat{\beta}$ be a suitable power of itself, we may assume that $\hat{\beta}$ is a $p^\prime$-automorphism as well. Note that $S_j\leq C_S(S_i)$. As $S_j$ is strongly closed, we conclude $[S_j,\hat{\beta}]\leq S_i\cap S_j=S_1\cap S_2$. Since $S_1\cap S_2\leq Z(\F_i)$, we have $\hat{\beta}|_{S_1\cap S_2}=\beta|_{S_1\cap S_2}=\id_{S_1\cap S_2}$. Hence, as $\hat{\beta}$ is a $p^\prime$-automorphism, we have $[S_j,\hat{\beta}]=[S_j,\hat{\beta},\hat{\beta}]\leq [S_1\cap S_2,\hat{\beta}]=1$. This shows (a). In particular, $\Aut_{\F_i}(S_i)\subseteq C_\F(S_j)$. As $S_j$ is strongly $\F$-closed and thus weakly $\F$-closed, Proposition~\ref{WeaklyClosedCentralized} yields $\F_i\subseteq C_\F(S_j)$. So (b) holds.
\end{proof}

\begin{prop}\label{NormalCentralizeEachOther}
The normal subsystems $\F_1$ and $\F_2$ centralize each other if and only if $S_1\cap S_2\leq Z(\F_i)$ for $i=1,2$. 
\end{prop}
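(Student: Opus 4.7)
The plan is to observe that both directions follow almost immediately from lemmas already established in the paper, once we notice that normal subsystems are in particular saturated.

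For the ``if'' direction, suppose $S_1 \cap S_2 \leq Z(\F_i)$ for $i = 1,2$. Applying Lemma~\ref{ZCentralize}(b) with $i = 1$ (using the hypothesis $S_1 \cap S_2 \leq Z(\F_1)$), I obtain $\F_1 \subseteq C_\F(S_2)$. Symmetrically, applying Lemma~\ref{ZCentralize}(b) with $i = 2$ gives $\F_2 \subseteq C_\F(S_1)$. By definition, this means $\F_1$ and $\F_2$ centralize each other.

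For the ``only if'' direction, assume $\F_1$ and $\F_2$ centralize each other. Since $\F_1$ and $\F_2$ are normal subsystems of $\F$, they are in particular saturated. Hence Lemma~\ref{L:F1F2Centralize} applies: for each $i \in \{1,2\}$, the inclusion $\F_i \subseteq C_\F(S_{3-i})$ forces $S_1 \cap S_2 \leq Z(\F_i)$, which is exactly what we need.

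No step here poses a genuine obstacle; the entire content of the proposition has essentially been distilled into the preliminary lemmas on central products (Lemma~\ref{L:F1F2Centralize}) and the extension of $\F_i$-automorphisms of $S_i$ (Lemma~\ref{ZCentralize}). The only subtlety worth flagging is to record explicitly why the hypotheses of Lemma~\ref{L:F1F2Centralize} are satisfied, namely the saturation of $\F_1$ and $\F_2$ coming from their normality in $\F$.
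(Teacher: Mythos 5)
Your proof is correct and follows exactly the paper's own argument, which likewise deduces the proposition from Lemma~\ref{ZCentralize}(b) and Lemma~\ref{L:F1F2Centralize}. Your explicit remark that normality implies saturation (so that Lemma~\ref{L:F1F2Centralize} applies) is a sensible detail that the paper leaves implicit.
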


\begin{proof}
This follows from Lemma~\ref{L:F1F2Centralize} and Lemma~\ref{ZCentralize}(b).
\end{proof}

\bibliographystyle{amsplain}
\bibliography{repcoh}

\end{document}